\newtheorem{Theorem}{Theorem}[section]
\newtheorem{Lemma}{Lemma}[section]
\newtheorem{theorem}[Theorem]{Theorem}
\newtheorem{proposition}{Proposition}[section]
\newtheorem{lemma}{Lemma}[section]
\newtheorem{remark}{Remark}[section]
\newtheorem{example}{Example}[section]
\newcommand{\R}{{\mathbb R}}
\newcommand{\eps}{\varepsilon}
\newcommand{\fzero}{{{\bar f}_0}}
\newcommand{\finfty}{{{\bar f}_\infty}}
\numberwithin{equation}{section}
\begin{document}

\title[Liouville theorems and universal estimates]{Liouville theorems and
universal estimates \\
for superlinear parabolic problems \\
without scale invariance} 
\author[Pavol Quittner and Philippe Souplet]{Pavol Quittner$^{(1)}$ and Philippe Souplet$^{(2)}$}

\thanks{$^{(1)}$Department of Applied Mathematics and Statistics, Comenius University
Mlynsk\'a dolina, 84248 Bratislava, Slovakia. Email: quittner@fmph.uniba.sk} 

\thanks{$^{(2)}$Universit\'e Sorbonne Paris Nord, CNRS UMR 7539, LAGA,
93430 Villetaneuse, France. Email: souplet@math.univ-paris13.fr}

\date{}

\begin{abstract}
We establish Liouville type theorems in the whole space and in a half-space
for parabolic problems without scale invariance.
To this end, we employ two methods, respectively based on
the corresponding elliptic Liouville type theorems and energy estimates for suitably rescaled problems,
and on reduction to a scalar equation by proportionality of components.

We then give applications of known and new Liouville type theorems to universal singularity and decay estimates
for non scale invariant parabolic equations and systems involving superlinear nonlinearities with regular variation.
To this end, we adapt methods from \cite{QS24} to parabolic problems.

\vskip 0.2cm
{\bf AMS Classification:} 35K58; 35K61; 35B40; 35B45; 35B44; 35B53. 
\vskip 0.1cm

{\bf Keywords:} Nonlinear parabolic systems, Liouville-type theorem, universal bounds, decay, singularity estimates
\end{abstract}

\maketitle

\section{Introduction}
\label{intro}

The present paper is concerned with Liouville type theorems
and universal estimates of solutions of parabolic systems of the form
\begin{equation} \label{eq-fU}
 U_t-\Delta U=f(U), 
\end{equation}
where $U=(u_1,\dots,u_N)$, $N\geq1$, and $f=(f_1,\dots,f_N):\R^N\to\R^N$ is a superlinear
function which need not be homogeneous, i.e.~system \eqref{eq-fU}
need not be scale invariant.
Unless explicitly stated otherwise, by a solution we mean a classical solution.

It is well known that in the scale invariant case, Liouville theorems
guaranteeing the nonexistence of nontrivial bounded nonnegative solutions of
\eqref{eq-fU} in $\R^n\times\R$ imply universal estimates of nonnegative solutions
to \eqref{eq-fU} 
in general domains $D:=\Omega\times(0,T)$, see \cite{PQS07b,Q21}. 
The class of nonnegative solutions can be replaced by suitable subclasses of nodal solutions, 
see \cite{Q22} and the references therein. 
Recently, it was shown in
\cite{S23} that Liouville theorems imply universal estimates
also for some scalar elliptic and parabolic problems without scale invariance,
but the required parabolic Liouville theorems have not been proved
in the full subcritical range, see \cite[Section~5]{S23}.   
The main aim of the present paper is to prove
optimal Liouville theorems for parabolic equations and systems without scale invariance 
and to show that they imply universal estimates also in the case of systems.
 
Let us first discuss the model problem \eqref{eq-fU} with $N=1$ and $f(U)=U^p$, $p>1$. 
In this case, problem \eqref{eq-fU} does not possess nonnegative nontrivial
solutions in $\R^n\times\R$ if (and only if) $p<p_S$, where 
$$
p_S:=\left\{
  \begin{aligned}
    &\frac{n+2}{n-2},&\quad &\text{if $n\ge 3$,}\\
    &\infty,&\quad &\text{if $n\in\{1,2\}$,}
  \end{aligned}\right.
$$
is the critical Sobolev exponent, see \cite{Q21}.
This Liouville theorem implies that any nonnegative solution of
\eqref{eq-fU} with $f(U)=U^p$, $p\in(1,p_S)$, in $D=\Omega\times(0,T)$ 
(where $\Omega$ is an arbitrary domain in $\R^n$ and $T\in(0,\infty]$)
satisfies  the estimate
\begin{equation} \label{UBp}
 U^{p-1}(x,t)\leq C(n,p)\,\hbox{\rm dist}^{-2}((x,t),\partial D), 
\end{equation}
where $\hbox{\rm dist}((x,t),\partial D)=\inf\{|x-y|+|t-s|^{1/2}:(y,s)\in\partial D\}$,
see \cite{PQS07b}.

If  $f\in C([0,\infty))$ satisfies 
\begin{equation} \label{fup}
\lim_{U\to\infty}\frac{f(U)}{U^{p}}=c\in(0,\infty), \qquad\hbox{where }\ p\in(1,p_S), 
\end{equation}
then \cite{Q21,PQS07b} imply that
any nonnegative solution of \eqref{eq-fU} in $D$ satisfies 
\begin{equation} \label{UBfp}
 U^{p-1}(x,t)\leq C(n,f)(1+\hbox{\rm dist}^{-2}((x,t),\partial D)).
\end{equation}
Estimate \eqref{UBfp} is weaker than \eqref{UBp}: In particular, it does not yield decay estimates
(for unbounded $D$).
If $\Omega$ is regular and $U=0$ on $\partial\Omega\times(0,T)$,
then the corresponding Liouville theorem
for bounded solutions of the equation $U_t-\Delta U=U^p$ 
in the half-space $\R^n_+\times\R$ satisfying $U=0$ on $\partial\R^n_+\times\R$
guarantees that
estimate \eqref{UBfp} can be improved to the estimate
\begin{equation} \label{UBfpOmega}
  U^{p-1}(x,t)\leq C(n,f,\Omega)(1+t^{-1}+(T-t)^{-1}),
\end{equation}
see \cite[Section~6]{PQS07b} for a more general case $f=f(x,t,U,\nabla U)$.

On the other hand,
\cite[Theorem~3.1 and Remark~1(i)]{S23} imply
that if $N=1$ and we 
replace assumption \eqref{fup} by the weaker assumption
\begin{equation} \label{frv}
 \lim_{\lambda\to\infty}\frac{f(\lambda U)}{f(\lambda)}=U^p \hbox{ for each }U>0,
\qquad\hbox{where }\ p\in(1,p_S),
\end{equation}
then we still have the following modification of estimate \eqref{UBfp}
\begin{equation} \label{UBflarge}
 \frac{f(U(x,t))}{U(x,t)}\leq C(n,f)(1+\hbox{\rm dist}^{-2}((x,t),\partial D)) \quad\hbox{ whenever }\ U(x,t)\geq1
\end{equation}
(and analogous modification is true in the case of estimate \eqref{UBfpOmega}).
In particular, if 
\begin{equation} \label{flog}
f(U)=|U|^{p-1}U\log^a(2+U^2),\quad\hbox{ where }\ a\in\R, \ p\in(1,p_S), 
\end{equation}
then $f$ satisfies \eqref{frv},
and inequality \eqref{UBflarge} in the case $\Omega=\R^n$ implies optimal blow-up rate
estimate of nonnegative solutions which blow up at $t=T$, see \cite[Remark~2]{S23}.
Such blow-up rate estimates (with $C$ depending also on $U$) have been obtained
in the case of the nonlinearity \eqref{flog}
even for sign-changing solutions in \cite{HZ22}, but the method in \cite{HZ22}
is very technical and it is not clear whether it can be applied to
the whole class of functions $f$ satisfying \eqref{frv}.

In the scalar case,
Liouville theorems proved in the present paper guarantee
that for a large class of nonlinearities $f$ satisfying
\eqref{frv}, 
\begin{equation} \label{frv2}
 \lim_{\lambda\to0+}\frac{f(\lambda U)}{f(\lambda)}=U^q \hbox{ for each }U>0,
\qquad\hbox{where }\ q\in(1,p_S),
\end{equation}
and some additional assumptions,
estimate \eqref{UBflarge} can be improved to the estimate
\begin{equation} \label{UBfrv}
 \frac{f(U(x,t))}{U(x,t)}\leq C(n,f)\,\hbox{\rm dist}^{-2}((x,t),\partial D),
\end{equation}
which coincides with estimate \eqref{UBp} if $f(U)=U^p$,
and in particular provides (universal) decay in time and space.
For instance, estimate \eqref{UBfrv} is true
in the case of \eqref{flog} with $a$ satisfying \eqref{HZa}, see Example~\ref{ex-HZ}.  

One of the main motivations of this paper is to prove such estimates also in the case of parabolic systems.
Namely, for a large class of $f:\R^N\to\R^N$ with $N>1$, as a consequence of our new Liouville theorems, 
we shall derive estimates \eqref{UBflarge} or \eqref{UBfrv} with the left-hand sides replaced by
${|f(U(x,t))|}/{|U(x,t)|}$, where $|\cdot|$ denotes the Euclidean norm in $\R^N$; see Theorem~\ref{thmUB} below.

Our main Liouville theorems are Theorems~\ref{thmU} and~\ref{thmU-half}.
Theorem~\ref{thmU} assumes that $f$ is suitably superlinear, it has a gradient structure ($f=\nabla F$)
and the problem $-\Delta U=\fzero(U)$ in $\R^n$ with $\fzero(U)=\lim_{\lambda\to0+}f(\lambda U)/\max_{|U|=\lambda}|f(U)|$
does not possess nontrivial nonnegative bounded solutions. It guarantees that
if problem \eqref{eq-fU} does not possess nontrivial nonnegative bounded stationary solutions,
then it does not possess nontrivial nonnegative bounded solutions. 
Theorem~\ref{thmU-half} is an analogue of Theorem~\ref{thmU}  in the case of a half-space.
The proofs of Theorems~\ref{thmU} and~\ref{thmU-half} are based on nontrivial modifications of arguments in \cite{Q21}.
Some of the assumptions in Theorem~\ref{thmU} can be weakened 
(and the proof can be significantly simplified) 
if we restrict the growth of the nonlinearity to $p\leq p_{sg}$, 
where
$$
p_{sg}:=\left\{
  \begin{aligned}
    &\frac{n}{n-2},&\quad &\text{if $n\ge 3$,}\\
    &\infty,&\quad &\text{if $n\in\{1,2\}$,}
  \end{aligned}\right.
$$
see Theorems~\ref{thmUK} and~\ref{thmUK2}. 
Finally, Theorem~\ref{thm-proportional} is 
a modification of the Liouville theorem \cite[Theorem~3]{Q16a}.
It assumes $N=2$ and $(f_1(u_1,u_2)-f_2(u_1,u_2))(u_1-u_2)\leq0$,
and it shows equality of components, hence reducing
the problem of nonexistence for system \eqref{eq-fU}
to the scalar case.

We also provide several scalar and vector-valued examples
which demonstrate the applicability of our results,
see Section~\ref{sec-ex}.
In particular, we study the range of parameters where
the Liouville theorems are true (see Example~\ref{benchmark})
and we also show that unlike in the case of scale-invariant problems,
the Liouville property need not have an open range in problems
without scale invariance (see Example~\ref{ex-pq}). 

\begin{remark} \rm
(i) Parabolic problems with non scale invariant nonlinearities have recently gained increasing interest. 
Various questions have been studied, such as local solvability for rough initial data, 
global existence/non-existence, or small diffusion limit; see, e.g., \cite{LRS13,FI18,LS20,FI22,FHIL}.
Concerning the blow-up behavior, beside the already mentioned works \cite{HZ22,S23}, we refer to \cite{Fuj,N15,DNZ,CS} 
(and see also the classical paper \cite{FML} for some early results).
Liouville theorems for such problems have also been studied, but they either require $n=1$ (see \cite{P20}),
or the growth of the nonlinearity $p$ is restricted by the condition $p\le p_F$ or $p<p_B$, where
$p_F:=(n+2)/n$ is the Fujita exponent and $p_B:=n(n+2)/(n-1)^2$ is the exponent of Bidaut-V\'eron \cite{BV98}.
The approach in \cite{BV98} requiring $p<p_B$ is based on a modification of the ``elliptic'' arguments in \cite{GS81}
(which only require $p<p_S$ in the elliptic case) and has been
applied to problems with non scale invariant nonlinearities in \cite{S23,JWY23}, for example.

(ii) Let us consider solutions $U$ of initial value parabolic problems
in the domain $D=\Omega\times(0,T)$ with $T\leq\infty$, and assume $U=0$ on $\partial\Omega\times(0,T)$ if $\Omega\ne\R^n$. 
Then for a large class of non scale invariant subcritical nonlinearities
a less precise a priori estimate of the form $|U(x,t)|\leq C(\|U(\cdot,0)\|_\infty,\eps)$ for $t<T-\eps$
can be obtained by energy, interpolation and bootstrap arguments (see \cite{Q03,I12}, for example).
This approach can be used even if the corresponding Liouville theorem and decay of solutions fail
(for example, if one considers sign-changing non-radial solutions of \eqref{eq-fU} in $\R^n\times(0,\infty)$
with $N=1$ and $f(U)=\tilde f(U)-U$, where $\tilde f$ satisfies the superlinearity condition \eqref{F2} and  
$c|U|^{q+1}\leq \tilde f(U)U\leq C|U|^{p+1}$
with $1<q\leq p<p_S$, $p-q$ sufficiently small, see \cite[Theorem~1.2]{Q03}).  
In addition, this approach can sometimes be used also for the rescaled equation \eqref{eq-w}, 
yielding optimal blow-up rate estimates for solutions of the original problem,
see \cite{GMS04,N15,HZ22,Z21}.
\end{remark}

\section{Liouville-type theorems}
\label{main}

In this section we present our main Liouville-type theorems. 
We have made the choice to state them under rather general assumptions, 
but we stress that they yield completely new, optimal results 
for many simple and concrete nonlinearities; see the examples in Section~\ref{sec-ex}.

By a nontrivial solution $U$ we understand
a classical solution $U=(u_1,\dots,u_N)\not\equiv(0,\dots,0)$.
By $|U(x,t)|$ (or $|\nabla U(x,t)|$, resp.) we denote the Euclidean norm
of $U(x,t)\in\R^N$ (or $\nabla U(x,t)\in\R^{nN}$, resp.);
the partial derivative $\frac{\partial U}{\partial t}$
is also denoted by $U_t$.
In the proofs we also use the notation $|A|$ to denote
the Lebesgue measure of a set $A\subset\R^n$,
and by $\#A$ we denote the cardinality of an arbitrary set $A$.

Set 
\begin{equation} \label{K}
\begin{aligned}
{\mathcal K} &:=\{U=(u_1,\dots,u_N)\in C^2(\R^n,\R^N):u_i\geq0,\ i=1,2,\dots,N\}, \\
 K &:= \{U=(u_1,\dots,u_N)\in\R^N:u_i\geq 0,\ i=1,2,\dots,N\}. 
\end{aligned}
\end{equation}
Given $f\in C(K,\R^N)$, we define  
$f^+:[0,\infty)\to[0,\infty):\lambda\mapsto\max\limits_{|U|=\lambda}|f(U)|$.

\begin{theorem} \label{thmU}
Let $1<p<p_S$. Assume that $f^+(\lambda)>0$ for $\lambda>0$,
\begin{equation} \label{FG}
f=\nabla F,\ \hbox{ where }\ 
F\in C^{1+\alpha}_{loc}(K,\R)\ \hbox{ for some }\ \alpha>0,\ F(0)=0,
\end{equation} 
\begin{equation} \label{F1}
\lim_{\lambda\to0+}\frac{f(\lambda U)}{f^+(\lambda)}= \fzero(U)
                    \quad\hbox{ locally uniformly for }\ U\in K,
\end{equation}
where $\fzero$ is homogeneous of degree $p$.
Assume also that for any $D>0$ there exist $\eta=\eta_D\in(0,p-1]$ such that
\begin{equation} \label{F2}
 \quad f(U)\cdot U\geq(2+\eta)F(U) \quad\hbox{ for all }\ U\in K,\ |U|\leq D.
\end{equation}
Finally assume that problems 
$-\Delta U=f(U)$ and $-\Delta U=\fzero(U)$ do not
possess nontrivial classical bounded solutions in ${\mathcal K}$. 
Then the system  
\begin{equation} \label{eq-U}
U_t-\Delta U=f(U)\quad\hbox{in}\quad \R^n\times\R
\end{equation}
does not possess any nontrivial 
classical bounded solution  satisfying $U(\cdot,t)\in{\mathcal K}$ for all $t\in\R$.
\end{theorem}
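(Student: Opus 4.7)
My plan is to argue by contradiction along the energy/rescaling scheme of \cite{Q21}, the novelty being the simultaneous use of two elliptic Liouville inputs---for $f$ itself and for its small-amplitude limit $\fzero$---to compensate the loss of scale invariance. Suppose $U\not\equiv 0$ is a bounded classical solution of \eqref{eq-U} with $U(\cdot,t)\in\mathcal{K}$ for every $t\in\R$, and set $M:=\sup_{\R^n\times\R}|U|$. Standard interior parabolic estimates make the family of time translates $\{U(\cdot,t+\cdot):t\in\R\}$ precompact in $C^{2,1}_{loc}(\R^n\times\R)$.

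The gradient hypothesis \eqref{FG} produces a Lyapunov functional. For a suitable nonnegative weight $\varphi$ on $\R^n$, set
\[
E_\varphi(t):=\int_{\R^n}\varphi(x)\Bigl[\tfrac12|\nabla U(x,t)|^2-F(U(x,t))\Bigr]\,dx.
\]
Testing the equation $U_t=\Delta U+\nabla F(U)$ against $\varphi U_t$ and integrating by parts gives
\[
\frac{d}{dt}E_\varphi(t)=-\int_{\R^n}\varphi\,|U_t|^2\,dx-\int_{\R^n}(\nabla\varphi\cdot\nabla U)\,U_t\,dx,
\]
whose second integral is a boundary-type perturbation. The superlinearity \eqref{F2} (together with $F(0)=0$, $F\in C^{1+\alpha}$, and $\|U\|_\infty\leq M$) provides the coercivity needed to absorb the perturbation and yields uniform $L^2_{loc}(\R^n\times\R)$ control of $U_t$. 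Along any $t_k\to\pm\infty$ a subsequential limit $U_\infty$ of $U(\cdot,t_k+\cdot)$ is therefore $t$-independent, hence a bounded nonnegative classical solution of $-\Delta U_\infty=f(U_\infty)$ on $\R^n$. The first elliptic Liouville hypothesis forces $U_\infty\equiv 0$, so $U(\cdot,t)\to 0$ in $C^2_{loc}$ as $|t|\to\infty$.

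Since $U\not\equiv 0$ but the time slices decay, a Pol\'a\v{c}ik--Quittner--Souplet-type doubling argument, applied to an amplitude functional matching the non-scale-invariant parabolic scaling (e.g.\ $\sqrt{f^+(|U|)/|U|}$), produces points $(x_k,\tau_k)$ and amplitudes $\lambda_k\searrow 0$, with $\mu_k^2:=\lambda_k/f^+(\lambda_k)$, such that the rescalings
\[
V_k(y,s):=\lambda_k^{-1}\,U(x_k+\mu_k y,\tau_k+\mu_k^2 s)
\]
satisfy $V_k\in\mathcal{K}$, $|V_k(0,0)|\sim 1$, and are uniformly bounded on parabolic cylinders of radius $R_k\to\infty$. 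By \eqref{F1},
\[
\partial_s V_k-\Delta V_k=\frac{f(\lambda_k V_k)}{f^+(\lambda_k)}\longrightarrow \fzero(V_k)
\]
locally uniformly, so parabolic compactness extracts a nontrivial bounded limit $V_\infty\in\mathcal{K}$ on $\R^n\times\R$ solving $\partial_s V_\infty-\Delta V_\infty=\fzero(V_\infty)$. The limit problem is scale-invariant with $\fzero$ homogeneous of degree $p<p_S$; the associated potential $\bar F_0(U)=\fzero(U)\cdot U/(p+1)$, obtained as the corresponding limit of $F(\lambda U)/(\lambda f^+(\lambda))$, is homogeneous of degree $p+1$ and inherits the gradient structure from $f$, so Euler's identity delivers \eqref{F2} for $\fzero$. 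The scale-invariant version of the theorem, already available in \cite{Q21}, combined with the assumed elliptic Liouville for $\fzero$, then forces $V_\infty\equiv 0$, contradicting $|V_\infty(0,0)|\sim 1$.

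The main technical obstacle, and presumably where the ``nontrivial modifications of \cite{Q21}'' referred to by the authors concentrate, is the joint tuning of the Lyapunov machinery on $\R^n$ and the rescaling scales $(\lambda_k,\mu_k)$: the energy analysis must survive the lack of global integrability of the energy density, and the doubling selection must be compatible with \eqref{F1} so that $V_\infty$ is simultaneously nontrivial, bounded, and defined on all of $\R^n\times\R$.
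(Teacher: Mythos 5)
Your high-level plan (contradiction, energy control of the time derivative, rescaling, Liouville inputs at both the $f$-scale and the $\fzero$-scale) points in the right direction, but the mechanism you propose for the crucial energy step is not viable, and the overall architecture differs from what actually makes the proof close.

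The fixed-weight Lyapunov functional $E_\varphi(t)=\int\varphi\bigl[\tfrac12|\nabla U|^2-F(U)\bigr]$ does not yield $L^2_{loc}(\R^n\times\R)$ control of $U_t$. The identity you write produces the cross term $\int U_t\cdot(\nabla\varphi\cdot\nabla U)$, and nothing in \eqref{F2} can absorb it: \eqref{F2} is a Pohozaev-type lower bound for $f(U)\cdot U$ in terms of $F(U)$, with no bearing on a term quadratic in $\nabla U$ and $U_t$. A Cauchy--Schwarz estimate turns the cross term into a bounded error, leaving you with $\frac{d}{dt}E_\varphi\le -\tfrac12\int\varphi|U_t|^2+C(\varphi)$, and since $E_\varphi$ is itself bounded this only gives $\int_{-T}^T\int\varphi|U_t|^2\le CT$, far too weak to force time-translated limits to be stationary. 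The paper avoids this by working in Giga--Kohn similarity variables $W^a_k(y,s)=(k-t)^\beta U(y\sqrt{k-t}+a,t)$, $s=-\log(k-t)$: the Gaussian weight $\rho=e^{-|y|^2/4}$ is compatible with the drift $-\tfrac12 y\cdot\nabla W$ (the operator becomes $\rho^{-1}\nabla\cdot(\rho\nabla W)$), so the cross term disappears and \eqref{F2} is used exactly where it belongs, to control the potential part of the rescaled energy $E^a_k(s)$ via \eqref{GK1}--\eqref{GK2}. Even then, the initial bound on $\int|W_s|^2\rho$ is only $Ck^{(p+1)\beta}$, which is too large; the heart of the proof is a finite bootstrap (Steps 2--5, with Lemmas \ref{lem-decay}--\ref{lem-step4}) that progressively lowers the exponent from $(p+1)\beta$ to below $\mu:=2\beta-\tfrac{n-2}{2}$, at which point the final rescaling $V_k(z,\tau)=\lambda_k^{2\beta}W^0_k(\lambda_k z,\lambda_k^2\tau+s_k)$ has $\int|\partial_\tau V_k|^2\to0$ and converges to a nontrivial \emph{stationary} solution of $-\Delta V=f(V)$.

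Two further mismatches are worth naming. First, the elliptic Liouville for $\fzero$ is not invoked at the final step (there is no passage to a parabolic limit for $\fzero$); it enters inside the bootstrap, in Lemma \ref{lem-decay}, where a secondary rescaling with variable exponent $\beta_k$ covers the case when $\lambda_k^{-2\beta_k}\phi(\hat s_k)\to 0$ and the limiting stationary equation becomes $\Delta V+\fzero(V)=0$ rather than $\Delta V+f(V)=0$. Your route instead ends with a parabolic entire solution of the $\fzero$ system and then appeals to a scale-invariant parabolic Liouville theorem; for systems that step would require \cite{Q22}, not \cite{Q21}, and it is not needed here because the theorem is stated with elliptic Liouville hypotheses precisely to bypass it. Second, the doubling step as you describe it is not well-founded on $\R^n\times\R$: there is no boundary and $M(U)$ is bounded above, so the Pol\'a\v{c}ik--Quittner--Souplet doubling lemma does not by itself produce rescalings with the properties you assert. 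Doubling is used in this paper for universal estimates (Theorem \ref{thmUB}), not for the whole-space Liouville theorem; inside the proof of Theorem \ref{thmU} the role you assign to doubling is played by a covering argument (Lemma \ref{lemmaG}) and the selection of suitable times (Lemma \ref{lem-step4}) within the bootstrap.
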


Set $\R^n_+:=\{x=(x_1,\dots,x_n)\in\R^n:x_n>0\}$,
$$
{\mathcal K}_+ :=\{U=(u_1,\dots,u_N)\in C^2(\overline{\R^n_+},\R^N):u_i\geq0\hbox{ in }\R^n_+,
\ u_i=0\hbox{ on }\partial\R^n_+,\  i=1,2,\dots,N\}.
$$
and consider the problem
\begin{equation} \label{eq-U-half}
\left.\begin{aligned}
 U_t-\Delta U &= \varphi(U) &\qquad&\hbox{in }\ \R^N_+\times\R, \\
 U &= 0 &\qquad&\hbox{on }\ \partial\R^N_+\times\R.
\end{aligned}\ \right\}
\end{equation}

\begin{theorem} \label{thmU-half}
Let all the assumptions of Theorem~\ref{thmU} be satisfied.
Assume, in addition, that problems \eqref{eq-U-half} with
$\varphi\in\{f,\fzero\}$ do not possess nontrivial classical bounded
stationary solutions in ${\mathcal K}_+$.
Then problem \eqref{eq-U-half} with $\varphi=f$
does not possess any nontrivial classical bounded solution
satisfying  
$U(\cdot,t)\in{\mathcal K}_+$ for all $t\in\R$.
\end{theorem}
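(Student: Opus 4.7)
The plan is to follow the scheme of Theorem~\ref{thmU} closely, with modifications to accommodate the half-space geometry and the Dirichlet condition. Suppose for contradiction that $U$ is a bounded nontrivial classical solution of \eqref{eq-U-half} with $\varphi=f$ satisfying $U(\cdot,t)\in\mathcal{K}_+$ for all $t\in\R$. Using the gradient structure $f=\nabla F$, I would test the equation against $U_t\,\xi^2$ with $\xi$ a smooth spatial cutoff; the boundary integrations at $\partial\R^n_+$ vanish automatically since $U$, and hence $U_t$, vanishes there, so no new boundary term is created compared with the whole-space case. Combining the resulting identity with the superlinearity \eqref{F2} and with the boundedness of $U$ and its derivatives (from interior and boundary parabolic regularity up to $\partial\R^n_+$) yields a global bound of the form
$$\int_{-\infty}^{\infty}\int_{\R^n_+}\xi^2|U_t|^2\,dx\,dt<\infty$$
for every admissible cutoff $\xi$.

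A mean-value argument then produces a sequence $t_k\to\infty$ along which $U_t(\cdot,t_k)\to 0$ in $L^2_{\mathrm{loc}}(\overline{\R^n_+})$. Parabolic $C^{2,\alpha}$ estimates up to the boundary make $\{U(\cdot,t_k)\}$ precompact in $C^2_{\mathrm{loc}}(\overline{\R^n_+})$, so along a subsequence it converges to a bounded stationary limit $U^*\in\mathcal{K}_+$ satisfying $-\Delta U^*=f(U^*)$ with $U^*=0$ on $\partial\R^n_+$. If $U^*\not\equiv 0$, the added half-space elliptic Liouville assumption for $f$ gives the desired contradiction.

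In the remaining case $U^*\equiv 0$ --- i.e.\ $U(\cdot,t_k)\to 0$ locally uniformly while $U\not\equiv 0$ --- I would use a doubling and blow-up step. Select $(x_k,t_k)$ along which $\lambda_k:=|U(x_k,t_k)|$ is, up to a constant, maximal on a suitable preceding cylinder, and rescale
$$V_k(y,s)=\lambda_k^{-1}\,U\bigl(x_k+\mu_k y,\,t_k+\mu_k^2 s\bigr),\qquad\mu_k:=\bigl(\lambda_k/f^+(\lambda_k)\bigr)^{1/2}.$$
Then $V_k$ satisfies $V_{k,t}-\Delta V_k=f(\lambda_k V_k)/f^+(\lambda_k)$, and by \eqref{F1} the right-hand side converges locally uniformly to $\fzero(V_k)$. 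Depending on whether $d_k/\mu_k:=\mathrm{dist}(x_k,\partial\R^n_+)/\mu_k\to\infty$ or stays bounded (after extraction), the limiting problem is posed on $\R^n$ or on a translate of $\R^n_+$. Reapplying the energy/mean-value argument to $V_k$ and passing to the limit yields a bounded nontrivial stationary solution $V^*$ of $-\Delta V=\fzero(V)$ with $|V^*(0)|=1$, either in $\mathcal{K}$ on $\R^n$ or in $\mathcal{K}_+$ on $\R^n_+$, contradicting the relevant Liouville theorem for $\fzero$.

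The main obstacle will be this blow-up step: one must verify that the superlinearity condition and the boundary regularity estimates transfer uniformly to the sequence $V_k$, so that the second energy argument really produces a nontrivial bounded stationary limit with the correct boundary behavior. In particular, the subcase where $x_k$ stays at distance comparable to $\mu_k$ from $\partial\R^n_+$ requires a careful recentering so that the half-space Liouville theorem for $\fzero$ can be applied cleanly.
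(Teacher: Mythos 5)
The paper does not argue directly in the half-space as you do; its proof reduces Theorem~\ref{thmU-half} to Theorem~\ref{thmU} by \emph{odd reflection}: extend $F(V):=F(-V)$ for $V\in -K$ and $U(x',x_n):=-U(x',-x_n)$ for $x_n<0$ (the extension is $C^{1+\alpha}$ since $\nabla F(0)=f(0)=0$), then run the whole-space proof with $K,\mathcal K$ replaced by $\widetilde K,\widetilde{\mathcal K}$ and keep track in the blow-up of the rescaled distance $\eta_k$ from the blow-up center to $\partial\R^n_+$: if $\eta_k\to\infty$ the limit lives in $\R^n$, if $\eta_k$ stays bounded a recentering yields a half-space stationary limit. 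Your strategy of working directly in $\R^n_+$ and tracking the blow-up scale versus distance to the boundary is consistent in spirit with this (and the paper even remarks that such a direct modification is possible), so your second and third paragraphs are reasonable in outline.

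The real problem is the first step. Testing the equation against $U_t\xi^2$ and using boundedness of $U,\nabla U$ does \emph{not} yield $\int_{-\infty}^{\infty}\int_{\R^n_+}\xi^2|U_t|^2\,dx\,dt<\infty$. The energy identity produced this way reads (after Young's inequality)
$$
\frac12\int_{t_1}^{t_2}\!\!\int_{\R^n_+}\xi^2|U_t|^2\,dx\,dt
\le E_\xi(t_1)-E_\xi(t_2)+C\!\int_{t_1}^{t_2}\!\!\int_{\R^n_+}|\nabla U|^2|\nabla\xi|^2\,dx\,dt,
$$
and the commutator term on the right grows linearly in $t_2-t_1$, while $E_\xi(t_1)-E_\xi(t_2)$ has no sign or monotonicity that would make it compensate. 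So you only get an $O(t_2-t_1)$ bound, not finiteness over $\R$, and the subsequent mean-value selection of $t_k$ with $U_t(\cdot,t_k)\to0$ collapses. This is precisely why the proof of Theorem~\ref{thmU} (and hence of Theorem~\ref{thmU-half}) is as involved as it is: it must pass to Giga--Kohn similarity variables $W^a_k(y,s)=(k-t)^\beta U(y\sqrt{k-t}+a,t)$, use the weighted energy $E^a_k$ and the differential inequalities \eqref{GK1}--\eqref{GK2}, and then run the multi-step bootstrap of Lemmas~\ref{lem-decay}--\ref{lem-wq} to lower the exponent $\gamma$ in the $L^2$-estimate of $W_s$ from $(p+1)\beta$ down below $\mu=2\beta-(n-2)/2$. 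A direct energy/mean-value argument of the type you sketch does appear in the paper, but only in the proof of Theorem~\ref{thmUK2}, and there it is restricted to $p\le p_{sg}$ and still requires the similarity-variable machinery; it does not carry over to the full range $p<p_S$ required by Theorem~\ref{thmU-half}. Your proposal therefore has a genuine gap at the foundational energy step, before the blow-up analysis even begins.
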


\begin{theorem} \label{thmUK}
Let $1<p<p_{sg}$. Assume \eqref{FG}.
Assume also that for any $D>0$ there exist $C_D,c_D>0$ and $\xi=(\xi_1,\dots,\xi_N)\in(0,\infty)^N$
such that
\begin{equation} \label{F3}
|f(U)|\leq C_D|U|^q\ \hbox{ and }\ 
\xi\cdot f(U)\geq c_D|U|^p\quad \hbox{ for all }\ U\in K,\ |U|\leq D,
\end{equation} 
where $q\in[0,p]$, $p<1+\frac2n(q+1)$ if $q<p-1$.
Finally assume that the problem $-\Delta U=f(U)$ does not possess
nontrivial classical bounded solutions in ${\mathcal K}$. 
Then system \hbox{\eqref{eq-U}} does not possess any nontrivial 
classical bounded solution  satisfying $U(\cdot,t)\in{\mathcal K}$ for all $t\in\R$.
\end{theorem}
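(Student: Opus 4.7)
The plan is to follow the energy-based strategy used for Theorem~\ref{thmU}, itself a nontrivial adaptation of \cite{Q21}, in a simplified form that exploits the subcritical range $p<p_{sg}$ and the explicit polynomial envelope \eqref{F3}, thereby obviating the use of the limit nonlinearity $\fzero$ and the superlinearity assumption \eqref{F2}.

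Arguing by contradiction, let $U\not\equiv 0$ be a bounded classical solution of \eqref{eq-U} with $U(\cdot,t)\in{\mathcal K}$ for all $t\in\R$, and set $M:=\sup_{\R^n\times\R}|U|>0$. The bound $|f(U)|\leq C_M|U|^q$ and standard interior parabolic regularity yield uniform $C^{2,1}_{loc}$-estimates on $U$ depending only on $M$. The gradient structure $f=\nabla F$ gives, for any nonnegative $\phi\in C_c^\infty(\R^n)$, the local energy identity
\[
\frac{d}{dt}\int_{\R^n}\phi\left(\tfrac12|\nabla U|^2-F(U)\right)dx+\int_{\R^n}\phi|U_t|^2\,dx=-\int_{\R^n}(\nabla\phi\cdot\nabla U)\cdot U_t\,dx.
\]
Choosing a rescaled cutoff $\phi=\phi_R$ supported in $B_{2R}$, combining the uniform $C^{2,1}_{loc}$-estimates with the Fujita-type restriction $p<1+\tfrac{2}{n}(q+1)$ when $q<p-1$, and controlling the error term by Cauchy--Schwarz, I would integrate in time to derive $\int_{-\infty}^{\infty}\int_{B_R}|U_t|^2\,dx\,dt<\infty$ for every $R>0$. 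An averaging argument then yields a sequence $s_k\to\pm\infty$ along which $\|U_t(\cdot,s_k)\|_{L^2(B_R)}\to 0$ for each $R>0$.

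To extract a nontrivial stationary limit, I would use the scalar ancient supersolution $v:=\xi\cdot U$, which satisfies $v_t-\Delta v\geq c\,v^p$ (by $\xi\cdot f(U)\geq c_M|U|^p$ together with $|U|\leq v/\min_i\xi_i$), and select a sequence $(x_k,t_k)$ with $|U(x_k,t_k)|\to M$; by perturbing $t_k$ within a bounded time interval (using the uniform $C^{2,1}$-regularity to retain $|U(x_k,t_k)|\to M$), I can simultaneously arrange $\|U_t(\cdot,t_k)\|_{L^2(B_R)}\to 0$. Translating $U_k(x,t):=U(x+x_k,t+t_k)$ and extracting a $C^{2,1}_{loc}$-subsequential limit $U_\infty$ yields a bounded nonnegative classical solution of \eqref{eq-U} with $|U_\infty(0,0)|=M$ and $\partial_t U_\infty\equiv 0$. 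Thus $U_\infty$ is a nontrivial bounded stationary solution of $-\Delta U=f(U)$ in ${\mathcal K}$, contradicting the hypothesized elliptic Liouville theorem.

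The main technical obstacle is closing the energy estimate, i.e.\ bounding the error term $\int(\nabla\phi_R\cdot\nabla U)\cdot U_t\,dx$ after integration in time and as $R\to\infty$. This is exactly where the subcritical growth $p<p_{sg}$ and the Fujita-type constraint in \eqref{F3} intervene, providing the quantitative decay without recourse to the rescaled limit problem $-\Delta U=\fzero(U)$ of Theorem~\ref{thmU}. Once the $L^2_{loc}$-integrability of $U_t$ is in hand, the translation--compactness argument and the appeal to the elliptic Liouville hypothesis complete the proof in a routine manner.
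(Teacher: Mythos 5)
The approach you outline diverges from the paper's: the paper works with the Giga--Kohn similarity variables $W_k^a(y,s)=(k-t)^\beta U(y\sqrt{k-t}+a,t)$ and the Gaussian-weighted energy $E_k^a(s)$, using \eqref{F3} to bound $\int\int|W_s|^2\rho\,dy\,ds\le Ck^{\tilde\beta}$ with $\tilde\beta<\mu$, and then undoes the scaling via $\lambda_k=k^{-1/2}$ to obtain a stationary limit. Your plan instead attempts a local energy estimate for $U$ in the original variables with a spatial cutoff $\phi_R$, aiming directly at $\int_{-\infty}^{\infty}\int_{B_R}|U_t|^2\,dx\,dt<\infty$.

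This is where the proposal has a genuine gap. Integrating your identity over $[T_1,T_2]$ and applying Cauchy--Schwarz to the error term gives, after absorbing the $|U_t|^2$ part,
\[
\tfrac12\int_{T_1}^{T_2}\!\!\int\phi\,|U_t|^2\,dx\,dt\;\le\; \Bigl[\int\phi\bigl(F(U)-\tfrac12|\nabla U|^2\bigr)dx\Bigr]_{T_1}^{T_2}
+ \tfrac12\int_{T_1}^{T_2}\!\!\int\phi^{-1}|\nabla\phi|^2|\nabla U|^2\,dx\,dt.
\]
The first bracket is bounded uniformly in $T_1,T_2$ by the interior estimates, but the remaining term is a space-time integral of a nonnegative quantity over an annulus and a time interval of length $T_2-T_1$; in general it grows linearly in $T_2-T_1$ and there is no mechanism to control it as $T_2-T_1\to\infty$. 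Neither $p<p_{sg}$ nor the Fujita-type restriction $p<1+\tfrac{2}{n}(q+1)$ supplies any pointwise space-time decay of $|\nabla U|$ on the annulus that could make this integral finite; indeed, a priori, $U$ could for example be nearly time-periodic far from the origin. The crucial role of those two conditions in the actual proof is quite different: they are used, via the estimate \eqref{estK} obtained from $v:=\xi\cdot W$ (in similarity variables, with the Gaussian weight $\rho$), to show $\tilde\beta<\mu$, which in turn makes $\lambda_k^{2\mu}k^{\tilde\beta}\to0$ in the final rescaling. Without the similarity scaling and the weight $\rho$, these conditions do not close the energy estimate, so the central step of your argument is missing. (The later translation--compactness extraction of a stationary limit, using the supersolution $v=\xi\cdot U$ and the choice of $(x_k,t_k)$ with $|U(x_k,t_k)|\to M$, is a reasonable variant of the paper's concluding step, but it rests on the unavailable $L^2_{loc}$ bound on $U_t$.)
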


\begin{theorem} \label{thmUK2}
Let $1<p\leq p_{sg}$. Assume  
\eqref{FG}, \eqref{F2}, and let $F(U)>0$ for $U\neq0$.
Assume also that for any $D>0$ there exist $C_D,c_D>0$ and $\xi=(\xi_1,\dots,\xi_N)\in(0,\infty)^N$
such that
\begin{equation} \label{F4}
|f(U)|\leq C_D(|U|^p+G(U))\ \hbox{ and }\ 
\xi\cdot f(U)\geq c_D(|U|^p+G(U))\quad \hbox{ for all }\ U\in K,\ |U|\leq D,
\end{equation} 
where 
$G:K\to[0,\infty)$ is continuous and $q$-homogeneous, $q\in[0,p]$.
Finally assume that the problem $-\Delta U=f(U)$ does not possess
nontrivial classical bounded solutions in ${\mathcal K}$. 
Then system \hbox{\eqref{eq-U}} does not possess any nontrivial 
classical bounded solution  satisfying $U(\cdot,t)\in{\mathcal K}$ for all $t\in\R$.
\end{theorem}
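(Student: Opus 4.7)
I would argue by contradiction following the gradient-flow strategy of Theorem~\ref{thmU}, but exploiting $p\leq p_{sg}$, the positivity $F>0$, and the growth bound \eqref{F4} so as to bypass the limiting nonlinearity $\fzero$ and invoke only the elliptic Liouville hypothesis for $-\Delta U=f(U)$ itself. Suppose $U\not\equiv 0$ is a classical bounded solution of \eqref{eq-U} with $U(\cdot,t)\in\mathcal K$ for all $t\in\R$, and set $M:=\|U\|_\infty<\infty$; by standard parabolic regularity, $\nabla U\in L^\infty(\R^n\times\R)$ as well. Fix cutoffs $\psi_R\in C_c^\infty(\R^n)$ with $\psi_R\equiv 1$ on $B_R$, $\operatorname{supp}\psi_R\subset B_{2R}$ and $|\nabla\psi_R|\le C/R$.

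Two integral identities would form the heart of the argument. First, using $f=\nabla F$, multiplying \eqref{eq-U} by $U_t\psi_R^2$ and absorbing the cutoff cross-term via Cauchy--Schwarz yields the localized \emph{Lyapunov estimate}
\[
\mathcal E_R(t_2)-\mathcal E_R(t_1)+\tfrac12\int_{t_1}^{t_2}\!\!\int|U_t|^2\psi_R^2\,dx\,dt\le CR^{n-2}(t_2-t_1),
\]
where $\mathcal E_R(t):=\int(\tfrac12|\nabla U|^2-F(U))\psi_R^2\,dx$ satisfies $|\mathcal E_R|\le CR^n$. Second, testing against $\xi\cdot U\psi_R^2$ and combining \eqref{F2} with the lower bound in \eqref{F4} yields the \emph{Pohozaev-type estimate}
\[
c\int_{t_1}^{t_2}\!\!\int(|U|^p+G(U))\psi_R^2\,dx\,dt \le C R^n + CR^{n-2}(t_2-t_1),
\]
where the restriction $p\leq p_{sg}$ is precisely what renders the cutoff error $\int|U|^p|\nabla\psi_R|^2$ affordable. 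Choosing scales $R_k\to\infty$ and times $t_k\to-\infty$ in a coupled way (e.g.\ $t_k\sim-R_k^2$) and invoking uniform $C^{2+\alpha}_{loc}$ parabolic estimates, I can pass $U(\cdot,t_k)\to V$ in $C^2_{loc}(\R^n)$ for some bounded $V\in\mathcal K$ satisfying $-\Delta V=f(V)$ in $\R^n$; the elliptic Liouville hypothesis then forces $V\equiv 0$.

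To upgrade this sequential convergence to $U\equiv 0$, I would exploit the Lyapunov monotonicity of $\mathcal E_R$ (valid up to the $O(R^{n-2})$ cutoff error per unit time) together with $\mathcal E_R(t_k)\to 0$: this forces $\mathcal E_R(t)\le o_{R\to\infty}(R^n)$ for every $t$, after which the quantitative bound $F(U)\ge c|U|^{p+1}$ (a consequence of \eqref{F2}, \eqref{F4}, $F>0$ and continuity on the compact range of $U$) propagates to $U(\cdot,t)\equiv 0$, contradicting nontriviality. \textbf{The main obstacle} lies in the delicate balance underlying this last step: the cutoff error per unit time $R^{n-2}$ is of the same order as the leading energy term $R^n$ on the natural time scale $t\sim R^2$, so the joint choice of $R_k,t_k$ and the extraction of a genuine stationary limit demand careful coupling. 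The critical case $p=p_{sg}$ is precisely the borderline where the Pohozaev-type test-function argument becomes marginal, and the mixed growth $|U|^p+G(U)$ in \eqref{F4} means the $q$-homogeneous term $G$ must be carried through the entire analysis alongside $|U|^p$ rather than absorbed as a lower-order perturbation.
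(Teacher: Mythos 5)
Your proposal takes a genuinely different route from the paper's proof, and it has a gap that I do not believe can be closed by the cutoff argument alone.

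\textbf{Comparison with the paper's approach.}
The paper's proof of Theorem~\ref{thmUK2} works entirely in similarity (Giga--Kohn) variables: one considers $W^a_k(y,s)=(k-t)^\beta U(y\sqrt{k-t}+a,t)$ with $s=-\log(k-t)$ and the weighted energy $E^a_k(s)$ built from the Gaussian weight $\rho(y)=e^{-|y|^2/4}$, exactly as in the proof of Theorem~\ref{thmU}. The Gaussian weight makes all spatial integrals finite without any cutoff, and the source terms in \eqref{GK1}--\eqref{GK2} carry a factor $\phi^{-(p+1)}(s)$ (resp.\ $\phi^{q-p}(s)$) which decays as $s\to -\infty$, so the dissipation identity suffers no ``cutoff leakage.'' Crucially, after rescaling back via $V_k(z,\tau)=\lambda_k^{2\beta}W^0_k(\lambda_k z,\lambda_k^2\tau+s_k)$ with $\lambda_k=k^{-1/2}$, the hypothesis $p\le p_{sg}$ (equivalently $\beta\le\mu=2\beta-\frac{n-2}{2}$) gives the uniform bound $\int_{-\infty}^0\int_{\R^n}|U_t|^2\,dx\,dt\le C$; this is the engine of the proof. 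From it one extracts $t_j\to -\infty$ with $\int_{\R^n}|U_t(\cdot,t_j)|^2\to 0$, shows $\|U(\cdot,t_j)\|_\infty\to 0$ by compactness and the elliptic Liouville assumption, deduces $c_k:=\sup_{t\le k(1-e)}\|U(\cdot,t)\|_\infty\to 0$, and feeds this back to conclude $\int_{-\infty}^0\int_{\R^n}|U_t|^2\,dx\,dt=0$, so $U$ is stationary, a contradiction. You instead work in physical variables with compactly supported cutoffs $\psi_R$ and propose a localized Lyapunov and a Pohozaev-type identity.

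\textbf{Where your argument breaks down.}
Your localized Lyapunov estimate carries an error $CR^{n-2}(t_2-t_1)$, and $|\mathcal E_R|\le CR^n$ a priori. On the natural parabolic time scale $t_2-t_1\sim R^2$ these two contributions are of the same order $R^n$, so the inequality only gives $\int_{t_1}^{t_2}\int |U_t|^2\psi_R^2 \le CR^n$; it does \emph{not} give the key uniform bound $\int_{-\infty}^0\int_{\R^n}|U_t|^2\,dx\,dt<\infty$. Without that bound there is no way to extract a sequence $t_j\to-\infty$ along which the time derivative vanishes in $L^2(\R^n)$, which is precisely what is needed to identify a nontrivial \emph{stationary} limit and then trigger the elliptic Liouville theorem. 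You correctly identify this balance as ``the main obstacle,'' but the proposal offers no mechanism to beat it, and I see none available to the bare cutoff approach: the error $R^{n-2}$ per unit time is an honest boundary leakage term, whereas in the paper's similarity-variable energy it is replaced by the decaying factors $\phi^{-(p+1)}$ and $\phi^{q-p}$, and by the Gaussian tails $\rho$, both of which disappear in the limit. The same issue affects the very last step of your plan: the bound $F(U)\ge c|U|^{p+1}$ that you invoke to ``propagate'' the conclusion is not actually available from the stated hypotheses for $N>1$ (the lower bound in \eqref{F4} controls $\xi\cdot f$, not $f(U)\cdot U$), and in any case the paper's argument does not need such a bound because it extracts the contradiction directly from $\int_{-\infty}^0\int_{\R^n}|U_t|^2=0$. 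In short, the cutoff approach is not merely ``delicate'' here; it is missing the structural ingredient (Gaussian-weighted similarity energy combined with $p\le p_{sg}$) that makes the paper's proof close.
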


\begin{remark} \label{rem1}\rm
(i) 
Given $I\subset\{1,2,\dots,N\}$ ($I$ may be empty), set
$${\mathcal K}_I:=\{U\in C^2: u_i\geq0\hbox{ for }i\in I\}$$
and $K_I:=\{U\in\R^N:u_i\geq0\hbox{ for }i\in I\}$
(hence ${\mathcal K_I}={\mathcal K}$ and $K_I=K$ if $I=\{1,2,\dots,N\}$).
Then Theorem~\ref{thmU} remains true
if we replace ${\mathcal K}$ and $K$ by ${\mathcal K}_I$ and $K_I$,
respectively.

If $n=1$ (or $n>1$ and we consider radial solutions only),  
$J\in\{0,1,2,\dots\}$, $\alpha_{ij}\in\R$ and $C_j\geq0$ ($i=1,2,\dots,N,\ j=1,2,\dots,J$), then
we also set
$${\mathcal K}_{I,z}={\mathcal K}_{I,z}(\alpha_{ij},C_j):=\Bigl\{U\in{\mathcal K_I^r}: 
  z\Bigl(\sum_{i=1}^N\alpha_{ij}u_i\Bigr)\leq C_j,\ j=1,2,\dots,J\Bigr\},$$
where ${\mathcal K}_I^r=\{U\in {\mathcal K}_I: U\hbox{ is radially symmetric}\}$ if $n>1$,
${\mathcal K}_I^r={\mathcal K}_I$ if $n=1$, and
$z(\varphi)$ is the number of sign changes of $\varphi$. 
Then we can replace ${\mathcal K}$ and $K$ in Theorem~\ref{thmU} by ${\mathcal K}_{I,z}$ and $K_I$,
respectively, but if $n>1$, then we also have to assume the elliptic Liouville theorems for $n=1$,
cf.~\cite[Corollary 2]{Q22}.
Notice that if $n>1$, $I=\{1,\dots,N\}$ and $J=0$, for example, then ${\mathcal K}_{I,z}$
consists of nonnegative radial functions. 

(ii) 
Assumption \eqref{FG} guarantees that $f$ is continuous, 
hence the function $\fzero$ in \eqref{F1} is continuous too.
Notice also that the nonexistence of nontrivial bounded solutions of 
$-\Delta U=f(U)$ and $-\Delta U=\fzero(U)$
in $\mathcal K$ implies $f(U)\ne0$ and $\fzero(U)\ne0$ for $U\in K\setminus\{0\}$.

On the other hand, if \eqref{F1} is satisfied with some (finite) function $\fzero$, 
then $\fzero$ is necessarily homogeneous (see \cite[Lemma~7.3]{QS24}). 
In the particular case when $f^+$ is asymptotically $p$-homogeneous, i.e.
\begin{equation} \label{fpp}
\lim\limits_{\lambda\to0+}\frac{f^+(\lambda)}{\lambda^p}=c>0,
\end{equation}
then \eqref{F1} is equivalent to
\begin{equation} \label{F1p}
\lim\limits_{\lambda\to0+}\frac{f(\lambda U)}{\lambda^p}=\hat f_0(U)\qquad
\hbox{locally uniformly for } \ U\in K,
\end{equation}
where $\hat f_0=c\fzero$. 
Finally, if $f$ itself is $p$-homogeneous, then \eqref{F1p} is true with $\hat f_0:=f$ 
(hence \eqref{F1} is true) 
and assumption \eqref{F2} is not needed in Theorem~\ref{thmU} due to \cite{Q22}.

(iii) The elliptic Liouville theorem \cite[Theorem~2.1]{QS24} 
requires 
\begin{equation} \label{F2QSa}
 2nF(U)-(n-2)f(U)\cdot U\geq c_D|U|^{p+1},\quad |U|\leq D,
\end{equation}
(see \cite[(2.3)]{QS24}).  
If $F(U)\geq \tilde c_D|U|^{p+1}$ for $|U|\leq D$ 
(which follows from \eqref{F3} if $N=1$), then
\eqref{F2QSa} can be written in the form 
\begin{equation} \label{F2QSb}
f(U)\cdot U\leq (p_S+1-\eta)F(U),\quad |U|\leq D,
\end{equation}
where $\eta=\eta_D>0$. 
The inequalities in \eqref{F2QSb} and 
\eqref{F2} are equivalent to
\begin{equation} \label{fF}
 (p_S+1-\eta)F(U)\geq f(U)U \geq (2+\eta)F(U).
\end{equation}
A sufficient condition for \eqref{fF} can be written in the form
\begin{equation} \label{fFsuff}
(p_S-\eta)f(U) \geq f'(U)U \geq (1+\eta) F(U). 
\end{equation}
\qed
\end{remark}

We now turn to the special case of systems of two equations 
and present results based on equality or proportionality of components.
The following proposition is a consequence of
the proof of \cite[Theorem~3]{Q16a}.

\begin{proposition} \label{propQ-JDE}
Let $b_1,b_2,c_1,c_2>0$, $c_1c_2>b_1b_2$,
$q\geq r>0$, $q+r>1$. 
Let $(u,v)$ be a nonnegative bounded solution of
the system
\begin{equation} \label{sysQ-JDE}
\left.\begin{aligned}
u_t-\Delta u &= u^r(c_1v^q-b_1u^q) \\
v_t-\Delta v &= v^r(c_2u^q-b_2v^q)
\end{aligned}\ \right\}\quad \hbox{in }\ X\times\R,
\end{equation}
where either $X=\R^n$ or $X=\R^n_+$.
If $X=\R^n_+$, then assume also $u=v=0$ on $\partial\R^n_+\times\R$.
Then there exists $K>0$ such that $u\equiv Kv$.
\end{proposition}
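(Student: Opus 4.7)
First I would identify the proportionality constant $K>0$ by inserting the ansatz $u\equiv Kv$ into the system. Dividing the first equation in \eqref{sysQ-JDE} by $K$ and comparing with the second, both collapse to a scalar equation of the form $v_t-\Delta v=\alpha(K)v^{q+r}$ but with possibly different coefficients; equating the two coefficients gives the algebraic condition
\[
K^{r-1}(c_1-b_1K^q)=c_2K^q-b_2.
\]
The left side is positive on $(0,(c_1/b_1)^{1/q})$ while the right is positive on $((b_2/c_2)^{1/q},\infty)$, and the assumption $c_1c_2>b_1b_2$ forces these intervals to overlap. Combined with the asymptotic behavior at $0$ and $\infty$ and the constraint $q+r>1$, this yields a positive root $K$ with $b_2/c_2<K^q<c_1/b_1$.

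With this $K$ fixed, I set $w:=u-Kv$. A direct computation from the system, exploiting the homogeneity of the nonlinearities, gives
\[
w_t-\Delta w=v^{q+r}\,g(u/v),\qquad g(\xi):=c_1\xi^r-b_1\xi^{q+r}-Kc_2\xi^q+Kb_2,
\]
and by construction $g(K)=0$. Using the defining identity for $K$ to eliminate $c_1K^{r-1}$, one computes
\[
g'(K)=-qb_1K^{q+r-1}+(r-q)c_2K^q-rb_2<0,
\]
since $q\geq r>0$ and all $b_i,c_i,K$ are positive. Together with $g(0)=Kb_2>0$ and $g(\xi)\to-\infty$ as $\xi\to\infty$, one concludes that $g>0$ on $(0,K)$ and $g<0$ on $(K,\infty)$. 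In particular, $w_t-\Delta w$ and $-w$ have the same sign wherever $v>0$, so $w$ is a subsolution of a linear parabolic equation with a coefficient favorable to the maximum principle.

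To conclude, I would run a translation-and-compactness argument. Assuming toward contradiction $M:=\sup w>0$ (the case $\inf w<0$ is symmetric), pick $(x_k,t_k)$ with $w(x_k,t_k)\to M$; by boundedness of $(u,v)$ and parabolic regularity, the translated solutions $(u,v)(\cdot+x_k,\cdot+t_k)$ converge along a subsequence, locally uniformly, to a bounded nonnegative entire solution $(\tilde u,\tilde v)$ of the same system, with $\tilde w:=\tilde u-K\tilde v$ attaining its supremum $M$ at the origin. The sign property for $g$ makes $\tilde w$ a subsolution of a linear parabolic equation on $\{\tilde v>0\}$, and the strong maximum principle forces $\tilde w\equiv M$ there; reinserting into the system produces a contradiction, while the subcase $\tilde v\equiv 0$ is ruled out since it would give $\tilde u\equiv 0$ via the first equation, contradicting $M>0$.

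The main obstacle will be the global sign analysis of $g$ and, in the half-space case, the boundary behavior. Establishing that $g$ has no positive zero other than $K$ — so that the sign factorization holds throughout the range of $u/v$ and not merely near $K$ — rests on the interplay between $c_1c_2>b_1b_2$ and $q\geq r$. In the half-space case, one must additionally verify that the concentration points $(x_k,t_k)$ cannot drift toward $\partial\R^n_+\times\R$, where $u=v=0$ already forces $w=0$; this is where the half-space boundary condition is used. Both points are dealt with in the proof of \cite[Theorem~3]{Q16a}, whose argument transfers to the present setting essentially unchanged, which is the content of the phrase ``consequence of the proof'' in the statement.
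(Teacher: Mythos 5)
Your algebra for $K$ and the identity $w_t-\Delta w=v^{q+r}g(u/v)$ with $g(K)=0$, $g'(K)<0$ are correct, and the translation-plus-compactness scheme can be made to work. The paper's route (a consequence of the proof of Theorem~3 in \cite{Q16a}, parallel to the proof of Theorem~\ref{thm-proportional} given here) is however genuinely different: one does not pass to an entire limit but instead establishes a pointwise \emph{absorption inequality} $w_t-\Delta w\le -h(w)$ on $\{w>0\}$, with $h$ continuous, $h(0)=0$, $h>0$ on $(0,\infty)$, and concludes $w\le0$ by the perturbed maximum-principle Lemma~\ref{lemQ-JDE} (Q16a, Proposition~4). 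Concretely, on $\{w>0,\ v>0\}$ one writes $w=v(\xi-K)$ with $\xi=u/v>K$, so $w_t-\Delta w=-G(\xi)\,w^{q+r}$ with $G(\xi):=-g(\xi)/(\xi-K)^{q+r}$; the hypothesis $q+r>1$ is precisely what ensures $\inf_{\xi>K}G>0$, since $G(\xi)\to b_1$ as $\xi\to\infty$ and $G(\xi)\sim|g'(K)|(\xi-K)^{1-(q+r)}\to\infty$ as $\xi\downarrow K$, while $v=0$ gives $w_t-\Delta w=-b_1w^{q+r}$ directly. This absorption approach sidesteps the two delicate points you correctly flag in the compactness route: drift of near-maximizers toward $\partial\R^n_+\times\R$, and the potentially singular zeroth-order coefficient in the linearized equation when $r<1$. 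Two smaller remarks. First, the global sign of $g$ that you defer to the citation follows cleanly from $q\ge r>0$ alone: $\xi\mapsto g(\xi)/\xi^r=c_1-b_1\xi^q-Kc_2\xi^{q-r}+Kb_2\xi^{-r}$ is strictly decreasing, so $K$ is its unique positive zero and $g>0$ on $(0,K)$, $g<0$ on $(K,\infty)$. Second, the phrase ``the strong maximum principle forces $\tilde w\equiv M$'' is not quite the right tool, since the linearized coefficient has no sign and $M>0$; it is cleaner to evaluate $\tilde w_t-\Delta\tilde w$ directly at the interior maximum of $\tilde w$, where it must be $\ge0$, while the right-hand side is strictly negative (whether $\tilde v(0,0)>0$ or $\tilde v(0,0)=0$, since $\tilde u(0,0)=M>0$), giving the contradiction at once.
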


The assertion $u\equiv Kv$ in
Proposition~\ref{propQ-JDE} implies that $u$ is a bounded nonnegative
solution of the scalar equation
$u_t-\Delta u = cu^{r+q}$ in $X\times\R$ for some $c>0$
(and $u=0$ on $\partial\R^n_+\times\R$ if $X=\R^n_+$),
hence \cite[Theorem~1]{Q21} guarantees $u\equiv0$ provided $q+r<p_S$.

Next we will show that if $c_1=c_2$ and $b_1=b_2$,
then the power nonlinearities in Proposition~\ref{propQ-JDE} can be replaced
by more general nonlinearities.
Let $X\in\{\R^n,\R^n_+\}$.
We will consider the system 
\begin{equation} \label{sys-prop}
\left.\begin{aligned}
u_t-\Delta u &=\phi(u,v)k(u)[g(v)-\mu g(u)]\\
v_t-\Delta v &=\phi(u,v)k(v)[g(u)-\mu g(v)] 
\end{aligned}\ \right\}\quad \hbox{in }\ X\times\R
\end{equation}
complemented by the homogeneous Dirichlet boundary conditions if $X=\R^n_+$,
where
\begin{equation} \label{ass-prop1}
\hbox{$\phi,k,g$ are continuous, $\phi\geq c_M>0$ for $u,v\leq M$, $\mu\geq0$, $g(0)=0$, $g>0$ on $(0,\infty)$}
\end{equation}
($\phi$ could also depend on $x,t,\nabla u,\nabla v,\dots$).
Set $\varphi:=kg$ if $\mu>0$, $\varphi:=g$ if $\mu=0$, and assume that
\begin{equation} \label{ass-prop2}
 \hbox{$\varphi$ is $C^1$ on $(0,\infty)$ and $\varphi'(t)>0$ for $t>0$.}
\end{equation}
Fix also $\eps=0$ if $\mu>0$, $\eps>0$ if $\mu=0$, set $\tilde k:=k-\eps$
and assume that
\begin{equation} \label{ass-propk}
\hbox{$\tilde k(0)\geq0$ and $\tilde k/g$ is nonincreasing on $(0,\infty)$.}
\end{equation}

\begin{theorem} \label{thm-proportional}
Assume \eqref{ass-prop1}--\eqref{ass-propk}. 
Let $(u,v)$ 
be a nonnegative bounded 
strong\footnote{The couple $(u,v)$ is a strong solution if 
$(u,v)\in W^{2,1;r}_{loc}$ for all $r\in(1,\infty)$ and the equation is satisfied a.e. 
Moreover, when boundary conditions are imposed, a strong solution is assumed to be continuous up the boundary.
In the present paper we mainly deal with classical solutions;
the more general class of strong solutions in
Theorem~\ref{thm-proportional} is required by the application of this
theorem in \cite{QS24}.}
solution of \eqref{sys-prop}
satisfying the homogeneous Dirichlet boundary conditions if $X=\R^n_+$. Then $u\equiv v$.
\end{theorem}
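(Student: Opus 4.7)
The plan is to show that $w := u - v$ satisfies a linear parabolic equation of the form $w_t - \Delta w + c(x,t) w = 0$ with $c \ge 0$ bounded, and then to use a translation/compactness argument together with the strong maximum principle to force $\sup w = \inf w = 0$.

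Subtracting the two equations of \eqref{sys-prop} gives $w_t - \Delta w = \phi(u,v)\, F(u,v)$ where $F(u,v) := k(u)[g(v) - \mu g(u)] - k(v)[g(u) - \mu g(v)]$. Writing $k = \tilde k + \eps$ and regrouping, $F = \bigl(\tilde k(u) g(v) - \tilde k(v) g(u)\bigr) + \eps[g(v) - g(u)] - \mu[k(u) g(u) - k(v) g(v)]$. Using the identity $\tilde k(u) g(v) - \tilde k(v) g(u) = g(u) g(v) \bigl[(\tilde k/g)(u) - (\tilde k/g)(v)\bigr]$, assumption \eqref{ass-propk} makes the first piece equal to $-\gamma_1 (u-v)$ with $\gamma_1 \ge 0$. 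Exactly one of $\mu,\eps$ is nonzero by the convention preceding \eqref{ass-propk}, and $\varphi$ denotes $kg$ when $\mu > 0$ and $g$ when $\mu = 0$; the remaining two pieces therefore combine into $-\nu[\varphi(u) - \varphi(v)]$ for some $\nu > 0$, and \eqref{ass-prop2} gives $-\nu[\varphi(u) - \varphi(v)] = -\gamma_2 (u-v)$ with $\gamma_2 > 0$ whenever $u \ne v$. Consequently $w$ solves the linear equation $w_t - \Delta w + c(x,t) w = 0$ with $c(x,t) := \phi(u,v)(\gamma_1 + \gamma_2) \ge 0$ bounded and strictly positive wherever $u(x,t) \ne v(x,t)$.

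Next I argue by contradiction: suppose $M := \sup_{X \times \R} w > 0$. Pick $(x_k, t_k)$ with $w(x_k, t_k) \to M$ and translate $U_k(y,s) := U(y + x_k, s + t_k)$. Parabolic $W^{2,1;r}_{loc}$ estimates (the right-hand side of \eqref{sys-prop} being bounded) and Ascoli produce a subsequential $C^{2,1}_{loc}$-limit $(u_\infty, v_\infty)$, which is a bounded nonnegative strong solution of \eqref{sys-prop} on either $\R^n \times \R$, or, in the half-space case when $d_k := (x_k)_n$ stays bounded, on a translated half-space with $u_\infty = v_\infty = 0$ on the boundary. Then $w_\infty := u_\infty - v_\infty$ solves the analogous linear equation with bounded $c_\infty \ge 0$, and $w_\infty(0,0) = M = \sup w_\infty$ by construction. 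Applying the strong minimum principle for $\partial_t - \Delta + c_\infty$ to the nonnegative supersolution $M - w_\infty$, which satisfies $(\partial_t - \Delta + c_\infty)(M - w_\infty) = c_\infty M \ge 0$ and vanishes at $(0,0)$, yields $w_\infty \equiv M$ on the entire spatial domain for all $s \le 0$. In the translated half-space subcase, the boundary trace then forces $M = 0$, a contradiction. In the whole-space case, $w_\infty \equiv M > 0$ forces $u_\infty \ne v_\infty$ everywhere for $s \le 0$, hence $c_\infty > 0$ there; but plugging the constant $w_\infty \equiv M$ into its own equation gives $c_\infty M \equiv 0$, again a contradiction. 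Thus $\sup w \le 0$; by the swap symmetry $u \leftrightarrow v$ of \eqref{sys-prop}, likewise $\inf w \ge 0$, so $u \equiv v$.

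The point I expect to be most delicate is the boundedness and continuous dependence of $\gamma_1,\gamma_2$, which are formally difference quotients that could misbehave on the diagonal $\{u = v\}$. The remedy is to view $F(u,v)/(u-v)$ as a continuous function on the compact range of $(u(x,t), v(x,t))$, using that $F$ vanishes on the diagonal and that $\varphi \in C^1$ with $\varphi' > 0$ (together with monotonicity and nonnegativity of $\tilde k/g$) supplies the required regularity. This continuity is also what ensures that the limit equation in the compactness step has the same structure, allowing the strong minimum principle to be applied as above.
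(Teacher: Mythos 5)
Your algebraic decomposition of the difference equation for $w=u-v$ is correct and matches the structure used in the paper's proof: the first group yields $-\gamma_1(u-v)$ with $\gamma_1\geq 0$ via the monotonicity of $\tilde k/g$, and the remaining terms yield $-\nu[\varphi(u)-\varphi(v)]$. However, the next step --- rewriting everything as a linear equation $w_t-\Delta w + c(x,t)\,w=0$ with $c\geq 0$ \emph{bounded} --- does not follow from the hypotheses, and this gap is not reparable by the remedy you suggest. The coefficient involves $\gamma_2=\nu\,[\varphi(u)-\varphi(v)]/(u-v)$, and by the mean value theorem this equals $\nu\,\varphi'(\theta)$ for some $\theta$ between $u$ and $v$. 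Assumption \eqref{ass-prop2} only gives $\varphi\in C^1(0,\infty)$, so $\varphi'$ may blow up as $t\to 0+$ (e.g.\ $\mu=0$, $g(t)=\sqrt t$, $k\equiv 1$, $\eps=1/2$ is admissible, yet $\varphi'(t)=\tfrac12 t^{-1/2}$). Thus $c$ is unbounded near the set $\{u=v=0\}$, which the orbit $(u(x,t),v(x,t))$ has no reason to avoid. With $c$ unbounded you cannot apply the strong minimum principle for $\partial_t-\Delta+c_\infty$, nor even justify passing the coefficient to the limit along your translating sequence. The difficulty is precisely at the diagonal near the origin, which is exactly the region your ``remedy'' paragraph glosses over: $F(u,v)/(u-v)$ is \emph{not} continuous on the closed range of $(u,v)$ when $\varphi'$ is unbounded near $0$. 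A similar unboundedness issue affects $\gamma_1$ whenever $\tilde k(0)>0$ and $g(u)/u$ is unbounded near $0$.

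The paper avoids linearization altogether. It derives the \emph{nonlinear} differential inequality $w_t-\Delta w\le -c_\phi\,\nu\, h(w)$ in $W^+:=\{w>0\}$, where $h(t):=\min\bigl(t\min_{s\in[t/2,M]}\varphi'(s),\ \varphi(t)-\varphi(t/2)\bigr)$ is continuous and strictly positive on $(0,M]$; crucially, $h$ depends only on $w$, not on the individual values of $(u,v)$, so the absorption is uniformly effective without any control on $\varphi'$ near $0$. It then invokes Lemma~\ref{lemQ-JDE}, a perturbation/mollification maximum-principle argument: perturb $w$ by a small strictly subparabolic function, mollify, and obtain a contradiction at an interior maximum. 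This handles strong ($W^{2,1;r}_{loc}$) solutions and completely sidesteps the compactness/translation argument and the resulting need for a bounded zeroth-order coefficient. If you want to rescue a compactness-type approach, the natural fix would be to observe that in $W^+$ one has $w_t-\Delta w\le 0$ (drop the absorption entirely), hence $w^+$ is a bounded nonnegative subsolution of the \emph{heat} equation; translating to an interior maximum and using the strong maximum principle with $c=0$ gives $w_\infty\equiv M>0$ backward in time, and then reinserting $F(u_\infty,v_\infty)<0$ (strict since $\varphi$ is strictly increasing) gives a contradiction --- but as written, your proof does not do this and instead relies on an unjustified boundedness of $c$.
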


\section{Universal estimates}
\label{sec-ub}

Let $\fzero$ and $\finfty$ denote the limiting functions defined by \eqref{limg} and \eqref{limh} below, respectively
(if those limits exist).
Similarly as in the elliptic case (see \cite[Section~4]{QS24}), 
Liouville theorems for problems $U_t-\Delta U=\varphi(U)$ with $\varphi\in\{f,\fzero,\finfty\}$
guarantee various universal estimates for solutions of the problem
$U_t-\Delta U=f(U)$ in an arbitrary nonempty open set $D$ in $\R^n\times\R$.
These estimates can be formulated in terms of the parabolic distance
$\hbox{dist}(X,\partial D):=\inf\{d(X,Y):Y\in\partial D\}$, where
$d(X,Y):=|x-y|+|t-s|^{1/2}$, $X=(x,t)$ and $Y=(y,s)$.

Let us start with a simple proposition, whose first part is a parabolic analogue
of \cite[Proposition~4.1]{QS24} 
and whose proof is a straightforward modification of the proof 
of \cite[Proposition~4.1]{QS24}.

\begin{proposition}  \label{propdecayPar}
Let $f\in C^\alpha_{loc}(K,\R^N)$ for some $\alpha>0$, and let $\Lambda>0$.

(i) Assume that the system $V_t-\Delta V=f(V)$
admits no nontrivial classical solution $V:\R^n\times\R\to K$ with $\|V\|_\infty\le \Lambda$.
Then 
$\lim_{R\to\infty} \eta(R)=0,$
where\footnote{with the convention $\sup(\emptyset)=0$}
$$
\begin{aligned}
\eta(R)&=\hbox{$\sup\bigl\{|U(x,t)|: D\subset\R^n\times\R$ is open, $U:D\to K$ is a bounded classical solution}\\
&\qquad\quad\hbox{of $U_t-\Delta U=f(U)$, $\|U\|_\infty\le \Lambda$
and $(x,t)\in D$ with ${\rm dist}((x,t),\partial D)\ge R\bigr\}$}.
\end{aligned}$$

(ii) Assume that the system $V_t-\Delta V=f(V)$
admits no nontrivial classical solution $V:\R^n\times\R\to K$ with $\|V\|_\infty\le \Lambda$
and no nontrivial classical solution $V:\R^n_+\times\R\to K$ with $\|V\|_\infty\le \Lambda$
and $V=0$ on $\partial\R^n\times\R$. Let $D=\R^n_+\times(0,\infty)$.
Then $\lim_{R\to\infty} \tilde\eta(R)=0,$
where
$$
\begin{aligned}
\tilde\eta(R)&=\hbox{$\sup\bigl\{|U(x,t)|: t\ge R$ and 
$U:D\to K$ is a bounded classical solution of} \\ 
&\qquad\quad\hbox{$U_t-\Delta U=f(U)$ 
satisfying $U=0$ on $\partial\R^n_+\times(0,\infty)$, $\|U\|_\infty\le \Lambda\bigr\}$}.
\end{aligned}$$
 
\end{proposition}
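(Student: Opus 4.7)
The plan is to argue by contradiction in both parts using translation and parabolic compactness, as in the elliptic argument of \cite[Proposition~4.1]{QS24}. The only genuinely new feature in part~(ii) is that the base point's distance to $\partial\R^n_+$ is not controlled, so one must split into cases according to its behaviour.

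For part~(i), if $\eta(R_k)\not\to 0$ along some $R_k\to\infty$ one obtains bounded classical solutions $U_k:D_k\to K$ with $\|U_k\|_\infty\le\Lambda$ and base points $(x_k,t_k)\in D_k$ of parabolic distance at least $R_k$ from $\partial D_k$ with $|U_k(x_k,t_k)|\ge\eps_0>0$. I would translate by $V_k(y,s):=U_k(x_k+y,t_k+s)$, so that $V_k$ is defined on a parabolic ball about the origin of radius at least $R_k$. Interior parabolic Schauder estimates, applicable since $V_k$ is uniformly bounded by $\Lambda$ and $f\in C^\alpha_{loc}$, then yield uniform $C^{2+\alpha,1+\alpha/2}$ bounds on compact subsets, and a diagonal extraction produces a $C^{2,1}_{loc}$-limit $V:\R^n\times\R\to K$ which is a bounded classical solution of $V_t-\Delta V=f(V)$ with $\|V\|_\infty\le\Lambda$ and $|V(0,0)|\ge\eps_0$, contradicting the whole-space Liouville hypothesis.

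For part~(ii), suppose $\tilde\eta(R_k)\not\to 0$, choose $U_k$ and $(x_k,t_k)\in D$ with $t_k\ge R_k$ and $|U_k(x_k,t_k)|\ge\eps_0$, and set $\xi_k:=x_{k,n}\ge 0$. Along a subsequence, $\xi_k$ converges in $[0,\infty]$. If $\xi_k\to\infty$, the parabolic distance from $(x_k,t_k)$ to $\partial D$ is at least $\min(\xi_k,t_k^{1/2})\to\infty$, and the argument of part~(i) produces a nontrivial bounded classical solution on $\R^n\times\R$, contradicting the whole-space Liouville theorem. If $\xi_k\to\xi^*\in(0,\infty)$, translate only in the tangential and time variables by setting $V_k(y',y_n,s):=U_k(x'_k+y',y_n,t_k+s)$ on $\R^n_+\times(-t_k,\infty)$, and invoke boundary parabolic Schauder estimates (available because of the flat boundary and the zero Dirichlet datum) to obtain uniform $C^{2+\alpha,1+\alpha/2}$ bounds up to $\partial\R^n_+$ on compact sets. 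The resulting $C^{2,1}_{loc}$-limit is a bounded classical solution $V:\R^n_+\times\R\to K$ vanishing on $\partial\R^n_+\times\R$ with $|V(0,\xi^*,0)|\ge\eps_0$, contradicting the half-space Liouville hypothesis.

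The case $\xi_k\to 0$ is the place I expect to be the main obstacle, since neither Liouville theorem directly forbids it; it must be excluded by quantitative regularity. The same boundary Schauder estimate used above gives uniform Lipschitz control of the tangentially and temporally shifted $V_k$ in the variable $y_n$ up to $\{y_n=0\}$ on a fixed parabolic neighbourhood of the origin, with constant depending only on $\Lambda$ and the local H\"older norm of $f$. Combined with the Dirichlet condition $V_k(y',0,s)=0$ this yields $|U_k(x_k,t_k)|=|V_k(0,\xi_k,0)|\le C\xi_k\to 0$, contradicting $|U_k(x_k,t_k)|\ge\eps_0$. With all three cases excluded, $\tilde\eta(R)\to 0$ follows.
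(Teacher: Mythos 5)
Your argument is correct and follows the translation-plus-parabolic-compactness strategy that the paper points to via its reference to the elliptic \cite[Proposition~4.1]{QS24}. The monotonicity of $\eta$, $\tilde\eta$ in $R$, the passage to a contradicting sequence, interior (resp.~up-to-the-boundary) Schauder estimates to extract a limit, and the three-way case split on $\xi_k=x_{k,n}$ are exactly the moves one needs; there is no genuinely different route here.

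One small remark on the case $\xi_k\to 0$: you do not actually need to invoke a separate Lipschitz estimate. Once you have the boundary Schauder bounds you already used in the case $\xi_k\to\xi^\ast\in(0,\infty)$, you may pass to a limit $V$ on $\R^n_+\times\R$ in the same way; then $|V_k(0,\xi_k,0)|\ge\eps_0$ together with uniform convergence on a compact neighbourhood of the origin gives $|V(0,\xi_k,0)|\ge\eps_0-o(1)$, while continuity of $V$ up to the boundary and the Dirichlet condition force $V(0,\xi_k,0)\to V(0,0,0)=0$, a contradiction. This folds the third case into the same compactness argument rather than treating it by a separate quantitative bound; both variants are correct and equally straightforward. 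Your gradient-bound version has the mild cosmetic advantage of making explicit where the contradiction comes from.
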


\begin{remark} \label{rem-decay} \rm
If $D=\{(x,t)\in \R^n\times(0,\infty): 0<x_n<1\}$ and $U=0$ on $\{(x,t)\in\partial D:t>0\}$,
for example, then the decay estimate in Proposition~\ref{propdecayPar}(ii) fails in general: 
Consider $N=n=1$, $f(u)=u^p$, $p>1$,
and the positive solution of $-u''=u^p$ in $(0,1)$ satisfying $u(0)=u(1)=0$.  
\end{remark}

Next we proceed with an analogue of \cite[Theorem 4.3]{QS24}. 

\begin{theorem} \label{thmUB}
Let $f\in C^\alpha_{loc}(K,\R^N)$ for some $\alpha>0$, 
$\Lambda>0$,
and let $U:D\to K$ be a classical solution of \eqref{eq-U} in $D\subset\R^n\times\R$.  

(i) Assume $f^+(\lambda)>0$ for $\lambda\geq\Lambda$ and
\begin{equation} \label{limh} 
  \lim_{\lambda\to\infty}\frac{f(\lambda U)}{f^+(\lambda)}= \finfty(U)
  \quad\hbox{ locally uniformly for }\ U\in K,
\end{equation}
where $\finfty$ is homogeneous of order $p>1$.
Assume also that the problem $V_t-\Delta V=\finfty(V)$
does not possess nontrivial classical bounded entire solutions $V:\R^n\times\R\to K$.
Set $D_\Lambda:=\{X\in D:|U(X)|\geq\Lambda\}$.
Then there exists a constant $C=C(n,f,\Lambda)$ (independent of $D$ and $U$) 
such that 
\begin{equation} \label{UBh}
 |f(U(X))|\leq C|U(X)|(1+\hbox{\rm dist}^{-2}(X,\partial D))
\quad\hbox{for all } X\in D_\Lambda.
\end{equation}

(ii) Assume $f^+(\lambda)>0$ for $\lambda\in(0,\Lambda]$ and
\begin{equation} \label{limg} 
  \lim_{\lambda\to0+}\frac{f(\lambda U)}{f^+(\lambda)}= \fzero(U)
  \quad\hbox{ locally uniformly for }\ U\in K,
\end{equation}
where $\fzero$ is homogeneous of order $q>1$.
Assume also that problems $V_t-\Delta V=\varphi(V)$
do not possess nontrivial classical bounded entire solutions $V:\R^n\times\R\to K$
for $\varphi\in\{f,\fzero\}$. Let $\|U\|_\infty\leq\Lambda$.
Then there exists a constant $C=C(n,f,\Lambda)$ (independent of $D$ and $U$) 
such that
\begin{equation} \label{UB}
 |f(U(X))|\leq C|U(X)|\hbox{\rm dist}^{-2}(X,\partial D)
\quad\hbox{for all } X\in D.
\end{equation}

(iii)
Assume  $f^+(\lambda)>0$ for $\lambda>0$.
Let \eqref{limh} and \eqref{limg} be true,
where $\finfty$ and $\fzero$ is homogeneous of order $p>1$ and $q>1$, respectively.
Assume also that problems $V_t-\Delta V=\varphi(V)$ do not possess 
nontrivial classical bounded entire solutions $V:\R^n\times\R\to K$
for $\varphi\in\{f,\fzero,\finfty\}$.
Then there exists a constant $C=C(n,f)$ (independent of $D$ and $U$) such that 
\eqref{UB} is true.
\end{theorem}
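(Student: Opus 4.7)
The plan is to follow the parabolic doubling method of Poláčik--Quittner--Souplet, adapted to the non scale invariant setting as in the elliptic analogue \cite[Theorem~4.3]{QS24}. The natural rescaling quantity in all three parts is
$$M(X):=\bigl(f^+(|U(X)|)/|U(X)|\bigr)^{1/2},$$
defined wherever $U(X)\neq 0$. The Liouville assumptions force $\finfty$ (and, in (ii)--(iii), also $\fzero$) to be nonvanishing on $K\setminus\{0\}$, since otherwise constants $V\equiv cU_0$ with $\finfty(U_0)=0$ or $\fzero(U_0)=0$ would provide bounded nontrivial entire solutions of the limit problems. Consequently \eqref{limh} and \eqref{limg} give $|f(U)|\asymp f^+(|U|)$ in the respective regimes, so that estimates \eqref{UBh} and \eqref{UB} are equivalent, up to multiplicative constants, to $M(X)\leq C(1+{\rm dist}^{-1}(X,\partial D))$ and $M(X)\leq C\,{\rm dist}^{-1}(X,\partial D)$, respectively.

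For (i) I argue by contradiction: if the estimate fails, there exist $D_k$, solutions $U_k:D_k\to K$ of \eqref{eq-U}, and $X_k\in D_{k,\Lambda}$ with $M_k(X_k)(1+{\rm dist}^{-1}(X_k,\partial D_k))^{-1}\to\infty$, which forces both $M_k(X_k)\to\infty$ and (since $f^+(\lambda)/\lambda$ is bounded on compact subsets of $[\Lambda,\infty)$) $|U_k(X_k)|\to\infty$. Applying the parabolic doubling lemma of \cite{PQS07b} to $\tilde M_k:=\max(M_k,1)$ on $D_k$ with outer boundary $\partial D_k$ yields $Y_k\in D_k$ with $\mu_k:=\tilde M_k(Y_k)=M_k(Y_k)\to\infty$ (so $Y_k\in D_{k,\Lambda}$ for large $k$), $\mu_k(1+{\rm dist}(Y_k,\partial D_k))\to\infty$, and $\tilde M_k(Z)\leq 2\mu_k$ for every $Z\in D_k$ with parabolic distance $d(Z,Y_k)\leq k/\mu_k$. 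Writing $\lambda_k:=|U_k(Y_k)|$ (so that $\mu_k^2=f^+(\lambda_k)/\lambda_k$), I consider
$$V_k(y,s):=\lambda_k^{-1}\,U_k\bigl(Y_k+(\mu_k^{-1}y,\mu_k^{-2}s)\bigr),$$
which satisfy $|V_k(0,0)|=1$ and $\partial_s V_k-\Delta V_k=f(\lambda_k V_k)/f^+(\lambda_k)$. The doubling inequality, together with the regular variation $f^+(\lambda)/\lambda\sim c\lambda^{p-1}$ at infinity inherited from the $p$-homogeneity of $\finfty$, gives $|U_k(Z)|\leq C\lambda_k$ on the rescaled cylinder whenever $Z\in D_{k,\Lambda}$, while at points with $|U_k(Z)|<\Lambda$ one has the immediate bound $|V_k|\leq\Lambda/\lambda_k\to 0$. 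Interior parabolic Schauder estimates then furnish compactness in $C^{2+\alpha,1+\alpha/2}_{\rm loc}$, and the local uniform convergence built into \eqref{limh} allows passage to the limit, producing a nontrivial bounded $V:\R^n\times\R\to K$ solving $V_t-\Delta V=\finfty(V)$, contradicting the Liouville assumption.

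Parts (ii) and (iii) follow the same contradiction/doubling/rescaling scheme, with a case analysis on the behavior of $\lambda_k=|U_k(Y_k)|$. In (ii), $\lambda_k\in(0,\Lambda]$: either $\lambda_k\to 0$ (whence $\mu_k\to 0$ by $q$-homogeneity of $\fzero$, but still $\mu_k\,{\rm dist}(Y_k,\partial D_k)\to\infty$ by doubling, so the rescaled cylinders exhaust $\R^n\times\R$ and \eqref{limg} yields a limit contradicting the Liouville theorem for $\fzero$), or $\lambda_k\to\lambda_\star\in(0,\Lambda]$ along a subsequence, in which case no magnitude rescaling is needed --- the translates $\tilde V_k(y,s):=U_k(Y_k+(y,s))$, bounded by $\Lambda$ on expanding balls (since now ${\rm dist}(Y_k,\partial D_k)\to\infty$), satisfy the original equation and converge to a nontrivial bounded solution of $V_t-\Delta V=f(V)$, contradicting the $f$-Liouville theorem. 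In (iii), no upper bound on $|U|$ is assumed, so $\lambda_k$ may also tend to $+\infty$; the three subcases $\lambda_k\to 0$, $\lambda_k\to\lambda_\star\in(0,\infty)$, $\lambda_k\to+\infty$ are handled exactly as above, yielding contradictions with the Liouville theorems for $\fzero$, $f$, $\finfty$, respectively.

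The main delicate point I foresee is the proper setup of the doubling lemma in part (i): since $M$ is only controlled where $|U|\geq\Lambda$, one must replace it by $\tilde M=\max(M,1)$ before applying the lemma on all of $D$ with outer boundary $\partial D$; the divergence $\tilde M_k(Y_k)\to\infty$ then automatically places $Y_k$ in $D_{k,\Lambda}$ for large $k$. The companion technical issue is the uniform $L^\infty$ bound on $V_k$ required for parabolic compactness: it combines the doubling inequality on $\{M_k\geq 1\}\supset D_{k,\Lambda}$ with the trivial bound $|V_k|\leq\Lambda/\lambda_k\to 0$ on $D_k\setminus\{M_k\geq 1\}$, and crucially relies on the monotone regular variation of $f^+(\lambda)/\lambda$ at infinity implied by the $p$-homogeneity of $\finfty$ via \eqref{limh} (the analogous near-zero estimate being used for parts (ii)--(iii)).
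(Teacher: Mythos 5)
Your proposal follows the same overall strategy as the paper's proof: introduce the quantity $M$, argue by contradiction, apply a parabolic doubling lemma, rescale, pass to a limit via parabolic regularity, and split into the cases $\lambda_k\to\infty$, $\lambda_k\to0$, $\lambda_k\to\lambda_\star\in(0,\infty)$ to contradict the Liouville hypotheses for $\finfty$, $\fzero$, $f$, respectively. The structure and the final rescaling argument in the $\lambda_k\to\lambda_\star$ case (followed by a change of variables to reduce $f(aW)/f^+(a)$ to $f$) match the paper closely.

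However, there are two technical points where your argument is incomplete and diverges from the paper's. First, your definition $M(X)=\bigl(f^+(|U(X)|)/|U(X)|\bigr)^{1/2}$ in part (i) is problematic where $U$ vanishes: since in part (i) nothing is assumed about $f$ near $0$ beyond $C^\alpha_{loc}$ with $\alpha>0$ (which may be $<1$, and $f(0)$ need not vanish), the ratio $f^+(\lambda)/\lambda$ can blow up as $\lambda\to0$, so $M$ need not be bounded on compact subsets of $D$. The doubling lemma of \cite{PQS07a} requires exactly such local boundedness, and your truncation $\tilde M=\max(M,1)$ does not help, since $\max$ truncates from below while the danger is from above. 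The paper resolves this by instead taking $M(U)=\sqrt{f^+(|U|)/(\sigma+|U|)}$ with $\sigma=1$ in case (i), which is bounded on bounded sets by continuity of $f^+$, and then checking that the key monotone implication (their \eqref{s0bound}) survives the shift. Second, the assertion that \eqref{limh} implies $f^+(\lambda)/\lambda\sim c\lambda^{p-1}$ with "monotone regular variation" is imprecise: \eqref{limh} only yields regular variation of $f^+$ of index $p$ (allowing slowly varying factors such as logarithms), and regular variation does not imply eventual monotonicity. What is actually needed, and what the paper uses via \cite[Remark~7.1]{QS24}, is a Potter-type bound $f^+(\lambda s)\ge cs^r f^+(\lambda)$ (for $s\ge1$, $r>1$, $\lambda$ in the relevant range), from which one derives that $M^2(U)\le\kappa M^2(V)$ forces $|U|\le s_0|V|$ after choosing the doubling constant as $\sqrt\kappa$. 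This is what gives the uniform $L^\infty$ bound on the rescaled solutions; your version with doubling constant $2$ can be made to work by the same Potter bound, but the way you phrased it is not a proof. Both points are repairable and do not undermine the overall scheme, but as written the application of the doubling lemma in part (i) is not justified.
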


\begin{remark}\rm \label{rem-Liouv}
(i) If \eqref{FG} and \eqref{F2} are true,
then instead of assuming the nonexistence of nontrivial nonnegative bounded entire solutions
of \eqref{eq-U} in Theorem~\ref{thmUB} it is sufficient to assume 
the nonexistence of nontrivial nonnegative bounded stationary solutions of \eqref{eq-U}.
This follows from Theorem~\ref{thmU}.

(ii) Assume \eqref{FG}, \eqref{F2}, and let $p,q\in(1,p_S)$.
Assume \eqref{limg}, \eqref{limh}, where $\fzero$ and $\finfty$
are homogeneous of degree $p$ and $q$, respectively,
$\fzero,\finfty\in C^\alpha_{loc}$.
If systems $U_t-\Delta U=\varphi(U)$ do not posssess
nontrivial nonnegative bounded stationary solutions for $\varphi\in\{f,\fzero,\finfty\}$,
then the conclusion of Theorem~\ref{thmUB}(iii) remains true.
In fact,  the nonexistence of nontrivial entire solutions
of $U_t-\Delta U=\varphi(U)$ follows from  Theorem~\ref{thmU} (if $\varphi=f$) 
and \cite{Q22} (if $\varphi\in\{\fzero,\finfty\}$).

(iii) Let $\Omega\subset\R^n$ have uniformly smooth boundary and $T>0$.
Consider solutions $U$ of \eqref{eq-U} in $D=\Omega\times(0,T)$,
satisfying also
the Dirichlet boundary condition $U=0$ on $\partial\Omega\times(0,T)$.
Then similarly as in \cite[Section~4]{PQS07b}, by using Liouville
theorems both in the whole space $\R^n\times\R$ and the half-space
$\R^n_+\times\R$ in the proof of Theorem~\ref{thmUB}(i),
one can obtain universal estimates of the form
$$  |f(U(X))|\leq C|U(X)|(1+t^{-1}+(T-t)^{-1}) $$ 
instead of \eqref{UBh}.
\qed
\end{remark}

We next consider the following Lane-Emden type system
with $N=2$ and $U=(u,v)$:
\begin{equation} \label{LEsys1}
\left.
  \begin{aligned}
 u_t-\Delta u&=f_1(v), \\ 
 v_t-\Delta v&=f_2(u),   
   \end{aligned}
   \quad\right\}
\end{equation}
where
\begin{equation} \label{hypLE1}
\hbox{$f_i\in C([0,\infty))$, $f_i=f_i(s)$ are $C^1$ and positive for $s>0$ large,}
\end{equation}
\begin{equation} \label{hypLE2}
p,q>0,\quad
\hbox{$L_1(s):=s^{-p} f_1(s)$, $L_2(s):=s^{-q} f_2(s)$ satisfy }
\lim_{s\to\infty} \frac{sL'_i(s)}{L_i(s)}=0.
\end{equation}
System \eqref{LEsys1} is a modification of the classical Lane-Emden system
\begin{equation} \label{LEsys2}
\left.
  \begin{aligned}
 u_t-\Delta u&=v^{p}, \\ 
 v_t-\Delta v&=u^{q}.   
   \end{aligned}
   \quad\right\}
\end{equation}
If $p_0\in(1,p_S)$, then the arguments in \cite{FIM17} combined with \cite[Theorem~1]{Q21}
guarantee the existence of $\eps>0$ such that  
\begin{equation} \label{LE-Liouv}
\hbox{system \eqref{LEsys2} does not possess positive classical solutions in
$\R^n\times\R$} 
\end{equation}
provided $|p-p_0|+|q-p_0|\leq\eps$ (cf.~also \cite[Remark 32.8a]{QS19}).
Assertion \eqref{LE-Liouv} is also true if 
$pq>1$ and $\max(\alpha,\beta)\geq n$, where
$\alpha=\frac{2(p+1)}{pq-1}$ and $\beta=\frac{2(q+1)}{pq-1}$,
see \cite[Theorem~32.7]{QS19}. 

A straightforward modification of the proof of \cite[Theorem~4.3]{PQS07a}
shows that property \eqref{LE-Liouv} guarantees universal estimates
\begin{equation} \label{estimLE2}
 u(X)\leq C(1+\hbox{\rm dist}^{-\alpha}(X,\partial D)), \qquad
   v(X)\leq C(1+\hbox{\rm dist}^{-\beta}(X,\partial D)) 
\end{equation}
for positive solutions of \eqref{LEsys1} in $D:=\Omega\times(0,T)$,
provided $f_1,f_2$ satisfy \eqref{hypLE1}, $pq>1$ and 
the functions $L_1,L_2$ in \eqref{hypLE2} satisfy
\begin{equation} \label{f1f2pq}
\lim_{s\to\infty}L_1(s)=\lim_{s\to\infty}L_2(s)=1.
\end{equation}
Notice that if $p\ne q$, then one cannot use Theorem~\ref{thmUB}(i) to obtain \eqref{estimLE2} since
the limit $\finfty$ in \eqref{limh} satisfies $\finfty(U)=(v^p,0)$ or $\finfty(U)=(0,u^q)$,
hence the system $U_t-\Delta U=\finfty(U)$ possesses nontrivial entire constant solutions.
However, one can 
obtain a generalization of estimates \eqref{estimLE2}
for any $f_1,f_2$ satisfying \eqref{hypLE1}-\eqref{hypLE2}
(without assuming \eqref{f1f2pq}).
In fact, set
$$h_i(s)=sf_i(s),\quad s\ge 0.$$
One can check (see \cite{QS24}) that there exists $s_0>0$ such that the functions
\begin{equation} \label{s0}
\begin{aligned}
&\hbox{$h_1$, $h_2$, $h_1^{-1}$, $h_2^{-1}$, $h_1^{-1}\circ h_2$ and $h_2^{-1}\circ h_1$}\\
&\hbox{are defined, positive and increasing on $[s_0,\infty)$.}
   \end{aligned}
\end{equation}
The proof of the following theorem follows by a straightforward 
modification of \cite[Theorem~4.1]{QS24}.  

\begin{theorem} \label{thmLEpert1}
Assume \eqref{hypLE1}-\eqref{hypLE2}, $pq>1$ and \eqref{LE-Liouv}.
Then there exists a constant $C=C(n,f_1,f_2)>0$ such that, if $\Omega$ is an arbitrary domain of
$\R^n$, $T>0$
and $(u,v)$ is a nonnegative classical solution of~\eqref{LEsys1} in
$D:=\Omega\times(0,T)$, then we have the estimates
\begin{equation} \label{estimLE1}
  \begin{aligned}
\frac{f_2(u)}{(h_1^{-1}\circ h_2)(u)} &\le C(1+\hbox{\rm dist}^{-2}(X,\partial D)),\quad X\in D_1,\\
\frac{f_1(v)}{(h_2^{-1}\circ h_1)(v)} &\le C(1+\hbox{\rm dist}^{-2}(X,\partial D)),\quad X\in D_2,
  \end{aligned}
\end{equation}
where $D_1=\{X\in D:u(X)\ge s_0\}$, $D_2=\{X\in D:v(X)\ge s_0\}$
and $s_0$ satisfies \eqref{s0}.
\end{theorem}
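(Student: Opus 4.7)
The plan is to argue by contradiction via the parabolic doubling lemma of \cite{PQS07b}, following the elliptic scheme of \cite[Theorem~4.1]{QS24}. Suppose the first estimate in \eqref{estimLE1} fails: one extracts sequences $(u_k,v_k)$ of classical solutions on parabolic domains $D_k=\Omega_k\times(0,T_k)$ and points $X_k\in D_k$ with $u_k(X_k)\ge s_0$ such that, setting
\[
M_k(X):=\Bigl[\frac{f_2(u_k(X))}{(h_1^{-1}\circ h_2)(u_k(X))}\Bigr]^{1/2}
\]
where $u_k(X)\ge s_0$ and $M_k(X):=0$ otherwise, one has $M_k(X_k)\cdot\mathrm{dist}(X_k,\partial D_k)\to\infty$. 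The parabolic doubling lemma then yields improved points $Y_k$ with $u_k(Y_k)\ge s_0$, $M_k\le 2M_k(Y_k)$ on the parabolic ball of radius $M_k(Y_k)^{-1}$ around $Y_k$, and still $M_k(Y_k)\cdot\mathrm{dist}(Y_k,\partial D_k)\to\infty$.

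Set $a_k:=u_k(Y_k)$, $b_k:=(h_1^{-1}\circ h_2)(a_k)$, and $\mu_k:=M_k(Y_k)$. The relation $h_1(b_k)=h_2(a_k)$, i.e.\ $b_kf_1(b_k)=a_kf_2(a_k)$, delivers the key consistency $\mu_k^2=f_2(a_k)/b_k=f_1(b_k)/a_k$, which is precisely what allows the two differently-scaled components to share a common parabolic space-time factor. Introducing
\[
\tilde u_k(y,s):=a_k^{-1}u_k\bigl(Y_k+(y/\mu_k,\,s/\mu_k^2)\bigr),\qquad \tilde v_k(y,s):=b_k^{-1}v_k\bigl(Y_k+(y/\mu_k,\,s/\mu_k^2)\bigr),
\]
one has $\tilde u_k(0,0)=1$ and the system transforms into
\[
\tilde u_{k,s}-\Delta\tilde u_k=\frac{f_1(b_k\tilde v_k)}{f_1(b_k)},\qquad \tilde v_{k,s}-\Delta\tilde v_k=\frac{f_2(a_k\tilde u_k)}{f_2(a_k)}.
\]

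From \eqref{hypLE2} and $pq>1$ one derives the asymptotic $f_2(u)/(h_1^{-1}\circ h_2)(u)\sim u^{(pq-1)/(p+1)}L(u)$ with $L$ slowly varying, so $\mu_k\to\infty$ forces $a_k\to\infty$, and hence $b_k\to\infty$. Combined with the doubling control $M_k\le 2\mu_k$ and a parallel doubling applied to $v_k$ (or direct parabolic propagation through the $\tilde v_k$-equation), this yields uniform local $L^\infty$-bounds on $(\tilde u_k,\tilde v_k)$ on parabolic cylinders of any fixed radius. Karamata's uniform convergence theorem for the slowly varying $L_1,L_2$ gives $f_1(b_ks)/f_1(b_k)\to s^p$ and $f_2(a_kt)/f_2(a_k)\to t^q$ locally uniformly on $[0,\infty)$. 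Interior parabolic Schauder estimates then extract a subsequence converging in $C^{2,1}_{\rm loc}(\R^n\times\R)$ to a nonnegative classical solution $(\tilde u,\tilde v)$ of \eqref{LEsys2} satisfying $\tilde u(0,0)=1$, in contradiction with \eqref{LE-Liouv}. The second estimate follows symmetrically upon exchanging the roles of $(u,v)$ and $(f_1,f_2)$.

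The main obstacle is the coupled choice of amplitude factors: because the two components carry different exponents, they must be normalized by distinct amplitudes $a_k$ and $b_k=(h_1^{-1}\circ h_2)(a_k)$, whose coupling $h_1(b_k)=h_2(a_k)$ is exactly what makes a single common space-time scale $\mu_k$ consistent with both equations at once. The slow variation hypothesis \eqref{hypLE2} is indispensable at two points: first, to derive the monotone asymptotic that turns the doubling control of $M_k$ into a genuine $L^\infty$-bound on $\tilde u_k$, and second, via Karamata's theorem, to ensure that the rescaled nonlinearities converge locally uniformly to pure powers despite the non-homogeneity of the original problem.
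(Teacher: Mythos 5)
Your overall scheme (parabolic doubling, the coupled rescaling with amplitudes $a_k$, $b_k=(h_1^{-1}\circ h_2)(a_k)$ and a common space--time factor $\mu_k$, Karamata's uniform convergence theorem, compactness, contradiction with \eqref{LE-Liouv}) is the right modification of \cite[Theorem~4.1]{QS24} and of the classical scheme of \cite[Theorem~4.3]{PQS07a}, and your key identity $\mu_k^2=f_2(a_k)/b_k=f_1(b_k)/a_k$ (a direct consequence of $h_1(b_k)=h_2(a_k)$) is exactly what makes the two equations rescale consistently. The transformed system and the regular-variation index computation $f_2(u)/(h_1^{-1}\circ h_2)(u)\sim u^{(pq-1)/(p+1)}L(u)$ are also correct.

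The step that is not adequately justified is the uniform $L^\infty$ bound on $\tilde v_k$. Your quantity $M_k$ is a function of $u_k$ alone, so the doubling lemma yields $\Phi_1(u_k)\le 4\mu_k^2$ near $Y_k$, hence a bound on $\tilde u_k$, but it gives nothing on $v_k$. Neither of the two options you mention closes this gap as stated: a ``parallel doubling applied to $v_k$'' selects a \emph{different} bad point and does not produce a bound valid on the same rescaled cylinder around $Y_k$; and ``direct parabolic propagation through the $\tilde v_k$-equation'' is not a one-line consequence, since the boundedness of the right-hand side of the $\tilde v_k$-equation (which follows from the bound on $\tilde u_k$) does not, by itself, bound a nonnegative $\tilde v_k$ backward in time (take $\tilde v_k\equiv k$ as a cautionary example). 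The standard way to make the argument precise is to build both components into $M$ from the start, e.g.
\[
M(X)=\Bigl[\tfrac{f_2(u(X))}{(h_1^{-1}\circ h_2)(u(X))}\Bigr]^{1/2}+\Bigl[\tfrac{f_1(v(X))}{(h_2^{-1}\circ h_1)(v(X))}\Bigr]^{1/2}
\]
(suitably truncated below $s_0$), so that doubling controls both rescaled components at once; this is the route taken in \cite[Theorem~4.3]{PQS07a} and in the elliptic version \cite[Theorem~4.1]{QS24}, and it simultaneously delivers both estimates in \eqref{estimLE1}. Alternatively, one can salvage your single-component $M_k$ by observing that $\tilde u_k$ is a bounded nonnegative supersolution, so $\int f_1(b_k\tilde v_k)/f_1(b_k)$ is locally bounded, whence a uniform weak-$L^{p-\eps}$ bound on $\tilde v_k$ via Potter's bounds, and then invoking the De Giorgi--Moser local boundedness estimate for parabolic subsolutions with small integrability exponent; but this route is delicate (especially if $p<1$) and should be spelled out rather than left implicit. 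Finally, one should also address nontriviality of the limit: if $M$ uses both components, only $\tilde u(0,0)^{1/2}+\tilde v(0,0)^{1/2}\ge c>0$ is automatic, and one must argue (via the strong maximum principle and the Liouville theorem for the heat equation) that the limit is a genuine positive solution of \eqref{LEsys2}.
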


\section{Examples} 
\label{sec-ex}
\vskip2mm

\begin{example} \label{benchmark} \rm
Let $N=1$, $n\geq3$, $n/(n-2)<p<p_S$, $a\geq0$, $U\geq0$, and 
$$f(U)=(1+a\min(1,U^{p-1}))U^p.$$
If $a=a_0:=2p/[(n-2)p-n]$, then the equation
$-\Delta U=f(U)$ in $\R^n$ possesses an explicit
positive solution, while if $a<\theta a_0$, where
$$\theta:=\frac{n+2-(n-2)p}{4p}\,\frac{2(n-2)p-n}{(n-2)p-2}<1,$$
then that equation does not possess nontrivial
nonnegative solutions, see \cite[Subsection~3.1]{QS24}.  
If $a<\theta a_0$, then Theorems~\ref{thmU} and~\ref{thmUB}
guarantee the universal estimates \eqref{UB} for nonnegative
solutions of the equation $U_t-\Delta U=f(U)$, while
these estimates fail if $a=a_0$.
Notice also that Theorem~\ref{thmUB}(i) 
guarantees the
singularity estimates \eqref{UBh} for all $a\geq0$.
\end{example}

\begin{example} \label{ex-HZ} \rm
Let $N=1$, $1<p<p_S$, $a\in\R$, $f(U)=|U|^{p-1}U\log^a(2+U^2)$. 
It has recently been proved in \cite{HZ22}
that the (possibly sign-changing) solutions to the 
problem $U_t-\Delta U=f(U)$ in $\R^n\times(0,T)$ satisfy 
the blow-up rate estimate
\begin{equation} \label{BU-HZ}
\|U(t)\|_\infty\leq C(T-t)^{-1/(p-1)}(-\log(T-t))^{-a/(p-1)}
\quad\hbox{for } t\in(T-\eps,T),
\end{equation}
where $C$ depends on $U$ (see \cite[Theorem~2]{HZ22}).
The proof is based on a highly nontrivial 
adaptation of the method from \cite{GMS04},
where the scale-invariant case $a=0$ was considered. 
In the case of positive solutions,
the blow-up rate estimate \eqref{BU-HZ}
can also be obtained as a simple consequence of Theorem~\ref{thmUB}(i)
or \cite[Theorem~3.1]{S23}, 
cf.~\cite[Remark~2(i)]{S23}.
In fact, 
Theorem~\ref{thmUB}(i) implies estimate \eqref{UBh}
which guarantees the blow-up rate estimate \eqref{BU-HZ}.
In addition, estimate \eqref{UBh} is universal
(it does not depend on $U$) and it can also be easily established
for a whole class of nonlinearities.

Assume $U\geq0$.
We will show that if $p<p_{sg}$ or
\begin{equation} \label{HZa}
a\in(-(p-1)c_\ell,(p_S-p)c_\ell],\quad\hbox{where }\ 
c_\ell:=\inf_{U>0}\log(2+U^2)\frac{2+U^2}{2U^2}>1,
\end{equation}
then the assumptions of Theorem~\ref{thmUB}(iii) are satisfied,
hence we obtain the universal estimates \eqref{UB} which
include both estimates of singularities and decay of solutions.
Example~\ref{benchmark} shows that even if the singularity
estimates \eqref{UBh} are true for any $a\in\R$ and $p<p_S$,
one cannot expect such general result in the case
of estimates \eqref{UB}.

Hence assume that $p<p_{sg}$ or \eqref{HZa} is true.
\cite[Lemma~8.1]{QS24} 
implies that \eqref{limg},\eqref{limh}
are true with $\fzero(U)=\finfty(U)=U^p$.
We will show that the required Liouville theorems for 
$\varphi\in\{f,\fzero,\finfty\}$ in Theorem~\ref{thmUB} are true.
If $\varphi\in\{\fzero,\finfty\}$, then this follows from \cite{Q21}.
If $\varphi=f$, then we can use  
Theorem~\ref{thmUK} and \cite[Theorem 8.4]{QS19} if $p<p_{sg}$
and Theorem~\ref{thmU} if $p\geq p_{sg}$:
The inequality $a>-(p-1)c_\ell$ implies
$(f(U)U)'\geq(2+\eta)f(U)$ for some
$\eta\in(0,p-1]$, hence \eqref{F2} is true.
The nonexistence of positive solutions of 
$\Delta U+f(U)=0$ in $\R^n$, $n>2$, follows from
the inequality $a\leq(p_S-p)c_\ell$ and \cite[Theorem 1.3]{LZ03}.
Notice also that the condition
$a\in(-(p-1)c_\ell,(p_S-p)c_\ell)$ is equivalent to \eqref{fFsuff}.
\end{example}

\begin{example} \label{ex-pq} \rm
Let $N=1$. If $f(U)=U^p+U^q$ with $1<p<q\leq p_S$,
then \cite{LZ03} and Theorem~\ref{thmU}
guarantee Liouville theorems for bounded solutions of $U_t-\Delta U=\varphi(U)$
with $\varphi\in\{f,\fzero\}$, $\fzero(U):=U^p$,
hence Theorem~\ref{thmUB}(ii) implies
the decay estimate $|U(x,t)|\leq Ct^{-1/(p-1)}$ for global bounded
nonnegative solutions of $U_t-\Delta U=f(U)$
in $\R^n\times(0,\infty)$.
In addition, if $q<p_S$, then the Liouville theorem
is true also if $\varphi(U)=\finfty(U):=U^q$,
hence Theorem~\ref{thmUB} guarantees the
universal estimates \eqref{UB} for nonnegative solutions of $U_t-\Delta U=f(U)$ in $D$.

Notice that if $p_k:=p_{sg}+\frac1k$, $q_k:=2p_k-1=p_S+\frac2k$ and
$f_k(U)=U^{p_k}+U^{q_k}$, where $k\in{\mathbb N}$ is large, 
then the equations $-\Delta U=f_k(U)$ in $\R^n$, 
hence also the equations $U_t-\Delta U=f_k(U)$ in $\R^n\times\R$
possess explicit bounded positive solutions (see \cite[Subsection~3.2]{QS24}),
but the limiting equation
$U_t-\Delta U=U^{p_{sg}}+U^{p_S}$ 
does not possess bounded positive entire solutions. 
Similarly, if $p\in(p_{sg},p_S)$ is fixed, then the equation $-\Delta U=U^p+U^q$
possesses bounded positive entire solutions for $q\in(p_S,p_S+\eps)$
(see \cite{QS24} again),
but the equation $U_t-\Delta U=U^p+U^{p_S}$
does not possess bounded positive entire solutions.
These examples show that unlike in the case of scale-invariant problems
(see \cite[Proposition 21.2b and subsequent comments]{QS19}),
the Liouville property need not have an open range in problems
without scale invariance. 
On the other hand, assume \eqref{limh} and \eqref{limg},
and let the Liouville theorem be true for the triplet $(f,\fzero,\finfty)$
(i.e.~for all problems
$U_t-\Delta U=\varphi(U)$ with $\varphi\in\{f,\fzero,\finfty\}$).
If $(f_k,(\bar{f_k})_0,(\bar{f_k})_\infty)\to(f,\fzero,\finfty)$ in a suitable way, 
then similar contradiction arguments and in the proof of Theorem~\ref{thmUB} show that
the Liouville theorem is true for $(f_k,(\bar{f_k})_0,(\bar{f_k})_\infty)$ with $k$ large enough.
\end{example}

\begin{example} \label{cubic-quintic} \rm
Let $n=3$, $N=2$, $U=(u,v)$,  
$G(U)=\frac14(u^4+v^4+2\alpha u^2v^2)$,
$H(U)=\frac b6(u^6+v^6+2\beta u^3v^3)$ (or $H(U)=bu^3v^3$),
$F(U)=G(U)+H(U)$, 
where $\alpha>-1$, $b>0$, $\beta\geq-1$.
Let $f=\nabla F$, hence $\fzero=\nabla G$.
Then
\cite[Corollary~2.1]{QS24} 
implies that the elliptic problems
$-\Delta U=\varphi(U)$ do not possess nontrivial nonnegative bounded solutions in $\R^n$
for $\varphi\in\{f,\fzero\}$ 
and Theorem~\ref{thmU} implies that the same remains true
for the parabolic problems $U_t-\Delta U=\varphi(U)$ in $\R^N\times\R$.
Consequently, if $U$ is a bounded nonnegative solution of $U_t-\Delta U=f(U)$
in $\R^n\times(0,\infty)$, then Theorem~\ref{thmUB}(ii) 
implies 
$|U(x,t)|\leq C/\sqrt{t}$. 
\end{example}

\begin{example} \rm
Let $N=2$, $n\leq4$, $f=\nabla F$, where $F(U)=u_1^2u_2+\frac13 u_2^3
+\sum_{(i,j)\in E} a_{ij}u_1^iu_2^j$,
$a_{ij}\geq0$, 
$E=\{(i,j): i,j\ge 0, 4\le i+j\le M\}$,
$M\geq4$.
If $-\Delta U=f(U)$, then
$w:=u_1+u_2$ satisfies $-\Delta w\geq w^2$, hence $w=0$ (see \cite{QS19}).
If $U_t-\Delta U=f(U)$, then $w:=u_1+u_2$ satisfies
$w_t-\Delta w\geq w^2$, but this inequality
does not have nontrivial solution only if $n\leq2$ (see \cite[Proposition 21.14]{QS19}).
However, one can use  Theorem~\ref{thmU} to prove $U=0$ whenever $n\le4$.
Since the required Liouville theorem is also true in the case of the function $\fzero$,
Theorem~\ref{thmUB}(ii) implies the
universal estimates \eqref{UB} for bounded solutions. 
\end{example}

\begin{example} \rm
Let $N=2$, $n=3$, $f(U)=(2u_1^3\log(3+u_1^2)-u_1u_2^2,2u_2^3\log(3+u_2^2)-u_1^2u_2)$.
Then $f^+(\lambda)=2\lambda^3\log(3+\lambda^2)$,
$\fzero(U)=(u_1^3-u_1u_2^2/2\log3,u_2^3-u_1^2u_2/2\log3)$, $\finfty(U)=(u_1^3,u_2^3)$,
and \eqref{fF} is true (for $f$ and $\fzero$).
Consequently, one can use 
\cite[Theorem 2.1]{QS24} 
together with Theorems~\ref{thmU} and~\ref{thmUB}(iii) 
in order to show that the universal estimate \eqref{UB} is true
for any nonnegative solution of $U_t-\Delta U=f(U)$ in $D$. 
\end{example}
 
\begin{example} \rm
Assume $a\in\R$, $b_1,b_2\geq0$, $b_1+b_2>0$, $p>1$, $\mu>0$, $X=\R^n$, and
let $U=(u,v)$ be a bounded nonnegative solution of
system \eqref{sys-prop} with $\phi\equiv1$, $k(u)=b_1+b_2u$ and $g(u)=u^p\log^a(2+u^2)$. 
Then there exists $a_0=a_0(p,b_1,b_2)>0$ such that 
if $a>-a_0$, then the assumptions of Theorem~\ref{thm-proportional}
are satisfied, hence $u\equiv v$. If $\mu>1$, then this fact and Lemma~\ref{lemQ-JDE} imply $u\equiv v\equiv0$.
If $\mu<1$, then assume $n>2$ and $p<p_S-1$ (or $p<p_S$ and $b_2=0$).
Then there exists $a_1=a_1(p,b_1,b_2)>0$ such that
$k(u)g(u)u^{-p_S}$ is nonincreasing if $a<a_1$, hence $a\in(-a_0,a_1)$ implies
$u\equiv v\equiv0$ due to \cite{LZ03} and Theorem~\ref{thmU}.
\end{example}

\begin{example} \rm
Consider the system
$$ 
\begin{aligned}
u_t-\Delta u &= k(u)v^p,\\
v_t-\Delta v &= k(v)u^p,
\end{aligned}
$$
where $n>2$, $p>1$ and $k(u)=1+bu^r$ with $b>0$ and $r\in(0,p]$, $r+p\leq p_S$. 
Theorem~\ref{thm-proportional} implies that
any bounded nonnegative solution $U=(u,v)$ in $\R^n\times\R$ satisfies $u\equiv v$,
and \cite{LZ03} with Theorem~\ref{thmU}
imply $u\equiv v\equiv0$.
Since the required Liouville theorem is also true in the case of the function
$\fzero(U)=(v^p,u^p)$, Theorem~\ref{thmUB}(ii) implies the
universal estimates \eqref{UB} for bounded solutions. 
\end{example}

\section{Proof of Theorem~\ref{thmU}} 
\label{sec-proof}

First notice that \cite[Lemma~7.3]{QS24} 
implies
\begin{equation} \label{fp0}
(\forall\theta>0)\,(\exists\lambda_\theta>0)\quad
\lambda\in(0,\lambda_\theta)\ \Rightarrow\ 
 \lambda^{p+\theta}\leq f^+(\lambda)\leq\lambda^{p-\theta}.
\end{equation}
 
Assume to the contrary that there exists
a nontrivial bounded solution $U$ of \eqref{eq-U} satisfying $U(\cdot,t)\in{\mathcal K}$ for all $t$.
Then there exists $D>0$ such that
\begin{equation} \label{bound-u}
 |U(x,t)|+|\nabla U(x,t)|\leq D \qquad\hbox{ for all }x\in\R^n,\ t\in\R.
\end{equation}

We may also assume $U(0,0)\ne0$.

Set $\beta:=1/(p-1)$.
Let $D$ and $\eta$ be as in \eqref{bound-u} and \eqref{F2}, respectively. 
By $C,C_0,C_1,\dots,$\allowbreak$c,c_0,c_1,\dots$ we will denote
positive constants which depend only on $n,p,D,\eta$ and $f$;
the constants $C,c$ may vary from step to step.
Similarly by $\eps$ and $\delta$ we will denote
small positive constants which may vary from step to step
and which only depend on $n,p,D,\eta$ and $f$.
Finally, $M=M(n,p)$ will denote a positive integer
(the number of bootstrap steps). 
The proof will be divided into several steps.

\goodbreak
{\bf Step 1: Initial estimates.}
For $a\in\R^n$ and $k=1,2,\dots$ we set
$$W(y,s)=W_k^a(y,s):=(k-t)^\beta U(y\sqrt{k-t}+a,t),\qquad\hbox{where }\  s=-\log(k-t),\ \ t<k.$$
Set also $s_k:=-\log k$ and notice that
$W=W_k^a$ solves the problem
\begin{equation} \label{eq-w}
\left.\begin{aligned}
 W_s &=\Delta W-\frac12 y\cdot\nabla W-\beta W+\phi^{-p}f(\phi W) \\
     &=\frac1\rho\nabla\cdot(\rho\nabla W)-\beta W+\phi^{-p}f(\phi W)\qquad \hbox{in }
\R^n\times\R,
\end{aligned}\quad\right\}
\end{equation}
where $\phi=\phi(s):=e^{\beta s}$ and $\rho=\rho(y):=e^{-|y|^2/4}$. In addition,
$$  W_k^a(0,s_k)=k^\beta U(a,0),$$  
and
\begin{equation} \label{bound-w2}
\left.
\begin{aligned}
 \|W_k^a(\cdot,s)\|_\infty &\leq   C_0k^\beta  \\
  \|\nabla W_k^a(\cdot,s)\|_\infty &\leq   C_0k^{\beta+1/2} 
\end{aligned}
\ \right\}
\quad \hbox{for }\
s\in[s_k-M-1,\infty), 
\end{equation}
where $C_0:=De^{(M+1)(\beta+1/2)}$. 

Set
$$
E_k^a(s) :=\frac12\int_{\R^n}\bigl(|\nabla W_k^a|^2+\beta|W_k^a|^2\bigr)(y,s)\rho(y)\,dy
 -\phi^{-(p+1)}(s)\int_{\R^n}F(\phi W_k^a)(y,s)\rho(y)\,dy.
$$
Then, supressing the dependence on $k,a$ in our notation, we obtain
\begin{equation} \label{GK1}
\begin{aligned}
\frac12\frac{d}{ds}\int_{\R^n}&|W|^2(y,s)\rho(y)\,dy  
=-2E(s)\\
&+\phi^{-(p+1)}(s)\int_{\R^n}[f(\phi W)\cdot\phi W-2F(\phi W)](y,s)\rho(y)\,dy 
\end{aligned}
\end{equation}
and
\begin{equation} \label{GK2}
\begin{aligned}
E_s(s)=
\frac{d}{ds}E(s)&=-\int_{\R^n}|W_s|^2(y,s)\rho(y)\,dy \\
 &+\beta\phi^{-(p+1)}(s)\int_{\R^n}[(p+1)F(\phi W)-f(\phi W)\cdot\phi W](y,s)\rho(y)\,dy. 
\end{aligned}
\end{equation}
Consequently, setting 
$${\mathcal E}(s)={\mathcal E}^a_k(s):=E(s)-\frac{C_E}2\int_{\R^n}|W|^2(y,s)\rho(y)\,dy, \quad
\hbox{ where }\ C_E:=\frac{p-1-\eta}{\eta(p-1)}, $$
and using \eqref{F2} we obtain
$$ {\mathcal E}_s(s)\leq -\int_{\R^n}|W_s|^2(y,s)\rho(y)\,dy+2C_EE(s), $$
hence 
$$ {\mathcal E}_s(s)-2C_E{\mathcal E}\leq -\int_{\R^n}|W_s|^2(y,s)\rho(y)\,dy
     +C_E^2\int_{\R^n}|W|^2(y,s)\rho(y)\,dy, $$
which implies
\begin{equation} \label{dEs}
\begin{aligned}
 (e^{2C_E(s_k-s)}{\mathcal E}(s))_s \leq&
 -e^{2C_E(s_k-s)}\int_{\R^n}|W_s|^2(y,s)\rho(y)\,dy \\
 &+C_E^2e^{2C_E(s_k-s)}\int_{\R^n}|W|^2(y,s)\rho(y)\,dy.
\end{aligned}
\end{equation}
Assuming $\sigma_1,\sigma_2\in[s_k-M-1,s_k+M+1]$,  
\eqref{dEs} guarantees the existence of $C_1=C_1(n,p,M,D,\eta)\geq1$ such that
\begin{equation} \label{estWs}
\int_{\sigma_1}^{\sigma_2}\int_{\R^n}|W_s|^2(y,s)\rho(y)\,dy\,ds
\leq C_1\Bigl(|{\mathcal E}(\sigma_2)|+|{\mathcal E}(\sigma_1)|
+\int_{\sigma_1}^{\sigma_2}\int_{\R^n}|W|^2(y,s)\rho(y)\,dy\,ds\Bigr).
\end{equation}
Set 
\begin{equation}\label{c0}
c_0:=\frac1{4C_1}\leq\frac12,\qquad d:=\frac12 c_0,
\end{equation}
$$\varphi(\sigma):=\int_{\sigma_1}^\sigma\int_{\R^n}|W|^2(y,s)\rho(y)\,dy,\quad
\psi:=\int_{\sigma_1}^{\sigma_2}\int_{\R^n}|W_s|^2(y,s)\rho(y)\,dy,$$
and assume $\sigma_2-\sigma_1\leq c_0$.
If $\sigma\in[\sigma_1,\sigma_2]$, then
$$ \begin{aligned}
  0\leq\varphi'(\sigma) &=\int_{\R^n}|W|^2(y,\sigma)\rho(y)\,dy =\varphi'(\sigma_1)
  +2\int_{\sigma_1}^\sigma\int_{\R^n}W\cdot W_s(y,s)\rho(y)\,dy \\
  &\leq \varphi'(\sigma_1)+2\sqrt{\psi}\sqrt{\varphi(\sigma_2)}
  \leq\varphi'(\sigma_1)+\psi+\varphi(\sigma_2),
\end{aligned}$$
hence 
$$ \varphi(\sigma_2)\leq c_0(\varphi'(\sigma_1)+\psi+\varphi(\sigma_2)). $$
This estimate and \eqref{c0} yield
$$ \begin{aligned}
\int_{\sigma_1}^{\sigma_2}\int_{\R^n}&|W|^2(y,s)\rho(y)\,dy\,ds =
 \varphi(\sigma_2)\leq \frac{c_0}{1-c_0}(\varphi'(\sigma_1)+\psi) \\
 &\leq \int_{\R^n}|W|^2(y,\sigma_1)\rho(y)\,dy
 +\frac1{2C_1}\int_{\sigma_1}^{\sigma_2}\int_{\R^n}|W_s|^2(y,s)\rho(y)\,dy\,ds,
\end{aligned}$$
hence \eqref{estWs} guarantees 
\begin{equation} \label{estWs2}
\int_{\sigma_1}^{\sigma_2}\int_{\R^n}|W_s|^2(y,s)\rho(y)\,dy\,ds
\leq 2C_1\Bigl(|{\mathcal E}(\sigma_2)|+|{\mathcal E}(\sigma_1)|
+\int_{\R^n}|W|^2(y,\sigma_1)\rho(y)\,dy\Bigr).
\end{equation}

Notice that \eqref{bound-u} guarantees
\begin{equation} \label{fUp}
 |f(U)\cdot U|+|F(U)|\leq C,
\end{equation}
hence \eqref{bound-w2} and the definitions of $E(s)$ and $\phi$ imply
$|E(s)|\leq Ck^{(p+1)\beta}$ for $s\in[s_k-M-1,s_k+M+1]$.
Consequently,
there exists $\tilde\sigma\in[s_k-(M+2)d,s_k-(M+1)d]$  such that
$$ 2dE(\tilde\sigma)\leq 2\int_{s_k-(M+2)d}^{s_k-(M+1)d}E(s)\,ds \leq Ck^{(p+1)\beta},$$
and similarly there exists
$\hat\sigma\in[s_k+Md,s_k+(M+1)d]$ such that
$$ 2dE(\hat\sigma)\geq-Ck^{(p+1)\beta}.$$
These estimates, integration of \eqref{GK2} and estimates \eqref{fUp}, \eqref{bound-w2} imply
$$
\int_{\tilde\sigma}^{\hat\sigma}\int_{\R^n}|W_s|^2(y,s)\rho(y)\,dy\,ds\leq Ck^{(p+1)\beta},
$$
hence
\begin{equation}\label{EM}
 \int_{s_k-(M+1)d}^{s_k+Md}\int_{\R^n}|W_s|^2(y,s)\rho(y)\,dy\,ds\leq Ck^{(p+1)\beta}.
\end{equation}

\goodbreak
{\bf Step 2: The plan of the proof.}

We divide the interval $[s_k-(M+1)d,s_k+Md]$
into $2M+1$ intervals 
$$I_j:=[s_k-(M+1-j)d,s_k-(M-j)d],\quad j=0,\dots,2M,$$
and we will use a bootstrap to improve estimate \eqref{EM}.
In the $m$-th bootstrap step ($m=1,2,\dots,M$) we will find 
$\sigma_j^m\in I_j$, $j=m-1,m,\dots,2M-m+1$, such that
\begin{equation} \label{boot1}
|{\mathcal E}(\sigma_j^m)|+\int_{\R^n}|W|^2(y,\sigma_j^m)\rho(y)\,dy\leq
Ck^{\gamma_m}, \quad j=m-1,m,\dots,2M-m+1,
\end{equation}
where 
$$ (p+1)\beta=:\gamma_0>\gamma_1>\gamma_2>\dots>\gamma_M, \quad
   \gamma_M<\mu:=2\beta-\frac{n-2}2. $$
This and \eqref{estWs2} yield
\begin{equation} \label{boot2}
 \int_{\sigma_j^m}^{\sigma_{j+1}^m}\int_{\R^n}|W_s|^2(y,s)\rho(y)\,dy\,ds\leq Ck^{\gamma_m},
\quad j=m-1,m,\dots,2M-m,
\end{equation}
hence
\begin{equation} \label{E}
 \int_{s_k-(M+1-m)d}^{s_k+(M-m)d}\int_{\R^n}|(W^a_k)_s|^2(y,s)\rho(y)\,dy\,ds\leq Ck^{\gamma_m}, 
\quad a\in\R^n,\ k\hbox{ large},
\end{equation}
where $m=1,2,\dots,M$,
and ``$k$ large'' means $k\geq k_0$ with $k_0=k_0(n,p,D,\eta,f,\fzero,U)$. 
Then, taking $\lambda_k:=k^{-1/2}$ and setting
\begin{equation} \label{Vk-1}
 V_k(z,\tau):=\lambda_k^{2\beta}W_k^a(\lambda_k z,\lambda_k^2\tau+s_k),
 \qquad z\in\R^n,\ -kd\leq\tau\leq0, 
\end{equation}
where $a=0$, 
we obtain
$|V_k|+|\nabla V_k|\leq C$, $V_k(0,0)=U(0,0)\ne0$,
$$ \frac{\partial V_k}{\partial\tau}-\Delta V_k-(\lambda_k^{2\beta}\phi^{-1})^pf(\lambda_k^{-2\beta}\phi V_k)
  =-\lambda_k^2\Bigl(\frac12 z\cdot\nabla V_k+\beta V_k\Bigr). $$
Notice that $\lambda_k^{-2\beta}\phi(s)=\phi(s-s_k)=e^{\beta\tau/k}\to1$ for fixed $\tau$.
Using \eqref{E} with $m=M$  we also have
\begin{equation} \label{estvtau}
\begin{aligned}
\int_{-k/d}^0\int_{|z|<\sqrt{k}}
 &\Big|\frac{\partial V_k}{\partial\tau}(z,\tau)\Big|^2\,dz\,d\tau
  =\lambda_k^{2\mu}
 \int_{s_k-d}^{s_k}\int_{|y|<1} 
\Big|\frac{\partial W_k^0}{\partial s}(y,s)\Big|^2\,dy\,ds \\
&\leq C k^{-\mu+\gamma_M}\to 0 \quad\hbox{as }\ k\to\infty.
\end{aligned}
\end{equation}
Now the same arguments as in \cite[pp.~18-19]{GK87} 
(cf.~also \cite[Remark 3]{Q21} and \cite{Q22})
show that
(up to a subsequence) the sequence $\{V_k\}$
converges to a nontrivial solution $V=V(z)\in{\mathcal K}$
of the problem $\Delta V+f(V)=0$ in $\R^n$,
which contradicts our assumption and
concludes the proof.

In our bootstrap argument below we fix $m\in\{1,2,\dots,M\}$, we assume that \eqref{E} is true 
with $m$ and $\gamma_m$ replaced by $m-1$ and $\gamma_{m-1}\in[\mu,\gamma_0]$, respectively 
(notice that this is true if $m=1$ due to \eqref{EM})
and we will show that \eqref{E} remains true with parameters $m$ and $\gamma_m$, 
provided $\gamma_{m}=\gamma_{m-1}-\delta$, where
$\delta=\delta(n,p)>0$ is small enough.
In other words, we will assume
\begin{equation} \label{Em}
 \int_{s_k-(M+2-m)d}^{s_k+(M+1-m)d}\int_{\R^n}|(W^a_k)_s|^2(y,s)\rho(y)\,dy\,ds\leq Ck^{\gamma}, 
\quad a\in\R^n,\ k\hbox{ large}
\end{equation}
(where $\gamma:=\gamma_{m-1}\in[\mu,\gamma_0]$)
and we will show that
\begin{equation} \label{Em1}
 \int_{s_k-(M+1-m)d}^{s_k+(M-m)d}\int_{\R^n}|(W^a_k)_s|^2(y,s)\rho(y)\,dy\,ds\leq Ck^{\gamma-\delta}.
\quad a\in\R^n,\ k\hbox{ large}
\end{equation}
Consequently, we can find $M=M(n,p)$ such that $\gamma_M<\mu\leq\gamma_{M-1}$:
In fact, $M$ is the smallest integer satsifying $M>n/(2\delta)$.

{\bf Step 3: Notation and auxiliary results.}
In the rest of the proof we will also use the following notation
and facts:
Set 
$$C(M):=8ne^{M+1}, \ \ B_r(a):=\{x\in\R^n:|x-a|\leq r\}, \ \ B_r:=B_r(0), \ \ 
   R_k:=\sqrt{8n\log k}.$$
Given $a\in\R^n$, there exists an integer $X=X(k)$ and
there exist $a^1,a^2,\dots a^X\in\R^n$ (depending on $a,n,p,k$) such that
$a^1=a$, $X\leq C(\log k)^{n/2}$ and
\begin{equation} \label{B}
 D^k(a):=B_{\sqrt{C(M)k\log(k)}}(a)\subset\bigcup_{i=1}^X B_{e^{-M/8}\sqrt{k}/2}(a^i).
\end{equation}
Notice that if $y\in B_{R_k}$ and $s\in[s_k-(M+1)d,s_k+Md]$,
then $a+ye^{-s/2}\in D^k(a)$, hence
\eqref{B} guarantees the existence of $i\in\{1,2,\dots,X\}$
such that 
\begin{equation} \label{yyi}
W^a_k(y,s)=W^{a^i}_k(y^i,s),\qquad\hbox{where}\quad
 y^i:=y+(a-a^i)e^{s/2}\in B_{1/2}.
\end{equation}

The contradiction argument in Step~2 based on 
the nonexistence of positive stationary solutions of \eqref{eq-U}
and on estimate
\eqref{estvtau}, combined with a doubling argument,
can also be used to obtain the following
useful pointwise estimates of the solution $u$
assuming a suitable $L^2$ bound on the time derivative in a cylinder.

\begin{lemma} \label{lem-decay}
Let $M,d,s_k,W^a_k$ be as above, $\zeta\in\R$, $\xi,C^*>0$,
$d_k\in(0,d]$ and $r_k\in(0,1]$, $k=1,2,\dots$.
Set
$$ {\mathcal T}_k:=\Bigl\{(a,\sigma,b)\in\R^n\times[s_k-Md,s_k+Md]\times\R^n:
  \int_{\sigma-d_k}^{\sigma}\int_{B_{r_k}(b)} |(W^a_k)_s|^2 dy\,ds \leq C^*k^\zeta
\Bigr\}. $$ 
Assume
\begin{equation} \label{xizeta}
 \xi\frac\mu\beta>\zeta\quad{and}\quad
 \frac1{\log(k)}\min(d_kk^{\xi/\beta_k},r_kk^{\xi/2\beta_k})\to\infty\ 
 \hbox{ as }\ k\to\infty,
\end{equation}
whenever $\beta_k>0$, $\beta_k\to\beta$ as $k\to\infty$.
Then there exists $\tilde k_0$ such that
$$ (|W^a_k|+|\nabla W^a_k|^{2/(p+1)})(y,\sigma)\leq k^\xi \quad\hbox{whenever}\quad
 y\in B_{r_k/2}(b), \ \  k\geq \tilde k_0 \ 
\hbox{ and } \ (a,\sigma,b)\in{\mathcal T}_k.$$ 
\end{lemma}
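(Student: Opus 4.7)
The proof will proceed by contradiction, doubling, and rescaling.
Suppose the conclusion fails; then, along a subsequence, there exist $(a_k,\sigma_k,b_k)\in{\mathcal T}_k$ and $y_k\in B_{r_k/2}(b_k)$ with
$$
M_k(y_k,\sigma_k)>k^\xi,\qquad M_k(y,s):=|W^{a_k}_k(y,s)|+|\nabla W^{a_k}_k(y,s)|^{2/(p+1)}.
$$
The choice of $M_k$ is dictated by the natural scaling of equation~\eqref{eq-w}: under $W\mapsto A\,V(\lambda^{-1}\cdot,\lambda^{-2}\cdot)$ with $\lambda=A^{-1/(2\beta)}$, one checks that $M_V(0,0)=A^{-1}M_W(y_0,s_0)$, so choosing $A=A_k\approx k^\xi$ will produce a limit of unit $M$-size.

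The first step will be a parabolic doubling argument (in the spirit of \cite{PQS07b}) applied to $M_k$ on the parabolic cylinder $[\sigma_k-d_k,\sigma_k]\times B_{r_k}(b_k)$. This will yield a point $(\tilde y_k,\tilde\sigma_k)$ with $M_k(\tilde y_k,\tilde\sigma_k)\geq k^\xi$ and the semi-maximality
$$
M_k(y,s)\leq 2M_k(\tilde y_k,\tilde\sigma_k)\quad\hbox{on}\quad [\tilde\sigma_k-(L_k\nu_k)^2,\tilde\sigma_k]\times B_{L_k\nu_k}(\tilde y_k),
$$
with $L_k\to\infty$ of order $\sqrt{\log k}$, where $\nu_k:=M_k(\tilde y_k,\tilde\sigma_k)^{-1/(2\beta_k)}$ and $\beta_k$ will be calibrated in the next step. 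The logarithmic factors appearing in \eqref{xizeta} are precisely what ensures that this doubling cylinder sits inside $[\sigma_k-d_k,\sigma_k]\times B_{r_k}(b_k)$, so that the $L^2$-control on $W_s$ from ${\mathcal T}_k$ still applies there.

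Next, I will absorb $\tilde y_k$ into $a_k$ by translation (so effectively $\tilde y_k=0$), set $A_k:=M_k(0,\tilde\sigma_k)\in[k^\xi,C_0k^\beta]$, and calibrate $\beta_k$ by $\nu_k^{-2}:=f^+(\phi(\tilde\sigma_k)A_k)\,\phi(\tilde\sigma_k)^{-p}\,A_k^{-1}$ so that the coefficient of the nonlinear term is normalized to $1$ after rescaling; estimate \eqref{fp0} then forces $\beta_k\to\beta$. Defining
$$
V_k(z,\tau):=A_k^{-1}W^{a_k}_k(\nu_k z,\tilde\sigma_k+\nu_k^2\tau),
$$
one has $0\leq V_k\leq 2$ on $[-L_k^2,0]\times B_{L_k}$, and $V_k$ solves
$$
\partial_\tau V_k=\Delta_z V_k-\tfrac12\nu_k^2\,z\cdot\nabla_z V_k-\beta\nu_k^2V_k+\Psi_k(V_k),
$$
with $\Psi_k(V):=\nu_k^2A_k^{-1}\phi^{-p}f(\phi A_k V)$ and with first-order and zero-order coefficients of size $\nu_k^2\to 0$. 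Passing to a further subsequence so that $\phi(\tilde\sigma_k)A_k\to c^*\in[0,C_0]$, assumption \eqref{F1} (when $c^*=0$) or continuity of $f$ (when $c^*>0$) will yield $\Psi_k\to\tilde f$ locally uniformly, with $\tilde f\in\{\fzero,\,\hbox{const}\cdot f(c^*\cdot)\}$.

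The ${\mathcal T}_k$ bound on $W_s$ translates, after change of variables, into
$$
\int_{-T}^0\!\!\int_{B_R}|\partial_\tau V_k|^2\,dz\,d\tau=A_k^{-2}\nu_k^{2-n}\!\!\int\!\!\int|W_s|^2\,dy\,ds\leq C^*k^{\zeta-2\xi+\xi(n-2)/(2\beta_k)},
$$
whose exponent tends to $\zeta-\xi\mu/\beta<0$ by the assumption $\xi\mu/\beta>\zeta$; hence $\partial_\tau V_k\to 0$ in $L^2_{\rm loc}$. Parabolic regularity together with a diagonal extraction will then produce a $C^{1,\alpha}_{\rm loc}$ subsequential limit $V$ on $\R^n\times(-\infty,0]$ which is nonnegative, bounded, time-independent, and nontrivial (since $M_V(0,0)=1$ by construction). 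So $V$ is a nontrivial nonnegative bounded classical solution of $-\Delta V=\tilde f(V)$ in $\R^n$, contradicting the Liouville hypotheses on $f$ and $\fzero$ in Theorem~\ref{thmU}. The hard part will be the doubling-rescaling calibration: verifying that the doubling cylinder fits inside the cylinder where $W_s$ is $L^2$-controlled (which is exactly where the logarithmic factor in \eqref{xizeta} enters) and that the calibration $\beta_k$ simultaneously normalizes the nonlinear coefficient and makes the exponent $\zeta-2\xi+\xi(n-2)/(2\beta_k)$ pass below $0$ uniformly in $k$.
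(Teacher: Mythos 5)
Your plan has the same global architecture as the paper's proof (contradiction, find a semi-maximal point, calibrate $\beta_k$ from the value $z_k=\phi(\hat s_k)\tilde W_k$ via \eqref{fp0}, translate the ${\mathcal T}_k$ control into an $L^2$ smallness of $\partial_\tau V_k$, pass to a limit and contradict the elliptic Liouville hypotheses). Your calibration $\nu_k^{-2}=f^+(\phi A_k)\phi^{-p}A_k^{-1}$ is exactly equivalent to the paper's choice of $\lambda_k$, and the exponent computation $\zeta-2\xi+\xi(n-2)/(2\beta_k)\to\zeta-\xi\mu/\beta<0$ is right.

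The step you leave to the ``doubling lemma in the spirit of \cite{PQS07b}'' is, however, the place where there is a genuine difficulty, and it is exactly where the paper does something different. The abstract doubling lemma applied to $M_k$ gives a semi-maximality neighbourhood of radius $\sim j/M_k$, whereas the natural rescaling for \eqref{eq-w} uses $\nu_k=M_k^{-1/(2\beta_k)}$; since $M_k$ does not scale like an inverse length, $j/M_k$ and $j\nu_k$ differ by the factor $M_k^{1/(2\beta_k)-1}$, which can tend to $0$ when $\beta_k>1/2$ (i.e.\ $p<3$). One could instead apply the doubling lemma to $M_k^{1/(2\beta_k)}$ to get radius $j\nu_k$ directly, but $\beta_k$ is only determined \emph{after} the semi-maximal point $(\hat y_k,\hat s_k)$ is known (through $z_k=\phi(\hat s_k)\tilde W_k$), so this is circular; and using $M_k^{1/(2\beta)}$ with the fixed $\beta$ leaves a discrepancy $M_k^{1/(2\beta_k)-1/(2\beta)}$, which \eqref{fp0} only controls by a sub-polynomial factor in $k$, not well enough to guarantee that the rescaled cylinder grows when $j\sim\sqrt{\log k}$. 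The paper avoids all of this by doing a \emph{direct dyadic exhaustion} (following \cite[Lemma 10]{Q22}): choose an integer $K\le C\log k$ with $2^Kk^\xi>2C_0k^\beta$, build nested cylinders $Z_0\subset\cdots\subset Z_K$ of fixed sizes $r_k/(2K)$ and $d_k/(2K)$, and use a pigeonhole argument to find $q^*$ with $2\sup_{Z_{q^*}}\tilde w_k\ge\sup_{Z_{q^*+1}}\tilde w_k$. The key point is that the semi-maximality cylinder then has size $r_k/(2K)\times d_k/(2K)$ \emph{independent of} $\lambda_k$ and $\beta_k$; after rescaling by $\lambda_k\le k^{-\xi/(2\beta_k)}$, its dimensions are $\ge r_kk^{\xi/(2\beta_k)}/(C\log k)$ and $\ge d_kk^{\xi/\beta_k}/(C\log k)$, which blow up precisely by the second part of \eqref{xizeta} --- and this is exactly why the hypothesis is phrased to hold for \emph{all} sequences $\beta_k\to\beta$. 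You should replace the doubling lemma by this dyadic construction. A second, more minor point: in your case $\phi A_k\to c^*>0$, the limit is $\Delta W+f(c^*W)/f^+(c^*)=0$, which is not directly $-\Delta V=f(V)$; the paper absorbs $c^*$ into the definition of $\lambda_k$ ($\lambda_k:=(\tilde W_k/c_\lambda)^{-1/(2\beta_k)}$) so that $\lambda_k^{-2\beta_k}\phi(\hat s_k)\to1$ and the limit is exactly $-\Delta V=f(V)$; your version needs one extra rescaling $W\mapsto c^*W(\cdot\sqrt{f^+(c^*)/c^*})$ to land on the hypothesized equation.
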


\begin{proof}
The proof is a modification of the proof of
\cite[Lemma~10]{Q22}. 
Assume to the contrary that there exist $k_1,k_2\dots$ with the following properties:
$k_j\to\infty$ as $j\to\infty$, and for each $k\in\{k_1,k_2,\dots\}$ there exist
$(a_k,\sigma_k,b_k)\in{\mathcal T}_k$ and $y_k\in B_{r_k/2}(b_k)$ such that
$\tilde w_k(y_k,\sigma_k)>k^\xi$, where 
$\tilde w_k:=|W^{a_k}_k|+|\nabla W^{a_k}_k|^{2/(p+1)}$.

Given $k\in\{k_1,k_2\,\dots\}$, we can choose a positive integer $K$ such that
\begin{equation} \label{Klemma}
  2^Kk^{\xi}>2C_0k^\beta, \qquad K<C\log k.
\end{equation}
Set
$$ Z_q:=B_{r_k(1/2+q/(2K))}(b_k)\times[\sigma_k-d_k(1/2+q/(2K)),\sigma_k],
 \quad q=0,1,\dots,K.$$
Then
$$ B_{r_k/2}(b_k)\times[\sigma_k-d_k/2,\sigma_k]=Z_0\subset Z_1\subset\dots\subset Z_K=B_{r_k}(b_k)\times[\sigma_k-d_k,\sigma_k].$$
Since $\sup_{Z_0}\tilde w_k\geq \tilde w_k(y_k,\sigma_k)>k^{\xi}$,
estimates \eqref{Klemma} and \eqref{bound-w2} imply the existence of $q^*\in\{0,1,\dots K-1\}$ such that
$$ 2\sup_{Z_{q^*}}\tilde w_k \geq \sup_{Z_{q^*+1}}\tilde w_k $$
(otherwise $2C_0k^\beta\geq\sup_{Z_K}\tilde w_k>2^K\sup_{Z_0}\tilde w_k>2^K k^\xi$, a contradiction).
Fix $(\hat y_k,\hat s_k)\in Z_{q^*}$ such that
$$ \tilde W_k:=\tilde w_k(\hat y_k,\hat s_k)=\sup_{Z_{q^*}}\tilde w_k.$$
Then $\tilde W_k\geq  k^{\xi}$,
$$ \hat Q_k:=B_{r_k/(2K)}(\hat y_k)\times\Bigl[\hat s_k-\frac{d_k}{2K},\hat s_k\Bigr]\subset Z_{q^*+1},$$
and $\tilde w_k\leq 2\tilde W_k$ on $\hat Q_k$.

Set $z_k:=\tilde W_k\phi(\hat s_k)$.
Since $\tilde W_k\leq Ck^{\beta}$ due to \eqref{bound-w2}
and $\phi(\hat s_k)\leq Ck^{-\beta}$, we have either
$z_k\to 0$ or
there exists a subsequence (which will be labeled the same) such that
$z_k\to c_\lambda>0$.

If $z_k\to c_\lambda>0$, then
we set $\lambda_k:=(\tilde W_k/c_\lambda)^{-1/(2\beta_k)}$, where $\beta_k:=\beta$
(hence $\lambda_k^{-2\beta_k}\phi(\hat s_k)=z_k/c_\lambda\to1$ and $\lambda_k\to0$).

If $z_k\to0$, then we set $c_\lambda:=1$,
$$ \lambda_k:=\tilde W_k^{-1/(2\beta)}\Bigl(\frac{z_k^p}{f^+(z_k)}\Bigr)^{1/2}$$
and $\beta_k$ is defined by $\lambda_k=\tilde W_k^{-1/(2\beta_k)}$
(hence $\lambda_k^{-2\beta_k}\phi(\hat s_k)=z_k\to0$).
Notice that \eqref{fp0} with $\theta$ small enough and the inequalities
$\tilde W_k\geq k^\xi$, $\phi(\hat s_k)\geq ck^{-\beta}$ imply $\lambda_k\to0$.
We also have $\beta_k\to\beta$. In fact, assume to the contrary
that $\alpha_k:=\frac1\beta-\frac1{\beta_k}\not\to0$,
We may assume $|\alpha_k|\geq \alpha$ for some $\alpha>0$.
Choose $\theta>0$ such that $\theta(\beta-\xi)<\xi\alpha$. Then \eqref{fp0} implies
$$ \tilde W_k^{\alpha_k}=\frac{z_k^p}{f^+(z_k)}\in[z_k^\theta,z_k^{-\theta}]
\quad\hbox{ for $k$ large,} $$
which contradicts the inequalities
$\tilde W_k\geq k^\xi$ and $ck^{-\beta}\leq\phi(\hat s_k)\leq Ck^{-\beta}$.

Set 
\begin{equation} \label{Vk-2}
 V_k(z,\tau):=\lambda_k^{2\beta_k}W^{a_k}_k(\lambda_k z+\hat y_k,\lambda_k^2\tau+\hat s_k).
\end{equation}
Then $V_k(\cdot,\tau)\in{\mathcal K}$,
$(|V_k|+|\nabla V_k|^{2/(p+1)})(0,0)=c_\lambda$, 
 $|V_k|+|\nabla V_k|^{2/(p+1)}\leq2c_\lambda$ 
on $Q_k:=B_{r_k/(2K\lambda_k)}\times[-d_k/(2K\lambda_k^2),0]$, and
\begin{equation} \label{eqvk} 
  \frac{\partial V_k}{\partial\tau}-\Delta V_k
  -\lambda_k^{2\beta_k+2}\phi^{-p}f(\lambda_k^{-2\beta_k}\phi V_k)
  =-\lambda_k^2\Bigl(\frac12 z\cdot\nabla V_k+\beta V_k\Bigr) \quad\hbox{on }\ Q_k,
\end{equation}
where $\phi=\phi(s)=e^{s/(p-1)}=\phi(\hat s_k)e^{\tau\lambda_k^2/p}\to \phi(\hat s_k)$
for fixed $\tau$.
Notice also that
$$\lambda_k^{2\beta_k+2}\phi^{-p}(\hat s_k)=[f^+(\lambda_k^{-2\beta_k}\phi(\hat s_k)]^{-1}.$$
In addition, as $k\to\infty$,
$$
  \frac{r_k}{2K\lambda_k} \geq\frac{r_k k^{\xi/(2\beta_k)}}{C\log(k)}\to\infty, \quad
  \frac{d_k}{2K\lambda_k^2} \geq\frac{d_k k^{\xi/\beta_k}}{C\log(k)}\to\infty.
$$
Since $(a_k,\sigma_k,b_k)\in{\mathcal T}_k$ and $\hat Q_k\subset Z_K$, we obtain
$$
\int_{Q_k}\Big|\frac{\partial V_k}{\partial\tau}(z,\tau)\Big|^2\,dz\,d\tau
  =\lambda_k^{2\mu}\int_{\hat Q_k}
\Big|\frac{\partial W^{a_k}_k}{\partial s}(y,s)\Big|^2\,dy\,ds \leq C^*k^\delta,
\quad\hbox{where }\ \delta:=-\xi\frac\mu\beta+\zeta <0.
$$
Hence, as above, a suitable subsequence of $\{V_k\}$
converges to a nontrivial solution $V\in{\mathcal K}$
of either the problem $\Delta V+\fzero(V)=0$ in $\R^n$ 
(if $\lambda_k^{-2\beta_k}\phi(\hat s_k)\to0$; cf.~\eqref{F1}),
or the problem $\Delta V+f(V)=0$ in $\R^n$ 
(if $\lambda_k^{-2\beta_k}\phi(\hat s_k)\to1$).
This contradicts our assumptions.
\end{proof}

Recall that $\gamma\in[\mu,(p+1)\beta]$.

\begin{Lemma} \label{lemmaG}
Let ${\mathcal T}_k={\mathcal T}_k(d_k,r_k,\zeta,C^*)$ be as in Lemma~\ref{lem-decay},
$\omega\in\R$, $\eps,C^*>0$,
\begin{equation} \label{ass-rep}
 0\leq\alpha<\frac\xi\beta,\qquad \xi\frac\mu\beta>\gamma-\alpha+\eps-\omega, 
\end{equation}
and assume that 
\begin{equation} \label{assG}
 \hbox{$(a,\sigma,0)\in {\mathcal T}_k(\frac d2 k^{-\alpha},1,\gamma-\alpha+\eps,C^*)$\ \ for $k$ large}.
\end{equation}
Set 
$$G:=\{y\in B_{1/2}: (|W^a_k|+|\nabla W^a_k|^{2/(p+1)})(y,\sigma)\leq k^\xi\}.$$
Then 
\begin{equation} \label{Gc}
|B_{1/2}\setminus G|\leq Ck^{\omega-n\alpha/2}\ \hbox{ for $k$ large}.
\end{equation}
\end{Lemma}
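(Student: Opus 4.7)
The strategy is to apply Lemma~\ref{lem-decay} in its contrapositive form: at any point where the pointwise bound defining $G$ fails, a suitably localized $L^2$ integral of $(W^a_k)_s$ must be large. Summing such lower bounds over a disjoint family of small balls and comparing with the global upper bound \eqref{assG} will then control the cardinality of the family, hence the measure of the exceptional set.

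Concretely, I apply Lemma~\ref{lem-decay} with the choices
$$d_k:=\tfrac{d}{2}k^{-\alpha},\qquad r_k:=\tfrac{1}{2}k^{-\alpha/2},\qquad \zeta:=\gamma-\alpha+\eps-\omega.$$
The first part of \eqref{ass-rep}, namely $\alpha<\xi/\beta$, ensures that $d_k k^{\xi/\beta_k}=\tfrac{d}{2}k^{\xi/\beta_k-\alpha}$ and $r_k k^{\xi/(2\beta_k)}=\tfrac{1}{2}k^{\xi/(2\beta_k)-\alpha/2}$ grow like positive powers of $k$ whenever $\beta_k\to\beta$, which gives the growth condition in \eqref{xizeta}; the second part of \eqref{ass-rep} is exactly the remaining requirement $\xi\mu/\beta>\zeta$.

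For any $y_0\in B_{1/2}\setminus G$, the pointwise bound in the conclusion of Lemma~\ref{lem-decay} fails at $y_0\in B_{r_k/2}(y_0)$, so by contraposition $(a,\sigma,y_0)\notin\mathcal T_k(d_k,r_k,\zeta,C^*)$; that is,
$$\int_{\sigma-d_k}^{\sigma}\int_{B_{r_k}(y_0)}|(W^a_k)_s|^2\,dy\,ds>C^*k^\zeta.$$
By a standard maximal disjoint (Vitali-type) argument I select $\{b_j\}\subset B_{1/2}\setminus G$ so that the balls $B_{r_k}(b_j)$ are pairwise disjoint and the collection is maximal with this property; then $B_{1/2}\setminus G\subset\bigcup_j B_{2r_k}(b_j)$, and each $B_{r_k}(b_j)$ is contained in $B_1$ since $r_k\le 1/2$. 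Summing the previous lower bound over $j$ and invoking \eqref{assG} yields
$$C^*k^\zeta\,\#\{j\}\le \int_{\sigma-d_k}^{\sigma}\int_{B_1}|(W^a_k)_s|^2\,dy\,ds\le C^*k^{\gamma-\alpha+\eps},$$
so $\#\{j\}\le k^{\gamma-\alpha+\eps-\zeta}=k^\omega$. Finally
$$|B_{1/2}\setminus G|\le C(n)(2r_k)^n\,\#\{j\}\le Ck^{\omega-n\alpha/2},$$
which is \eqref{Gc}. The only delicate step is the verification of \eqref{xizeta} for our particular choices of $d_k,r_k,\zeta$; once this is in place, the covering and summation steps are routine.
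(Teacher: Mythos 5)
Your proof is correct and follows essentially the same strategy as the paper: apply Lemma~\ref{lem-decay} (in contrapositive form) at the scales $d_k=\tfrac d2 k^{-\alpha}$, $r_k\sim k^{-\alpha/2}$, $\zeta=\gamma-\alpha+\eps-\omega$, then cover the exceptional set by $O(k^{n\alpha/2})$ balls of radius $\sim k^{-\alpha/2}$, count how many can be ``bad'' using \eqref{assG}, and multiply by the ball volume. The only cosmetic difference is that you use a Vitali-type maximal disjoint family centered in $B_{1/2}\setminus G$ (so the disjointness replaces the bounded-overlap constant $C_n$), whereas the paper uses a fixed bounded-overlap covering of $B_{1/2}$ and separates indices into good and bad; both are routine and yield the same bound.
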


\begin{proof}
There exist 
$b^1,\dots,b^{Y}\in B_{1/2}$ with $Y\leq Ck^{n\alpha/2}$
such that 
$$ B_{1/2}\subset \bigcup_{j=1}^{Y}B^j,\quad\hbox{where}\quad
  B^j:=B_{\frac12k^{-\alpha/2}}(b^j), $$
and 
\begin{equation} \label{multiynew}
 \#\{j: y\in B_{k^{-\alpha/2}}(b^j)\}\leq C_n\quad\hbox{for any }\ y\in\R^n.
\end{equation}
Set
$$ \begin{aligned}
  H &:=\bigl\{j\in\{1,2,\dots,Y\}: 
  (a,\sigma,b^j)\in{\mathcal T}_k(\hbox{$\frac d2$}k^{-\alpha},k^{-\alpha/2},\gamma-\alpha+\eps-\omega,C^*C_n)\bigr\}, \\
  H^c &:=\{1,2,\dots,Y\}\setminus H.
\end{aligned}
$$
If $j\in H$, then \eqref{ass-rep} implies 
\eqref{xizeta}, hence Lemma~\ref{lem-decay} 
guarantees 
$$(|W^a_k|+|\nabla W^a_k|^{2/(p+1)})(y,\sigma)\leq k^\xi\ \hbox{ for }\ y\in B^j.$$
Consequently, 
$$B_{1/2}\cap\bigcup_{j\in H}B^j \subset G, \ \hbox{ hence } \
  B_{1/2}\setminus G \subset \bigcup_{j\in H^c}B^j. $$
Now \eqref{assG}, the definition of $H$ and \eqref{multiynew} imply
$\#H^c< Ck^\omega$, 
hence \eqref{Gc} is true.
\end{proof}

\begin{Lemma} \label{lem-ineq}
Fix a positive integer $L=L(n,p)$ such that
\begin{equation} \label{estL}
 \frac\mu{p+1}\Bigl(\frac{2p}{p+1}\Bigr)^L>\beta.
\end{equation}
There exists $\delta=\delta(n,p)>0$ with the following properties:
If $\gamma\in[\mu,(p+1)\beta]$ and $\eps>0$ is small enough, then there exist
$\xi_\ell,\alpha_\ell,\omega_\ell$, $\ell=1,2,\dots L$, 
such that 
\begin{equation} \label{estxi}
\xi_1\leq\xi_2\leq\dots\leq\xi_L\leq\beta=:\xi_{L+1},
\end{equation}
$(p+1)\xi_1\leq\gamma-2\delta$,
and the following inequalities are true for $\ell=1,2,\dots,L$:
\begin{equation} \label{bootcond}
0\leq\alpha_\ell <\frac{\xi_\ell}\beta, \quad \xi_\ell\frac\mu\beta>\gamma-\alpha_\ell+\eps-\omega_\ell, \quad
\omega_\ell-\frac{n\alpha_\ell}2\leq\gamma-2\delta-(p+1)\xi_{\ell+1}.
\end{equation}
\end{Lemma}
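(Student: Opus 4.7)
The plan is to view \eqref{bootcond} as solvability conditions on $(\alpha_\ell,\omega_\ell)$ given $\xi_\ell,\xi_{\ell+1}$, eliminate $\alpha_\ell,\omega_\ell$ to obtain a recursion on $\{\xi_\ell\}$ alone, and then construct the sequence backwards from $\xi_{L+1}=\beta$ using a fixed geometric expansion. Fix $\ell$: the second and third inequalities in \eqref{bootcond} rewrite as $\omega_\ell>\gamma-\alpha_\ell+\eps-\xi_\ell\mu/\beta$ and $\omega_\ell\le\gamma-2\delta-(p+1)\xi_{\ell+1}+n\alpha_\ell/2$ respectively, so an admissible $\omega_\ell$ exists iff
\[
(p+1)\xi_{\ell+1}<\xi_\ell\frac{\mu}{\beta}+\alpha_\ell\Bigl(1+\frac{n}{2}\Bigr)-\eps-2\delta.
\]
The constraint $\alpha_\ell<\xi_\ell/\beta$ forces us to push $\alpha_\ell$ close to $\xi_\ell/\beta$; using $\mu=2\beta-(n-2)/2$ and $1/\beta=p-1$, a direct computation yields the key algebraic identity
\[
\mu+1+\tfrac{n}{2}=2\beta+2,\qquad\text{hence}\qquad\frac{\mu+1+n/2}{\beta}=2+\frac{2}{\beta}=2p.
\]
Thus, up to an arbitrarily small slack $\eta>0$, the solvability condition reduces to $\xi_{\ell+1}\le\tau\xi_\ell-c$, where $\tau:=2p/(p+1)>1$ and $c:=(\eps+2\delta+\eta)/(p+1)$.

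I then define $\{\xi_\ell\}$ by downward recursion: set $\xi_{L+1}:=\beta$ and $\xi_\ell:=(\xi_{\ell+1}+c)/\tau$ for $\ell=L,L-1,\dots,1$. Iteration gives the closed form
\[
\xi_1=\frac{\beta}{\tau^L}+\frac{c}{\tau-1}\bigl(1-\tau^{-L}\bigr).
\]
Both the monotonicity $\xi_1\le\xi_2\le\cdots\le\xi_L$ and $\xi_L\le\beta$ are equivalent to $c\le\beta(\tau-1)=1/(p+1)$, which is automatic for small $c$. The condition $(p+1)\xi_1\le\gamma-2\delta$ is most restrictive at $\gamma=\mu$ and reads
\[
\frac{(p+1)\beta}{\tau^L}+\frac{(p+1)c}{\tau-1}\le\mu-2\delta.
\]
The hypothesis $\mu\tau^L/(p+1)>\beta$ amounts to $\rho:=\mu-(p+1)\beta/\tau^L>0$, so I fix $\delta=\delta(n,p)\in(0,\rho/2)$ first and then choose $\eps,\eta>0$ small enough that $(p+1)c/(\tau-1)\le\rho-2\delta$. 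For each $\ell$, I finally pick $\alpha_\ell$ just below $\xi_\ell/\beta$ so the strict solvability inequality survives the substitution (the slack $\eta$ provides the room) and select any $\omega_\ell$ in the now-nonempty open interval.

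The main obstacle is purely bookkeeping: coordinating three nested layers of small constants ($\delta$ fixed first from the gap $\rho$, then $\eps$ and $\eta$, then the per-step closeness $\xi_\ell/\beta-\alpha_\ell$) so that \eqref{estxi}, $(p+1)\xi_1\le\gamma-2\delta$, and each of the three inequalities in \eqref{bootcond} hold simultaneously and uniformly for $\gamma\in[\mu,(p+1)\beta]$. The essential algebraic input is the identity $\mu+1+n/2=2\beta+2$: it produces the fixed expansion ratio $\tau=2p/(p+1)$ of the bootstrap, and together with the condition $\tau^L\mu/(p+1)>\beta$ pins down exactly why $L$ steps are both needed and sufficient.
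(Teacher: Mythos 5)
Your reduction of \eqref{bootcond} to a recursion on $\{\xi_\ell\}$ alone — eliminating $\omega_\ell$ and then pushing $\alpha_\ell$ toward $\xi_\ell/\beta$, using the identity $(\mu+1+n/2)/\beta=2p$ — is exactly the key step in the paper's argument, and your appeal to \eqref{estL} to guarantee $L$ iterations of the ratio $\tau=2p/(p+1)$ bridge the gap is the same. The only structural difference is direction: the paper iterates forward from $\xi_1:=(\gamma-2\delta)/(p+1)$ (so the $\xi_\ell$ depend on $\gamma$ and the endpoint condition $(p+1)\xi_1\le\gamma-2\delta$ is automatic), while you iterate backward from $\xi_{L+1}:=\beta$ to obtain $\gamma$-independent $\xi_\ell$ and then verify $(p+1)\xi_1\le\gamma-2\delta$ at the extremal value $\gamma=\mu$. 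Both are equally valid; your version gives a slightly cleaner, $\gamma$-uniform sequence at the price of an extra verification, and yours makes the geometric structure a bit more transparent.

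One small slip in the bookkeeping: since $c=(\eps+2\delta+\eta)/(p+1)$ already contains $2\delta$, the requirement $(p+1)c/(\tau-1)\le\rho-2\delta$ (with $\eps,\eta\to0$) forces $2\delta/(\tau-1)<\rho-2\delta$, i.e.
\[
\delta<\frac{\tau-1}{2\tau}\,\rho=\frac{p-1}{4p}\,\rho,
\]
which is strictly smaller than $\rho/2$. So your choice $\delta\in(0,\rho/2)$ is slightly too generous and would leave no room for $\eps,\eta>0$; the fix is simply to shrink $\delta$ to $\delta\in\bigl(0,\frac{p-1}{4p}\rho\bigr)$. The rest of the argument (monotonicity $\Leftrightarrow c\le(\tau-1)\beta=1/(p+1)$; using the slack $\eta$ to take $\alpha_\ell$ strictly below $\xi_\ell/\beta$; the open/closed structure of the $\omega_\ell$-interval) is handled correctly.
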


\begin{proof}
Considering $\alpha_\ell$ close to (and less than) $\xi_\ell/\beta$ and $\eps>0$ small,
we see that we only have to satisfy the condition $(p+1)\xi_1\leq\gamma-2\delta$ 
and the following inequalities for $\ell=1,2,\dots,L$:
\begin{equation} \label{bootcond2}
 \xi_\ell\frac\mu\beta>\gamma-\frac{\xi_\ell}\beta-\omega_\ell, \quad
\omega_\ell-\frac{n}2\frac{\xi_\ell}\beta<\gamma-2\delta-(p+1)\xi_{\ell+1}.
\end{equation}
One can find $\omega_\ell$ such that the inequalities \eqref{bootcond2} are true,
provided the lower bound for $\omega_\ell$ in these inequalities
is less than the upper bound, i.e. if
$$ \gamma-\frac{\xi_\ell}\beta(1+\mu)<\frac{n}2\frac{\xi_\ell}\beta+\gamma-2\delta-(p+1)\xi_{\ell+1}, $$
which is equivalent to
$$ (p+1)\xi_{\ell+1}<2p\xi_\ell-2\delta. $$
Consider $\delta\leq\frac{p-1}{5p+1}\mu\leq\frac13$ and set
$$\xi_1=\xi_1(\gamma):=\frac{\gamma-2\delta}{p+1}\geq\frac{\mu-2\delta}{p+1},
\quad \xi_{\ell+1}:=\min\Bigl(\beta,\frac{2p\xi_\ell-3\delta}{p+1}\Bigr),\quad\ell=1,\dots,L. $$
Then $\xi_\ell=\xi_\ell(\gamma)\geq\xi_\ell(\mu)$
and $0<\xi_1\leq\xi_2\leq\dots\leq\xi_{L+1}$.
Now \eqref{estL} guarantees that $\xi_{L+1}=\beta$ provided
$\delta=\delta(n,p)$ is small enough.
\end{proof}

The conclusion of Step~3, that will be used in Step~5, is the following lemma,
which is a consequence of Lemmas~\ref{lem-decay}--\ref{lem-ineq}.
It provides an improved energy bound assuming a suitable $L^2$ bound
of the time derivative in a cylinder.

\begin{Lemma} \label{lem-wq}
Let $L,\delta,\eps$ and 
$\xi_\ell,\alpha_\ell,\omega_\ell$, $\ell=1,2,\dots L$, be as in
Lemma~\ref{lem-ineq} and let
${\mathcal T}_k={\mathcal T}_k(d_k,r_k,\zeta,C^*)$ be as in Lemma~\ref{lem-decay}.
Assume $(a,\sigma,0)\in {\mathcal T}_k(\frac d2 k^{-\alpha_\ell},1,\gamma-\alpha_\ell+\eps,C)$
for $\ell=1,2,\dots L$ and $k$ large. 
Then 
\begin{equation} \label{est-wq}
\int_{B_{1/2}}(|\nabla W^a_k|^2+|W^a_k|^2+|W^a_k|^{p+1})(y,\sigma)\,dy\leq Ck^{\gamma-2\delta}\ \hbox{ for $k$ large}.
\end{equation}
\end{Lemma}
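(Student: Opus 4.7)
The plan is to prove the estimate \eqref{est-wq} by a layer-cake decomposition of $B_{1/2}$ into nested superlevel sets, using Lemma~\ref{lemmaG} at each scale $\xi_\ell$ to bound the measure of the exceptional set, and balancing the integrand size against the measure via the conditions in \eqref{bootcond} supplied by Lemma~\ref{lem-ineq}.

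First I would introduce, for each $\ell=1,2,\dots,L$, the good set
$$G_\ell := \bigl\{y\in B_{1/2}:\; (|W^a_k|+|\nabla W^a_k|^{2/(p+1)})(y,\sigma)\le k^{\xi_\ell}\bigr\},$$
and set $G_0:=\emptyset$, $G_{L+1}:=B_{1/2}$ (where on $G_{L+1}\setminus G_L$ only the a priori bound \eqref{bound-w2} is available, so $|W^a_k|+|\nabla W^a_k|^{2/(p+1)}\le Ck^\beta=Ck^{\xi_{L+1}}$). The hypothesis of Lemma~\ref{lem-wq} is precisely assumption \eqref{assG} of Lemma~\ref{lemmaG} for each triple $(\xi_\ell,\alpha_\ell,\omega_\ell)$, and the first two inequalities in \eqref{bootcond} are exactly \eqref{ass-rep}. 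Therefore Lemma~\ref{lemmaG} applies and yields
$$|B_{1/2}\setminus G_\ell|\le Ck^{\omega_\ell - n\alpha_\ell/2},\qquad \ell=1,\dots,L.$$

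Next I would split
$$B_{1/2}=G_1\cup\bigcup_{\ell=1}^{L}(G_{\ell+1}\setminus G_\ell)$$
and estimate the integrand. Since $p+1\ge 2$ and $\xi_\ell\ge 0$, on any set where $|W^a_k|+|\nabla W^a_k|^{2/(p+1)}\le k^{\xi_\ell}$ one has
$$|W^a_k|^2+|W^a_k|^{p+1}+|\nabla W^a_k|^2\le 3k^{(p+1)\xi_\ell}.$$
Thus the contribution from $G_1$ is bounded by $C|B_{1/2}|k^{(p+1)\xi_1}\le Ck^{\gamma-2\delta}$, by $(p+1)\xi_1\le\gamma-2\delta$. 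For $\ell=1,\dots,L$, on $G_{\ell+1}\setminus G_\ell\subset B_{1/2}\setminus G_\ell$ the integrand is bounded by $Ck^{(p+1)\xi_{\ell+1}}$ while the measure is at most $Ck^{\omega_\ell - n\alpha_\ell/2}$; the third inequality in \eqref{bootcond} gives
$$Ck^{(p+1)\xi_{\ell+1}+\omega_\ell - n\alpha_\ell/2}\le Ck^{\gamma-2\delta}.$$
Summing the $L+1$ contributions yields \eqref{est-wq}.

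The main obstacle is bookkeeping rather than substance: one must carefully verify that the conditions of Lemma~\ref{lemmaG} are indeed met at each level $\ell$ (for which the first two bounds in \eqref{bootcond} are precisely designed) and that the telescoping of the layer cake gives the claimed exponent $\gamma-2\delta$ uniformly (for which the third bound in \eqref{bootcond} together with $\xi_{L+1}=\beta$ from \eqref{estxi} is precisely designed). The heavy lifting has been pushed into Lemma~\ref{lem-decay} (which produces the pointwise bound $k^{\xi_\ell}$ inside a ball of radius $k^{-\alpha_\ell/2}$ from a local $L^2$ bound of $W_s$) and into Lemma~\ref{lem-ineq} (which chooses compatible exponents so that the pieces glue together), so the present lemma is essentially a layer-cake accounting step.
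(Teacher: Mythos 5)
Your proof is correct and takes essentially the same route as the paper: the nested good-set decomposition $B_{1/2}=G_1\cup\bigcup_{\ell=1}^L(G_{\ell+1}\setminus G_\ell)$, the measure bound from Lemma~\ref{lemmaG}, and the balancing of measure against integrand size via the third inequality in \eqref{bootcond}, with the a priori bound \eqref{bound-w2} and the identity $(2\beta+1)/(p+1)=\beta$ handling the top layer $G_{L+1}\setminus G_L$. This matches the paper's argument line for line.
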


\begin{proof}
Given $\ell\in\{1,2,\dots,L\}$, set $\xi=\xi_\ell$, $\alpha=\alpha_\ell$, $\omega=\omega_\ell$,
and let $G$ be the set in Lemma~\ref{lemmaG}.
Set $G_\ell:=G$ and $G_{L+1}:=B_{1/2}$. 
Lemma~\ref{lemmaG} and \eqref{bootcond} guarantee
$$|G_{\ell+1}\setminus G_\ell|\leq|B_{1/2}\setminus G_\ell|\leq Ck^{\omega_\ell-n\alpha_\ell/2}
                 \leq Ck^{\gamma-2\delta-(p+1)\xi_{\ell+1}},$$
hence
$$\int_{G_{\ell+1}\setminus G_\ell}(|\nabla W^a_k|^2+|W^a_k|^2+|W^a_k|^{p+1})(y,\sigma)\,dy
 \leq Ck^{(p+1)\xi_{\ell+1}}|G_{\ell+1}\setminus G_\ell| \leq Ck^{\gamma-2\delta}.$$
In addition, the definition of $G_1$ implies
\begin{equation} \label{estG1}
 \int_{G_1}(|\nabla W^a_k|^2+|W^a_k|^2+|W^a_k|^{p+1})(y,\sigma)\,dy 
 \leq Ck^{(p+1)\xi_1} \leq Ck^{\gamma-2\delta}.
\end{equation}
Since $B_{1/2}=G_1\cup\bigcup_{\ell=1}^L (G_{\ell+1}\setminus G_\ell)$,
the conclusion follows.
\end{proof}

\smallskip\noindent{\bf Step 4: The choice of suitable times.}
The proof of \eqref{Em1} will be based on estimates of $W^{a^i}_k(\cdot,s^*)$,
$i=1,2,\dots,X$, where
$s^*=s^*(k,a,j,m)\in I_j=[s_k-(M+1-j)d,s_k-(M-j)d]$, $j=m-1,m,\dots,2M-m+1$
are suitable times. 
These estimates will follow from Lemma~\ref{lem-wq},
provided we can verify its assumptions simultaneously for all $a^i$
(namely, the $L^2$ bound on the time derivative in a cylinder).
To this end, we shall use 
the following lemma. 

\begin{Lemma} \label{lem-step4}  
Let $\eps,\gamma,C_2>0$, $\alpha_1,\alpha_2,\dots,\alpha_L\geq0$,
and, given $k=1,2,\dots$, let $X_k$ be a positive integer satisfying 
$X_k\leq k^{\eps/2}$ and $\sigma_k\in\R$. Set $J_k:=[\sigma_k,\sigma_k+d]$, 
$\tilde J_k:=[\sigma_k+d/2,\sigma_k+d]$, and
assume that  $f^1_k,\dots,f^{X_k}_k\in C(J_k,\R^+)$ 
satisfy 
\begin{equation} \label{figi}
\int_{J_k}f^i_k(s)\,ds\leq C_2k^\gamma, \quad 
i=1,2,\dots X_k,\ k=1,2,\dots.
\end{equation}
Then there exists $k_1=k_1(\eps,L)$ with the following property:
If $k\geq k_1$, then there exists $s^*=s^*(k)\in\tilde J_k$ such that 
$$ \int_{s^*-\frac d2k^{-\alpha_\ell}}^{s^*}f^i_k(s)\,ds\leq C_2k^{\gamma-\alpha_\ell+\eps} $$ 
for all $i=1,2,\dots,X_k$ and $\ell=1,2,\dots,L$.
\end{Lemma}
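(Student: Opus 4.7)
The plan is a standard averaging (pigeonhole) argument that exploits the fact that the hypothesis \eqref{figi} is an $L^1$-type bound. For each pair $(i,\ell)\in\{1,\dots,X_k\}\times\{1,\dots,L\}$, define the ``bad'' set
$$
B_{i,\ell}:=\Bigl\{s^*\in\tilde J_k: \int_{s^*-\frac{d}{2}k^{-\alpha_\ell}}^{s^*}f^i_k(s)\,ds
> C_2 k^{\gamma-\alpha_\ell+\eps}\Bigr\}.
$$
Since $\alpha_\ell\ge0$ we have $\frac{d}{2}k^{-\alpha_\ell}\le \frac{d}{2}$, so for every $s^*\in\tilde J_k=[\sigma_k+d/2,\sigma_k+d]$ the interval of integration lies inside $J_k=[\sigma_k,\sigma_k+d]$, and the hypothesis \eqref{figi} is applicable to the outer integral.

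First, I would estimate $|B_{i,\ell}|$ via Fubini and Markov's inequality. Interchanging the order of integration,
$$
\int_{\tilde J_k}\int_{s^*-\frac{d}{2}k^{-\alpha_\ell}}^{s^*}f^i_k(s)\,ds\,ds^*
=\int_{J_k} f^i_k(s)\,\bigl|\{s^*\in\tilde J_k: s\le s^*\le s+\tfrac{d}{2}k^{-\alpha_\ell}\}\bigr|\,ds
\le \tfrac{d}{2}k^{-\alpha_\ell}\,C_2 k^{\gamma},
$$
so Markov gives $|B_{i,\ell}|\le \frac{d}{2}k^{-\eps}$.

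Next, I would apply a union bound over all $(i,\ell)$. Using $X_k\le k^{\eps/2}$,
$$
\Bigl|\bigcup_{i,\ell}B_{i,\ell}\Bigr|\le X_k\cdot L\cdot \tfrac{d}{2}k^{-\eps}
\le \tfrac{Ld}{2}k^{-\eps/2}.
$$
Choose $k_1=k_1(\eps,L)$ so that $\frac{Ld}{2}k^{-\eps/2}<\frac{d}{2}=|\tilde J_k|$ for all $k\ge k_1$. Then $\tilde J_k\setminus\bigcup_{i,\ell}B_{i,\ell}$ has positive measure and any point $s^*$ in it fulfills all the required inequalities simultaneously.

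I do not foresee any genuine obstacle: the argument is essentially Fubini + Chebyshev + union bound. The only bookkeeping items are (a) verifying that the sub-interval of integration stays inside $J_k$, which follows from $s^*\ge\sigma_k+d/2$ together with $\frac{d}{2}k^{-\alpha_\ell}\le d/2$, and (b) absorbing the factor $X_k\cdot L$ into the small loss $k^\eps$, which works precisely because the hypothesis allows $X_k\le k^{\eps/2}$ while we pay only $k^{-\eps}$ per bad set.
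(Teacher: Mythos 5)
Your proposal is correct and follows essentially the same route as the paper: Fubini/change of order of integration to bound $\int_{\tilde J_k}\int_{s^*-\frac{d}{2}k^{-\alpha_\ell}}^{s^*}f^i_k\,ds\,ds^*$, then Markov to bound the measure of each bad set, then a union bound using $LX_k\le Lk^{\eps/2}$. The only cosmetic difference is that you retain the harmless factor $d/2$ in the intermediate estimate, whereas the paper discards it since $d<1$.
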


\begin{proof}
Set
$$h^{i,\ell}_k(s):=\int_{s-\frac d2k^{-\alpha_\ell}}^{s}f^i_k(\tau)\,d\tau, \ \ s\in\tilde J_k,\ \
i=1,2,\dots X_k,\ \ \ell=1,2,\dots,L,\ \ k=1,2,\dots.$$
Then 
\begin{equation} \label{hiell}
\begin{aligned}
\int_{\tilde J_k} &h^{i,\ell}_k(s)\,ds
  = \int_{\tilde J_k}\int_{s-\frac d2k^{-\alpha_\ell}}^{s}f^i_k(\tau)\,d\tau\,ds
  = \int_{\tilde J_k}\int_0^{\frac d2k^{-\alpha_\ell}}f^i_k(s-\tau)\,d\tau\,ds \\
 &= \int_0^{\frac d2k^{-\alpha_\ell}}\int_{\tilde J_k}f^i_k(s-\tau)\,ds\,d\tau
  \leq \int_0^{\frac d2k^{-\alpha_\ell}}\int_{J_k} f^i_k(s)\,ds\,d\tau\leq C_2k^{\gamma-\alpha_\ell}.
\end{aligned}
\end{equation}
Set
$$ \begin{aligned}
   C^{i,\ell}_k &:=\{s\in\tilde J_k: h^{i,\ell}_k(s)>C_2k^{\gamma-\alpha_\ell+\eps}\}.
\end{aligned}$$
Then \eqref{hiell} implies $|C^{i,\ell}_k|\leq k^{-\eps}$ for each $i,\ell,k$.
Since the number of sets $C^{i,\ell}_k$ with fixed index $k$ is $LX_k\leq Lk^{\eps/2}$,
their union
$U_k:=\bigcup_{i,\ell}C^{i,\ell}_k$
has measure less than 1/2 for $k\geq k_1$, hence for $k\geq k_1$ 
there exists $s^*=s^*(k)\in\tilde J_k\setminus U_k$.
Obviously, $s^*$ has the required properties.
\end{proof}

Consider $m\in\{1,\dots,M-1\}$, $\gamma\in[\mu,(p+1)\beta]$ and $a\in\R^n$ fixed,
$J_k=J_k(j):=I_j=[s_k-(M+1-j)d,s_k-(M-j)d]$, $j=m-1,m,\dots,2M-m+1$,
and let $a^i$, $i=1,2,\dots,X$ be as in \eqref{B} (recall that $a^i$ and $X$ depend on $k$;
$X\leq C(\log k)^{n/2}$).
Let $L,\eps$ and $\alpha_\ell$, $\ell=1,2,\dots,L$ be as in Lemma~\ref{lem-ineq}.
Set 
$$f^i_k(s):=\int_{\R^n}|(W^{a^i}_k)_s|^2(y,s)\rho(y)\,dy,\quad
\quad i=1,2,\dots X.$$
Then \eqref{Em} guarantees
that the assumptions of  Lemma~\ref{lem-step4} are satisfied
with $C_2$ independent of $a$.
Consequently, if $k\geq k_1$, $a\in\R^n$ and $j\in\{m-1,m,\dots,2M-m+1\}$,  then
there exists $s^*=s^*(k,a,j,m)\in\tilde J_k:=[s_k-(M-j+1/2)d,s_k-(M-j)d]$ 
such that the following estimates are true  
for $W=W_k^{a^i}$, $i=1,2,\dots X$, $\ell=1,2,\dots L$: 
\begin{equation} \label{star1}
 \int_{s^*-\frac d2k^{-\alpha_\ell}}^{s^*}\int_{\R^n}|W_s|^2(y,s)\rho(y)\,dy\,ds \leq C_2k^{\gamma-\alpha_\ell+\eps}.
\end{equation}

\smallskip\noindent{\bf Step 5: Energy estimates.}
Let $L,\delta,\eps$ and 
$\xi_\ell,\alpha_\ell,\omega_\ell$, $\ell=1,2,\dots L$, be as in
Lemma~\ref{lem-ineq} and let
${\mathcal T}_k={\mathcal T}_k(d_k,r_k,\zeta,C^*)$ be as in Lemma~\ref{lem-decay}.
Let $a\in\R^n$ be fixed, $j\in\{m-1,m,\dots,2M-m+1\}$ and $s^*=s^*(k,a,j,m)$ be from Step~4.
Notice that \eqref{star1} guarantees 
$(a^i,s^*,0)\in{\mathcal T}_k(\frac d2k^{-\alpha_\ell},1,\gamma-\alpha_\ell+\eps,C_2/\rho(1))$
for $i=1,2,\dots,X$ and $\ell=1,2,\dots,L$.
Consequently, Lemma~\ref{lem-wq} implies
$$  \int_{B_{1/2}}(|\nabla W^{a^i}_k|^2+|W^{a^i}_k|^2+|W^{a^i}_k|^{p+1})(y,s^*)\,dy
\leq Ck^{\gamma-2\delta}\   \hbox{ for $i=1,2,\dots,X$ and $k$ large}, $$ 
and using \eqref{yyi} we obtain
$$ \begin{aligned}
\int_{B_{R_k}}&(|\nabla W^a_k|^2+|W^a_k|^2+|W^a_k|^{p+1})(y,s^*)\,dy \\
 &\leq \sum_{i=1}^X\int_{B_{1/2}}(|\nabla W^{a^i}_k|^2+|W^{a^i}_k|^2+|W^{a^i}_k|^{p+1})(y,s^*)\,dy \\
 &\leq Ck^{\gamma-2\delta}(\log k)^{n/2}
 \leq Ck^{\gamma-3\delta/2}\quad  \hbox{ for $k$ large}.
\end{aligned} $$ 
In addition, since
$$  \rho(y)=e^{-|y|^2/8-|y|^2/8}\leq k^{-n}e^{-|y|^2/8}\quad\hbox{for }\ |y|>R_k,$$
we have
$$\begin{aligned}
\int_{\R^n\setminus B_{R_k}}&(|\nabla W^a_k|^2+|W^a_k|^2+|W^a_k|^{p+1})(y,s^*)\rho(y)\,dy \\
 &\leq C\int_{\R^n\setminus B_{R_k}}k^{(p+1)\beta-n}e^{-|y|^2/8}\,dy 
 \leq Ck^{(p+1)\beta-n} = Ck^{\mu-n/2} \leq C k^{\gamma-3\delta/2} 
\end{aligned}
$$
due to $\gamma\geq\mu$,
hence
\begin{equation} \label{est32}
 \int_{\R^n}(|\nabla W^a_k|^2+|W^a_k|^2+|W^a_k|^{p+1})(y,s^*)\rho(y)\,dy
 \leq C k^{\gamma-3\delta/2}
\end{equation}
for $k$ large.
Since \eqref{fp0} with $\theta:=\delta/(2\beta)$ implies
$|F(U)|\leq C|U|^{p+1-\theta}$, estimate \eqref{est32} guarantees
\begin{equation} \label{est-fin}
\begin{aligned}
\Big|\phi^{-(p+1)}(s^*)\int_{\R^n}F(\phi W^a_k)(y,s^*)\rho(y)\,dy\Big|
&\leq C\phi^{-\beta\theta}(s^*)\int_{\R^n}|W^a_k|^{p+1-\theta}(y,s^*)\rho(y)\,dy \\
&\leq Ck^{\beta\theta+(\gamma-3\delta/2)(p+1-\theta)/(p+1)}\leq Ck^{\gamma-\delta}.
\end{aligned}
\end{equation}
Estimates \eqref{est32} and \eqref{est-fin} imply \eqref{boot1} 
with $\sigma_j^m$ replaced by $s^*(k,a,j,m)$ and
$\gamma_m$ replaced by $\gamma-\delta$,
hence also estimates \eqref{boot2}, \eqref{E} with $\gamma_m$ replaced by $\gamma-\delta$.
This proves \eqref{Em1}
and concludes the proof.
\qed

\section{Proof of Theorem~\ref{thmU-half}} 
\label{sec-proof-half}

Theorem~\ref{thmU-half} can be proved by the same modification of the proof
of Theorem~\ref{thmU} as in the case of 
\cite[the proof of Theorem~3]{Q22}.
Alternatively, we can proceed as follows.

Set $\tilde K:=K\cup(-K)$,
$$ \tilde{\mathcal K}:=\{U\in C^2(\R^n,\R^N):u_i\geq0\hbox{ in }\R^n_+,\ 
  U(x_1,\dots,x_{n-1},x_n)=-U(x_1,\dots,x_{n-1},-x_n)\},$$
and extend $F$ and $U$ by $F(V):=F(-V)$ for $V\in-K$
and $U(x_1,\dots,x_{n-1},x_n):=-U(x_1,\dots,x_{n-1},-x_n)$ for $x_n<0$
and $U\in {\mathcal K}_+$, respectively.
(Notice that $F$ remains $C^{1+\alpha}$ since our assumptions imply $\nabla F(0)=f(0)=0$.)
Then we can apply the proof of Theorem~\ref{thmU} with $K,{\mathcal K}$ 
replaced by $\tilde K,\tilde{\mathcal K}$  
and with the following modifications:

In \eqref{Vk-1}, we choose $a\in\R^n_+$ such that $U(a,0)\neq0$
(or set $a:=0$ and use $|U(0,0)|+|\nabla U(0,0)|\neq0$).
In \eqref{Vk-2}, we modify the definition of $V_k$ as follows.
Let $\lambda_k,\beta_k,a_k,\hat y_k,\hat s_k$ be as in \eqref{Vk-2}.
Set
$$ \eta_k:=\frac1{\lambda_k}(\hat y_{k,n}+a_{k,n}e^{\hat s_k/2}),$$
where $\hat y_k=(\hat y_{k,1},\dots,\hat y_{k,n})$
and $a_k=(a_{k,1},\dots,a_{k,n})$.
W.l.o.g. we can assume that $\eta_k\geq0$ for all $k$
and either $\eta_k\to\infty$ or $\eta_k\to\eta_\infty\in[0,\infty)$.
If $\eta_k\to\infty$, then the definition of $V_k$ in \eqref{Vk-2}
need not be modified and we obtain $V_k\to V$, 
where $V(\cdot,\tau)=V(\cdot,0)\in {\mathcal K}$.
If $\eta_k\to\eta_\infty<\infty$, then setting
$$\tilde V_k(z,\tau):=V_k(z-(0,\dots,0,\eta_k),\tau)$$
we have
$$\tilde V_k(z,0)=\lambda_k^{2\beta_k}e^{-\beta\hat s_k}
 U(x,k-e^{-\hat s_k}),$$
where 
$$ x=x(z):=(\lambda_k z+(\hat y_{k,1},\dots,\hat y_{k,n-1},0))e^{-\hat s_k/2}
   +(a_{k,1},\dots,a_{k,n-1},0). $$ 
Since $x_n(z_1,\dots,z_{n-1},-z_n)=-x_n(z_1,\dots,z_{n-1},z_n)$, we have
 $\tilde V_k(\cdot,0)\in\tilde{\mathcal K}$.
In addition, $(|\tilde V_k|+|\nabla\tilde V_k|^{2/(p+1)})((0,\dots,0.\eta_k),0)=c_\lambda$
and the corresponding estimates on $|\tilde V_k|,|\nabla\tilde V_k|$ and $\partial\tilde V_k/\partial\tau$
imply that
$\tilde V_k\to\tilde V$, where $\tilde V(\cdot,\tau)=\tilde V(\cdot,0)\in\tilde{\mathcal K}$
is a nontrivial stationary solution of problem \eqref{eq-U-half}
with either $\varphi=f$ or $\varphi=\fzero$.

\section{Proof of Theorem~\ref{thmUK}} 
\label{sec-proof2}

Assume to the contrary that there exists
a nontrivial bounded solution $U$ of \eqref{eq-U} satisfying $U(\cdot,t)\in{\mathcal K}$ for all $t$.
Then there exists $D>0$ such that \eqref{bound-u} is true.
We may also assume $U(0,0)\ne0$.

Let $\beta,\mu,\phi,\rho,y,s,s_k,W=W^a_k$ and $E=E^a_k$ have the same meaning as in the proof of Theorem~\ref{thmU}
and $M:=1$.
Then \eqref{eq-w}, \eqref{bound-w2}, \eqref{GK1} and \eqref{GK2} are true.
By $C,C_0,C_1,\dots,$\allowbreak$c,c_0,c_1\dots$ we denote positive constants which depend only on
$n,p,D,f,c_D,C_D$. 
Assumption \eqref{F3} implies
$$\begin{aligned} \frac{d}{ds}&\int_{\R^n}\xi\cdot W(y,s)\rho(y)\,dy+\beta\int_{\R^n}\xi\cdot W(y,s)\rho(y)\,dy \\
&\geq c\int_{\R^n}|W(y,s)|^p\rho(y)\,dy 
\geq c\Bigl(\int_{\R^n}\xi\cdot W(y,s)\rho(y)\,dy\Bigr)^p,\end{aligned}$$
which yields (cf.~\cite{Q16},\cite{Q21})
\begin{equation} \label{estK}
\int_{\R^n}|W(y,s)|\rho(y)\,dy\leq C,\quad
\int_{\sigma_1}^{\sigma_2}\int_{\R^n}|W(y,s)|^p\rho(y)\,dy\,ds\leq C(1+\sigma_2-\sigma_1).
\end{equation}
Identity \eqref{GK2} and assumption \eqref{F3} imply
\begin{equation} \label{estsigma12}
\begin{aligned}
 \int_{\sigma_1}^{\sigma_2}\int_{\R^n}|W_s|^2(y,s)\rho(y)\,dy\,ds
 &\leq E(\sigma_1)-E(\sigma_2) \\
 &\ +C\int_{\sigma_1}^{\sigma_2}\phi^{q-p}(s)\int_{\R^n}|W(y,s)|^{q+1}\rho(y)\,dy\,ds.
\end{aligned}
\end{equation}
Notice that if $\sigma_1,\sigma_2\in[s_k-2,s_k+1]$, 
then $\phi^{-1}(s)\leq Ck^\beta$ for $s\in[\sigma_1,\sigma_2]$,
hence \eqref{estK} and \eqref{bound-w2} imply
\begin{equation} \label{betatilde}
 \int_{\sigma_1}^{\sigma_2}\phi^{q-p}(s)\int_{\R^n}|W(y,s)|^{q+1}\rho(y)\,dy\,ds
\leq Ck^{\tilde\beta},
\end{equation}
where $\tilde\beta:=\beta$ if $q\geq p-1$ and $\tilde\beta:=\beta(p-q)$ otherwise.

There exist $\tilde\sigma\in[s_k-2,s_k-1]$ and $\hat\sigma\in[s_k,s_k+1]$ such that
$E(\tilde\sigma)\leq\int_{s_k-2}^{s_k-1}E(s)\,ds$, 
$E(\hat\sigma)\geq\int_{s_k}^{s_k+1}E(s)\,ds$.
Consequently, integrating \eqref{GK1} 
we obtain from \eqref{estK}, \eqref{bound-w2} and \eqref{betatilde}
$$ \begin{aligned}
 2E(\tilde\sigma) &\leq\frac12\int_{\R^n}|W|^2(y,s_k-2)\rho(y)\,dy \\
  &\ +C\int_{s_k-2}^{s_k-1}\phi^{q-p}(s)\int_{\R^n}|W(y,s)|^{q+1}\rho(y)\,dy\,ds 
   \leq Ck^{\tilde\beta}, \\
  2E(\hat\sigma) &\geq-\frac12\int_{\R^n}|W|^2(y,s_k+1)\rho(y)\,dy \\
 &\ -C\int_{s_k}^{s_k+1}\phi^{q-p}(s)\int_{\R^n}|W(y,s)|^{q+1}\rho(y)\,dy\,ds
   \geq-Ck^{\tilde\beta}. 
\end{aligned}$$
Using these estimates 
together with \eqref{estK}, \eqref{estsigma12}, \eqref{betatilde} 
yields
\begin{equation} \label{estThm2}
 \int_{s_k-1}^{s_k}\int_{\R^n}|W_s|^2(y,s)\rho(y)\,dy\,ds 
\leq \int_{\tilde\sigma}^{\hat\sigma}\int_{\R^n}|W_s|^2(y,s)\rho(y)\,dy\,ds
\leq Ck^{\tilde\beta}.
\end{equation}
Taking $\lambda_k:=k^{-1/2}$ and setting
$$ V_k(z,\tau):=\lambda_k^{2\beta}W_k^0(\lambda_k z,\lambda_k^2\tau+s_k),
 \qquad z\in\R^n,\ -k\leq\tau\leq0,  $$ 
we obtain
$|V_k|+|\nabla V_k|\leq C$, $V_k(0,0)=U(0,0)\ne0$,
$$ \frac{\partial V_k}{\partial\tau}-\Delta V_k-(\lambda_k^{2\beta}\phi^{-1})^pf(\lambda_k^{-2\beta}\phi V_k)
  =-\lambda_k^2\Bigl(\frac12 z\cdot\nabla V_k+\beta V_k\Bigr). $$
Notice that $\lambda_k^{-2\beta}\phi(s)=\phi(s-s_k)=e^{\beta\tau/k}\to1$ for fixed $\tau$.
Using \eqref{estThm2} and $p<p_{sg}$  we also have
$$
\begin{aligned}
\int_{-k}^0\int_{|z|<\sqrt{k}}
 &\Big|\frac{\partial V_k}{\partial\tau}(z,\tau)\Big|^2\,dz\,d\tau
  =\lambda_k^{2\mu}
 \int_{s_k-1}^{s_k}\int_{|y|<1} 
\Big|\frac{\partial W_k^0}{\partial s}(y,s)\Big|^2\,dy\,ds \\
&\leq C k^{-\mu+\tilde\beta}\to 0 \quad\hbox{as }\ k\to\infty.
\end{aligned}
$$
Now the same arguments as above
show that
(up to a subsequence) the sequence $\{V_k\}$
converges to a nontrivial solution $V\in{\mathcal K}$
of the problem $\Delta V+f(V)=0$ in $\R^n$,
which contradicts our assumption and
concludes the proof.
 
\section{Proof of Theorem~\ref{thmUK2}} 
\label{sec-proof3}

Assume to the contrary that there exists
a nontrivial bounded solution $U$ of \eqref{eq-U} satisfying $U(\cdot,t)\in{\mathcal K}$ for all $t$.
Then there exists $D>0$ such that \eqref{bound-u} is true.
We may also assume $U(0,0)\ne0$.

Let $\beta,\mu,\phi,\rho,y,s,s_k,W=W^a_k,E=E^a_k,{\mathcal E}={\mathcal E}^a_k$
and $C_E$ 
have the same meaning as in the proof of Theorem~\ref{thmU}
and $M:=1$.
Then \eqref{eq-w}, \eqref{bound-w2}, \eqref{GK1}, \eqref{GK2} and \eqref{dEs} are true.
By $C,C_0,C_1,\dots,c,c_0,c_1\dots$ we denote positive constants which depend only on
$n,p,D,f,c_D,C_D$. 

Assume $\sigma_1,\sigma_2\in[s_k-2,s_k+1]$ and denote
$C_\sigma:=e^{2C_E(s_k-\sigma)}$.
Then
\eqref{dEs} guarantees the existence of $c,C$ such that
\begin{equation} \label{estWs22}
\begin{aligned}
c\int_{\sigma_1}^{\sigma_2}&\int_{\R^n}|W_s|^2(y,s)\rho(y)\,dy\,ds \\
&\leq C_{\sigma_1}{\mathcal E}(\sigma_1)-C_{\sigma_2}{\mathcal E}(\sigma_2)
+C\int_{\sigma_1}^{\sigma_2}\int_{\R^n}|W|^2(y,s)\rho(y)\,dy\,ds.
\end{aligned}
\end{equation}
Similarly, \eqref{GK1} implies
\begin{equation} \label{GK1int}
\begin{aligned}
4\int_{\sigma_1}^{\sigma_2}E(s)\,ds &= 
\int_{\R^n}|W|^2(y,\sigma_1)\rho(y)\,dy - \int_{\R^n}|W|^2(y,\sigma_2)\rho(y)\,dy  \\
 &\ \ +2\int_{\sigma_1}^{\sigma_2}
\phi^{-(p+1)}(s)\int_{\R^n}[f(\phi W)\cdot\phi W-2F(\phi W)](y,s)\rho(y)\,dy\,ds,
\end{aligned}
\end{equation}
hence
\eqref{F2} and $F\geq0$ imply
\begin{equation} \label{Esigma2}
4\int_{\sigma_1}^{\sigma_2}E(s)\,ds \geq -\int_{\R^n}|W|^2(y,\sigma_2)\rho(y)\,dy.
\end{equation}
Since \eqref{F4} and $F(U)=\int_0^1f(tU)\cdot U\,dt$
imply 
$$ |f(\phi W)\cdot\phi W|+F(\phi W)\leq C(|\phi W|^{p+1}+G(\phi W)|\phi W|)
= C(\phi^{p+1}|W|^{p+1}+\phi^{q+1}G(W)|W|),$$
estimate \eqref{GK1int} yields
\begin{equation} \label{Esigma1}
\begin{aligned}
4\int_{\sigma_1}^{\sigma_2}E(s)\,ds &\leq \int_{\R^n}|W|^2(y,\sigma_1)\rho(y)\,dy \\
&\ +C\int_{\sigma_1}^{\sigma_2}\int_{\R^n}(|W|^{p+1}+\phi^{q-p}G(W)|W|)(y,s)\rho(y)\,dy\,ds.  
\end{aligned}
\end{equation}
In the same way as in the proof of Theorem~\ref{thmUK} we obtain
\eqref{estK}. Since \eqref{F4} also implies
$$\begin{aligned} \frac{d}{ds}&\int_{\R^n}\xi\cdot W(y,s)\rho(y)\,dy+\beta\int_{\R^n}\xi\cdot W(y,s)\rho(y)\,dy \\
&\geq c\phi^{-p}(s)\int_{\R^n}G(\phi W)(y,s)\rho(y)\,dy
=c\phi^{q-p}(s)\int_{\R^n}G(W)(y,s)\rho(y)\,dy, 
\end{aligned} $$
the first inequality in \eqref{estK} guarantees
\begin{equation} \label{estK2}
\int_{\sigma_1}^{\sigma_2}\phi^{q-p}(s)\int_{\R^n}G(W)(y,s)\,dy\,ds\leq C. 
\end{equation}
The second inequality in \eqref{estK} and \eqref{bound-w2} also imply
\begin{equation} \label{estW2}
\int_{\sigma_1}^{\sigma_2}\int_{\R^n}|W|^2(y,s)\rho(y)\,dy\,ds \leq Ck^{\beta(2-p)_+}.
\end{equation}
Estimates \eqref{estK}, \eqref{estK2} and \eqref{Esigma1} guarantee
\begin{equation} \label{Esigma1a}
\int_{\sigma_1}^{\sigma_2}E(s)\,ds \leq C\sup_{s\in[\sigma_1,\sigma_2],\,y\in\R^n} |W|(y,s).
\end{equation}
Notice also that if $\sigma_2\leq s_k-1$, then
\begin{equation} \label{ck}
\sup_{s\in[\sigma_1,\sigma_2],\,y\in\R^n}|W|(y,s)\leq Cc_k k^\beta,
\end{equation}
where 
$$c_k:=\sup_{t\leq k(1-e),\,x\in\R^n}|U(x,t)|.$$

Estimates \eqref{Esigma1a} and \eqref{ck} guarantee the existence of $\hat\sigma_1\in[s_k-2,s_k-1]$
such that
\begin{equation} \label{calE1}
 {\mathcal E}(\hat\sigma_1)\leq E(\hat\sigma_1)\leq \int_{s_k-2}^{s_k-1}E(s)\,ds
  \leq Cc_k k^\beta. 
\end{equation}
Similarly, by using \eqref{estW2} we find $\tilde\sigma_2\in[s_k+1/2,s_k+1]$ such that
$$ \int_{\R^n}|W|^2(y,\tilde\sigma_2)\rho(y)\,dy
 \leq 2\int_{s_k+1/2}^{s_k+1}\int_{\R^n}|W|^2(y,s)\rho(y)\,dy\,ds \leq Ck^{\beta(2-p)_+}.$$
This estimate and \eqref{Esigma2}, \eqref{estW2} imply
$$\begin{aligned}
\int_{s_k}^{\tilde\sigma_2}{\mathcal E}(s)\,ds &=
\int_{s_k}^{\tilde\sigma_2}\Bigl(E(s)-\frac{C_E}2\int_{\R^n}|W|^2(y,s)\rho(y)\,dy\Bigr)\,ds \\
&\geq -\frac14\int_{\R^n}|W|^2(y,\tilde\sigma_2)\rho(y)\,dy
-Ck^{\beta(2-p)_+} \geq -Ck^{\beta(2-p)_+},
\end{aligned}$$
hence there exists $\hat\sigma_2\in[s_k,\tilde\sigma_2]$ such that
\begin{equation} \label{calE2}
{\mathcal E}(\hat\sigma_2)\geq -Ck^{\beta(2-p)_+}.
\end{equation}
Now \eqref{estWs22}, \eqref{calE1}, \eqref{calE2} and \eqref{estW2} yield
\begin{equation}\label{estWs23}
\int_{s_k-1}^{s_k}\int_{\R^n}|W_s|^2(y,s)\rho(y)\,dy\,ds\leq
\int_{\hat\sigma_1}^{\hat\sigma_2}\int_{\R^n}|W_s|^2(y,s)\rho(y)\,dy\,ds
\leq C(c_k k^\beta+k^{\beta(2-p)_+}).
\end{equation}

Now we proceed as in \cite{Q20}.
Taking $\lambda_k:=k^{-1/2}$ and setting
$$ V_k(z,\tau):=\lambda_k^{2\beta}W_k^0(\lambda_k z,\lambda_k^2\tau+s_k),
 \qquad z\in\R^n,\ -k\leq\tau\leq0,  $$ 
we obtain
$|V_k|+|\nabla V_k|\leq C$, $V_k(0,0)=U(0,0)\ne0$,
$$ \frac{\partial V_k}{\partial\tau}-\Delta V_k-(\lambda_k^{2\beta}\phi^{-1})^pf(\lambda_k^{-2\beta}\phi V_k)
  =-\lambda_k^2\Bigl(\frac12 z\cdot\nabla V_k+\beta V_k\Bigr). $$
Notice that $\lambda_k^{-2\beta}\phi(s)=\phi(s-s_k)=e^{\beta\tau/k}\to1$ for fixed $\tau$.
Using \eqref{estWs23} and $p\leq p_{sg}$ (i.e.~$\beta\leq\mu=2\beta-\frac{n-2}2$)
we also have
\begin{equation} \label{estVs}
\begin{aligned}
\int_{-k}^0\int_{|z|<\sqrt{k}}
 &\Big|\frac{\partial V_k}{\partial\tau}(z,\tau)\Big|^2\,dz\,d\tau
  =\lambda_k^{2\mu}
 \int_{s_k-1}^{s_k}\int_{|y|<1} 
\Big|\frac{\partial W_k^0}{\partial s}(y,s)\Big|^2\,dy\,ds \\
&\leq Ck^{-\mu}(c_k k^\beta+k^{\beta(2-p)_+}) 
\leq C k^{-\mu+\beta}\leq C \quad\hbox{as }\ k\to\infty.
\end{aligned}
\end{equation}
Since 
$$V_k(z,\tau)=e^{-\beta\tau/k}U(e^{-\tau/2k}z,k(1-e^{-\tau/k}))\to U(z,\tau)\ \hbox{ as }k\to\infty, $$
\eqref{estVs} implies
$\int_{-\infty}^0\int_{\R^n}|U_t|^2\,dx\,dt\leq C$.
Consequently, $\int_{-j-1}^{-j}\int_{\R^n}|U_t|^2\,dx\,dt\to0$ as $j\to\infty$,
and we can find $t_j\in[-j-1,-j]$ such that
$\int_{\R^n}|U_t|^2(x,t_j)\,dx\to0$ as $j\to\infty$.

First we show that $\|U(\cdot,t_j)\|_\infty\to 0$ as $j\to\infty$.
Assume on the contrary $\|U(\cdot,t_j)\|_\infty\geq\eps$
and choose $x_j$ such that $|U(x_j,t_j)|>\eps/2$.
Set $\tilde U_j(x):=U(x_j+x,t_j)$. Then $\{\tilde U_j\}$ is relatively compact in $C^2_{loc}$,
hence a subsequence of $\{\tilde U_j\}$ converges to $\tilde U\in{\mathcal K}$
satisfying $|\tilde U(0)|\geq\eps/2$, $-\Delta\tilde U=f(\tilde U)$ in $\R^n$,
which yields a contradiction.

Since $m_j:=\|U(\cdot,t_j)\|_\infty\to 0$ as $j\to\infty$, $|t_j-t_{j-1}|\leq2$ and $|f(U)|\leq C$, 
we have $\|U(\cdot,t)\|_\infty\leq e^{2C}m_j$ for $j\in[t_j,t_{j-1}]$, hence
$\sup_{t\leq t_j}\|U(\cdot,t)\|_\infty\to0$ as $j\to\infty$, which implies $c_k\to0$ as $k\to\infty$.
Consequently, $k^{-\mu}(c_k k^\beta+k^{\beta(2-p)_+})\to0$ as $k\to\infty$
so that \eqref{estVs} implies $\int_{-\infty}^0\int_{\R^n}|U_t|^2\,dx\,dt=0$,
hence $-\Delta U=f(U)$ in $\R^n$,
which contradicts our assumption and concludes the proof.

\section{Proof of Theorem~\ref{thm-proportional}} 
\label{sec-proof4}

In the proof of Theorem~\ref{thm-proportional} we will use the following lemma
which is a modification of \cite[Proposition~4]{Q16a}.

\begin{lemma} \label{lemQ-JDE}
Let $h\in C([0,\infty))$, $h(s)>0$ for $s>0$, 
$w\in W^{2,1;1}_{loc}\cap L^\infty(X\times\R)\cap C(\overline X\times\R)$, 
$w=0$ on $\partial\R^n_+$ if $X=\R^n_+$, and 
$w_t-\Delta w\leq-h(w)$ a.e.~in $W^+:=\{(x,t)\in X\times\R:w(x,t)>0\}$.
Then $W^+=\emptyset$.
\end{lemma}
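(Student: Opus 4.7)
The plan is a comparison argument against a linear-in-time supersolution, exploiting that $-h(w)$ acts as a sustained absorption wherever $w$ is bounded below by a positive constant. Suppose for contradiction that $W^+\neq\emptyset$, and set $M:=\|w\|_\infty$, so $M>0$. For any $\eps\in(0,M)$, continuity and positivity of $h$ on $(0,\infty)$ give $c_\eps:=\min_{[\eps,M]}h>0$. For each $s\in\R$, I will work on the time strip $[s,s+T_\eps]$ with $T_\eps:=(M-\eps)/c_\eps$, and compare $w$ with the affine barrier $\Phi(t):=M-c_\eps(t-s)$, which decreases linearly from $M$ at $t=s$ to $\eps$ at $t=s+T_\eps$.

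The key step is to prove $w(x,t)\le\Phi(t)$ on $X\times[s,s+T_\eps]$ by a maximum principle applied to $v:=w-\Phi$. The crucial observation is that $\{v>0\}\subset W^+$: indeed $w>\Phi\ge\eps>0$ on that set, and moreover $w\in[\eps,M]$ there, whence $h(w)\ge c_\eps=-\dot\Phi$ and
$$ v_t-\Delta v \le -h(w)+c_\eps \le 0 \quad\text{on } \{v>0\}. $$
On the parabolic boundary one has $v(\cdot,s)=w(\cdot,s)-M\le 0$ and, if $X=\R^n_+$, $v=-\Phi\le-\eps<0$ on $\partial\R^n_+\times[s,s+T_\eps]$. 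The bound $v\le 0$ then follows from a Stampacchia-type argument: multiplying the above inequality by $v^+$ and integrating over $X$ (which is legitimate since $\{v^+>0\}\subset W^+$ where $w\in W^{2,1;1}_{loc}$ and $v$ is bounded), one obtains $\tfrac{d}{dt}\int_X(v^+)^2\le 0$, and since $v^+(\cdot,s)\equiv 0$, this yields $v^+\equiv 0$.

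Evaluating at $t=s+T_\eps$ then gives $\sup_x w(x,s+T_\eps)\le\eps$. Since $w$ is defined for all $t\in\R$, for any fixed $t_0$ one may choose $s:=t_0-T_\eps$, obtaining $w(x,t_0)\le\eps$ for every $x$. As $\eps\in(0,M)$ is arbitrary, $w(x,t_0)\le 0$ for all $(x,t_0)\in X\times\R$, contradicting $W^+\neq\emptyset$. The main technical point I expect to be delicate is the justification of the Stampacchia-type maximum principle for $W^{2,1;1}_{loc}$ subsolutions on an unbounded domain when the differential inequality only holds on the a priori unknown set $W^+$; the inclusion $\{v>0\}\subset W^+$ is precisely what makes truncation on $v^+$ legal, and the $L^\infty$ bound on $w$ controls the behavior at infinity (one may, if needed, first work against compactly supported test functions in $X$ and then pass to the limit using dominated convergence, since $v^+$ is bounded and integrated quadratically).
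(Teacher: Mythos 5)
Your barrier/back-propagation idea is genuinely different from the paper's argument and is elegant: rather than localizing near a (near-)maximum, you compare $w$ with the affine barrier $\Phi(t)=M-c_\eps(t-s)$ on a strip, push the absorption forward in time, and then use translation invariance in $t$ to send $s\to-\infty$. The reduction $\{v>0\}\subset W^+$ and the computation $v_t-\Delta v\le 0$ there are correct, and the final iteration over $\eps$ is clean.

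The gap is in the step you yourself flag as delicate, and the fix you sketch does not actually close it. The Stampacchia estimate $\frac{d}{dt}\int_X(v^+)^2\,dx\le 0$ on $X=\R^n$ or $\R^n_+$ runs into two distinct problems. First, $X$ is unbounded and $(v^+)^2$ need not be integrable; if you truncate with cutoffs $\chi_R$, the error term picked up from $\nabla\chi_R$ is of size $R^{-2}\int_{B_{2R}\setminus B_R}(v^+)^2\lesssim R^{n-2}$, which does \emph{not} vanish as $R\to\infty$, so ``pass to the limit by dominated convergence'' is not available -- there is no dominating integrable function. One must instead use an integrable weight such as $\zeta_\delta(x)=e^{-\delta\sqrt{1+|x|^2}}$ together with $|\Delta\zeta_\delta|\le C\delta\zeta_\delta$ and a Gronwall argument, i.e.\ a Phragm\'en--Lindel\"of-type maximum principle for bounded subsolutions. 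Second, the hypothesis only gives $w\in W^{2,1;1}_{loc}$, so $\nabla w\in L^1_{loc}$ but not $L^2_{loc}$; the term $\int|\nabla v^+|^2$ produced by the integration by parts is not obviously finite, and the energy identity $\frac{d}{dt}\int(v^+)^2\zeta=2\int v^+v_t\zeta$ is not justified at this regularity without further work.

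The paper's proof avoids both obstacles at once by a different mechanism: it perturbs $w$ by $-\eps|x-x^*|^2-\eps(\sqrt{(t-t^*)^2+1}-1)$ (a strict spacewise concave penalization), mollifies to $z_k=w_\eps*\varphi_k$ so that all pointwise manipulations are legitimate, and derives the contradiction $0\le[(z_k)_t-\Delta z_k](x_k,t_k)\le -H+\eps(1+2n)<0$ at an interior maximum of $z_k$. The perturbation forces the supremum to be attained (taking care of the unbounded domain), and mollification absorbs the $W^{2,1;1}_{loc}$ regularity since $[w_t-\Delta w]*\varphi_k$ and $h(w)*\varphi_k$ are classical. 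If you replace your Stampacchia step by such a Phragm\'en--Lindel\"of comparison principle proved via the same mollification device, your backward-in-time argument goes through; as written, the energy step is not sound.
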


\begin{proof}
Assume on the contrary that $W^+$ is nonempty. Then
$$W:=\sup_{X\times\R}w >0\qquad \hbox{and}\qquad H:=\inf_{W\geq s\geq W/4} h(s)>0.$$ 
Fix $\eps\in(0,H/(1+2n))$
and $(x^*,t^*)\in X\times\R$ such that $w(x^*,t^*)>W/2$.
Set
$$ w_\eps(x,t):=w(x,t)-\eps|x-x^*|^2
  -\eps\bigl(\sqrt{(t-t^*)^2+1}-1\bigr),\qquad x\in X,\ t\in\R,$$
and $z_k:=w_\eps*\varphi_k$, where $\varphi_k$ is a sequence of mollifiers
($\varphi_k(x,t)=k^{n+1}\varphi(kx,kt)$, $\varphi\geq0$ is
smooth, with support in the unit ball of $\R^n\times\R$,
$\int_{\R^n\times\R}\varphi=1$).
If $k$ is large enough,
then $z_k$ attains its supremum at some $(x_k,t_k)$
and $w\ge w_\eps>W/4$ on $B:=B_{1/k}(x_k,t_k)\subset X\times\R$
(due to the uniform continuity of $w_\eps$ in the compact 
$\{(x,t)\in\overline X\times\R:w_\eps(x,t)\ge 0\}$).
Notice also that the perturbation term 
$\psi(x,t):=-\eps|x-x^*|^2
  -\eps\bigl(\sqrt{(t-t^*)^2+1}-1\bigr)$ satisfies 
$$ (\psi_t-\Delta\psi)(x,t)=2\eps
n-\eps\frac{t-t^*}{\sqrt{(t-t^*)^2+1}}\le\eps(1+2n),$$
hence $(\psi_t-\Delta\psi)*\varphi_k\le\eps(1+2n)$.
Consequently,
$$ \begin{aligned}
  0 &\leq [(z_k)_t-\Delta z_k](x_k,t_k) \\
   &= [(w_\eps)_t-\Delta w_\eps]*\varphi_k(x_k,t_k) \\
  &\leq [w_t-\Delta w]*\varphi_k(x_k,t_k)+\eps(1+2n) \\ 
 &\leq -h(w)*\varphi_k(x_k,t_k)+\eps(1+2n) \\
  &\leq -H+\eps(1+2n)<0 
\end{aligned} $$
which yields a contradiction.
\end{proof}

\begin{proof}[Proof of Theorem~\ref{thm-proportional}]
Set $w:=u-v$. Then $|w|\leq M$ for some $M>0$. 
We will prove $w\leq0$ (the proof of $w\geq0$ follows
by replacing the role of $u$ and $v$).
Notice that $w$ is a solution of
$$ 
w_t-\Delta w =\phi(u,v)(\psi_1(u,v)-\mu\psi_2(u,v)), \quad\hbox{where}\quad
\left\{\ \begin{aligned}
\psi_1(u,v)&:=k(u)g(v)-k(v)g(u),\\ 
\psi_2(u,v)&:=kg(u)-kg(v).
\end{aligned}\right. $$
Set $\tilde h(t):=\inf_{\tau\geq0}(\varphi(\tau+t)-\varphi(\tau))$, $t\in(0,M]$.
Then 
$$\begin{aligned}
\tilde h(t) &= \min(\inf_{\tau\geq t/2}(\varphi(\tau+t)-\varphi(\tau)),\inf_{\tau<t/2}(\varphi(\tau+t)-\varphi(\tau))) \\
 &\geq \min(t\min_{s\in[t/2,M]}\varphi'(s),\varphi(t)-\varphi(t/2))=:h(t).
\end{aligned}$$
Set also $h(0)=0$. Then $h$ is continuous and $h(t)>0$ for $t\in(0,M]$.

Assume on the contrary that $w>0$ in some nonempty domain $W^+\subset X\times\R$.
The following estimates are true in $W^+$.

First assume $\mu>0$ (hence $\varphi=kg$ and $\tilde k=k$). Then
$$\psi_1(u,v)=
\begin{cases} -k(0)g(u)\leq0 & \hbox{ if }v=0, \\
    g(u)g(v)(k/g(u)-k/g(v))\leq0 &\hbox{ if }v>0,
\end{cases}$$
and we also have
$$\psi_2(u,v)=\varphi(u)-\varphi(v)=\varphi(v+w)-\varphi(v)\geq\tilde h(w)\geq h(w),$$ 
hence
$w_t-\Delta w\leq-\phi(u,v)\mu\psi_2(u,v) \leq-c_\phi\mu h(w)$.
Now Lemma~\ref{lemQ-JDE} implies $w\leq0$.

Next assume $\mu=0$ (hence $\varphi=g$, $\eps>0$ and $\tilde k=k-\eps$).
Then
$$ \tilde k(u)g(v)-\tilde k(v)g(u)=
\begin{cases}
-\tilde k(0)g(u)\leq 0 &\hbox{ if $v=0$}, \\
g(u)g(v)(\tilde k/g(u)-\tilde k/g(v))\leq 0 &\hbox{ if $v>0$},
\end{cases}
$$
hence
$$\psi_1(u,v) \leq \eps(g(v)-g(u))=-\eps(\varphi(u)-\varphi(v))\leq-\eps h(w).$$
Consequently, $w_t-\Delta w\leq-c_\phi\eps h(w)$ 
and Lemma~\ref{lemQ-JDE} implies $w\leq0$.
\end{proof}

\section{Proof of Theorem~\ref{thmUB}} 
\label{sec-proof5}

The nonexistence of nontrivial bounded entire solutions of $V_t-\Delta V=\varphi(V)$ implies $\varphi(U)\ne0$ whenever $U\ne0$.
In particular, if $\varphi=f$, then $f^+(\lambda)>0$ for $\lambda>0$.

If $U\ne0$, then we set 
$$M(U):=\sqrt{\frac{f^+(|U|)}{\sigma+|U|}},$$
where $\sigma=1$ in Case (i) and $\sigma=0$ otherwise.
Assuming $|U|\geq\Lambda$ in Case (i) and $|U|\leq\Lambda$ in Case (ii),
we will prove
\begin{equation} \label{estM}
 M(U)\leq C(\sigma+\hbox{\rm dist}^{-1}(X,\partial D)).
\end{equation}
Notice that \eqref{estM} implies estimates \eqref{UBh} and \eqref{UB}.

\cite[Remark 7.1]{QS24} 
implies the existence of $c>0$ and $r>1$ such that
$$f^+(\lambda s)\geq cs^r f^+(\lambda)
\quad\hbox{whenever }s\geq1\hbox{ and }
\begin{cases} \lambda\geq\Lambda &\hbox{ in Case (i)},\\
              \lambda s\in(0,\Lambda] &\hbox{ in Case (ii)},\\
              \lambda>0 &\hbox{ in Case (iii)}.
\end{cases}$$
Fix $s_0>1$ such that $\kappa:=cs_0^{r-1}>1$.
Then the inequality $f^+(\lambda s)\geq cs^r f^+(\lambda)$
implies 
\begin{equation} \label{fsup2}
f^+(\lambda s)\geq \kappa s f^+(\lambda)\quad\hbox{for }s\geq s_0
\end{equation}
and inequality \eqref{fsup2} implies
\begin{equation} \label{s0bound}
M^2(U)\leq \kappa M^2(V)\ \Rightarrow\ |U|\leq s_0|V|,
\end{equation}
provided $|U|,|V|\geq\Lambda$ in Case (i)
and $|U|,|V|\leq\Lambda$ in Case (ii). 

We will use the same arguments as in the proof of \cite[Theorem 4.3]{S23}. 
Assume to the contrary that there exist nonempty open sets $D_k$, 
solutions $U_k$ of \eqref{eq-U} in $D_k$ and $X_k=(x_k,t_k)\in D_k$ such that
\begin{equation} \label{flarge}
 M_k(X_k)>\frac{\sqrt\kappa k}{(\sqrt{\kappa}-1)}({\sigma+}\hbox{dist}^{-1}(X_k,\partial D_k)),
\end{equation}
where $M_k:=M(U_k)$,  
$|U_k(X_k)|\geq\Lambda$ in Case (i) and $|U_k|\leq\Lambda$ in Case (ii).
An obvious modification of the doubling lemma in \cite{PQS07a}
guarantees that we may assume
$M_k(X)\leq\sqrt\kappa M_k(X_k)$ in $\hat D_k=\{X:d(X,X_k)\leq \frac{k}{M_k(X_k)}\}$.
(If $k$ is large, then the inequality $|U_k(X_k)|\geq\Lambda$ in Case~(i) remains true
after the modification of $X_k$ in the doubling lemma, since $M_k\to\infty$ 
in Case~(i) and $f^+(U)$ is bounded for $|U|\leq\Lambda$.)
Consequently, \eqref{s0bound} implies $|U_k(X)|\leq s_0|U_k(X_k)|$ for $X\in \hat D$.
Set
$$ m_k:=|U_k(X_k)|,\quad\alpha_k:=\frac1{M_k(X_k)},\quad
V_k(Y)=\frac1{m_k}U_k(x_k+\alpha_k y,t_k+\alpha_k^2 s), $$
where $Y=(y,s)\in\tilde D_k:=\{Y:|y|<k/2,\ |s|<k^2/4\}$.
Then $|V_k(0,0)|=1$, $|V_k|\leq s_0$ and
\begin{equation} \label{eqVk}
 (V_k)_s-\Delta V=\frac{\alpha_k^2}{m_k}f(m_k V_k)=\frac{\sigma+m_k}{m_k}\frac1{f^+(m_k)}f(m_k V_k)\quad\hbox{in }\tilde D_k.
\end{equation}
Since $m_k\to\infty$ in Case~(i) and $m_k\leq\Lambda$ in Case~(ii),
the right-hand side of \eqref{eqVk} is bounded,
and we can suppose that $V_k\to W$ locally uniformly,
where $|W(0,0)|=1$ and $|W|\leq s_0$.
If $m_k\to\infty$ or $m_k\to0$, then $W$ is a nontrivial bounded entire solution 
of $W_s-\Delta W=\varphi(W)$, where $\varphi=\finfty$ or $\varphi=\fzero$, respectively,
which yields a contradiction.
Hence we may assume $m_k\to a\in(0,\infty)$.
Then $W$ is a nontrivial bounded entire solution of 
$W_s-\Delta W=\frac{f(aW)}{f^+(a)}$, hence $\tilde W(y,s):=aW(\lambda y,\lambda^2 s)$
with $\lambda:=\sqrt{f^+(a)/a}$ is a nontrivial bounded entire solution of 
$\tilde W_s-\Delta\tilde W=f(\tilde W)$, which yields a contradiction again.

\vskip3mm
\noindent{\bf Acknowledgement.}
The first author was supported in part by the Slovak Research and Development Agency
under the contract No. APVV-23-0039 and by VEGA grant 1/0245/24.
Part of this work was done during visits of the first author at Universit\'e Sorbonne Paris Nord
and of the second author at Comenius University. The authors thank these institutions for their support.



\begin{thebibliography}{31}
%
\bibitem{BV98}
M.-F. Bidaut-V\'eron:
Initial blow-up for the solutions of a semilinear parabolic equation
with source term. In:
Equations aux d\'eriv\'ees partielles et applications, articles
d\'edi\'es \`a Jacques-Louis Lions, pp.~189--198.
Gauthier-Villars, Paris 1998
%
\bibitem{CS} 
L.D. Chabi and Ph. Souplet:
Refined behavior and structural universality of the blow-up profile for the semilinear heat equation with non scale invariant nonlinearity.
Math. Ann., to appear
%
\bibitem{DNZ} 
G.K. Duong, V.T. Nguyen and H. Zaag:
Construction of a stable blowup solution with a prescribed behavior for a non-scaling-invariant semilinear heat equation.
Tunis. J. Math. 1 (2019), 13--45
%
\bibitem{FML} 
A. Friedman and B. McLeod:
Blow-up of positive solutions of semilinear heat equations. 
Indiana Univ. Math. J. 34 (1985), 425--447
%
\bibitem{Fuj} 
Y. Fujishima:
Blow-up set for a superlinear heat equation and pointedness of the initial data. 
Discrete Contin. Dyn. Syst. 34 (2014), 4617--4645
%
\bibitem{FHIL} 
Y. Fujishima, K. Hisa, K. Ishige and R. Laister:
Solvability of superlinear fractional parabolic equations. 
J. Evol. Equ. 23 (2023), Paper No. 4, 38 p.
%
\bibitem{FI18} 
Y. Fujishima and N. Ioku:
Existence and nonexistence of solutions for the heat equation with a superlinear source term. 
J. Math. Pures Appl. (9) 118 (2018), 128--158
%
\bibitem{FI22} 
Y. Fujishima and N. Ioku:
Global in time solvability for a semilinear heat equation without the self-similar structure. 
Partial Differ. Equ. Appl. 3 (2022), Paper No. 23, 32 p.
%
\bibitem{FIM17}
Y. Fujishima, K. Ishige and H. Maekawa:
Blow-up set of type I blowing up solutions for nonlinear parabolic systems.
Math. Ann. 369 (2017), 1491--1525
%
\bibitem{GS81}
B. Gidas and J. Spruck:
Global and local behavior of positive solutions of nonlinear elliptic equations.
Comm. Pure Appl. Math. 34 (1981), 525--598 
%
\bibitem{GK87} 
Y. Giga and R. Kohn:
Characterizing blowup using similarity variables.
Indiana Univ. Math. J. 36 (1987), 1--40
%
\bibitem{GMS04}
Y. Giga, S. Matsui and S. Sasayama:
Blow up rate for semilinear heat equation with subcritical nonlinearity.
Indiana Univ. Math. J. 53 (2004), 483--514
%
\bibitem{HZ22}
M.A. Hamza and H. Zaag: 
The blow-up rate for a non-scaling invariant semilinear heat equation.
Arch. Rational Mech. Anal. 244 (2022), 87--125
%
\bibitem{I12} 
I. Ianni: 
Local and global solutions for some parabolic nonlocal problems.
Nonlinear Anal. 75 (2012), 4904--4913
%
\bibitem{JWY23}
A. Jevnikar, J. Wang, W. Yang:
Liouville type theorems and periodic solutions for the nonhomogeneous parabolic systems.
Z. Angew. Math. Phys. 74 (2023), 145
%
\bibitem{LRS13} 
R. Laister, J.C. Robinson and M. Sierzega:
Non-existence of local solutions for semilinear heat equations of Osgood type. 
J. Differ. Equations 255 (2013), 3020--3028
%
\bibitem{LS20} 
R. Laister and M. Sierzega:
Well-posedness of semilinear heat equations in $L^1$. 
Ann. Inst. Henri Poincar\'e, Anal. Non Lin\'eaire 37 (2020), 709--725
%
\bibitem{LZ03}
Y. Li and L. Zhang:  
Liouville-type theorems and Harnack-type inequalities
for semilinear elliptic equations.
J. Anal. Math. 90 (2003), 27--87
%
\bibitem{N15} 
V.T. Nguyen: 
On the blow-up results for a class of strongly perturbed semilinear heat equations.
Discrete Continuous Dynamical Systems A 35 (2015), 3585--3626
%
\bibitem{P20} 
P. Pol\'a\v cik: 
Entire solutions and a Liouville theorem for a class of parabolic equations on the real line.
Proc. Amer. Math. Soc. 148 (2020), 2997--3008
%
\bibitem{PQS07a}
P. Pol\'a\v cik, P. Quittner and Ph. Souplet:
Singularity and decay estimates in superlinear problems
via Liouville-type theorems.  Part I: elliptic equations and systems.
Duke Math. J. 139 (2007), 555--579 
%
\bibitem{PQS07b}
P. Pol\'a\v cik, P. Quittner and Ph. Souplet:
Singularity and decay estimates in superlinear problems
via Liouville-type theorems.  Part II: parabolic equations.
Indiana Univ. Math. J. 56 (2007), 879--908
%
\bibitem{Q03} 
P. Quittner: 
Continuity of the blow-up time and a~priori bounds for solutions in superlinear parabolic problems.
Houston J. Math. 29 (2003), 757--799
%
\bibitem{Q16}
P. Quittner:
Liouville theorems for scaling invariant superlinear
parabolic problems with gradient structure.
Math. Ann. 364 (2016), 269--292 
%
\bibitem{Q16a}
P. Quittner:
Liouville theorems, universal estimates and periodic solutions 
for cooperative parabolic Lotka-Volterra systems.
J. Differ. Equations 260 (2016), 3524--3537
%
\bibitem{Q20}
P. Quittner:
Liouville theorems for superlinear parabolic problems with gradient structure. 
J. Elliptic Parabolic Equations 6 (2020), 145--153 
%
\bibitem{Q21}
P. Quittner:
Optimal Liouville theorems for superlinear parabolic problems.
Duke Math. J. 170 (2021), 1113--1136
%
\bibitem{Q22}
P. Quittner:
Liouville theorems for parabolic systems with homogeneous nonlinearities 
and gradient structure. 
Partial Differ. Equ. Appl. 3 (2022), Paper No. 26, 16 p. 
%
\bibitem{QS19}
P. Quittner and Ph. Souplet:
Superlinear parabolic problems. Blow-up, global existence and steady states.
Birkh\"auser Advanced Texts, Birkh\"auser, Basel, 2nd edition (2019)
%
\bibitem{QS24}
P. Quittner and Ph. Souplet:
Liouville theorems and universal estimates for superlinear elliptic
problems without scale invariance. Preprint arXiv:2407.04154
%
\bibitem{S23}
Ph.~Souplet:
Universal estimates and Liouville theorems for superlinear problems without scale invariance.
Discrete Contin. Dyn. Syst. 43 (2023), 1702--1734
%
\bibitem{Z21} 
E. Zhanpeisov: 
Blow-up rate of sign-changing solutions to nonlinear parabolic systems.
Adv. Differential Equations 26 (2021), 563--584
%
\end{thebibliography}
\end{document}